\newtheorem{corollary}{Corollary}
\newtheorem{lemma}{Lemma}
\newtheorem{proposition}{Proposition}
\newtheorem{theorem}{Theorem}
\theoremstyle{definition}
\newtheorem{definition}{Definition}
\theoremstyle{remark}
\newtheorem{remark}{Remark}
\DeclareMathOperator{\ad}{ad}
\DeclareMathOperator{\Aut}{Aut}
\DeclareMathOperator{\Der}{Der}
\DeclareMathOperator{\End}{End}
\DeclareMathOperator{\Hom}{Hom}
\DeclareMathOperator{\Inn}{InnDer}
\begin{document}

\title[The hom-associative Weyl algebras]{The hom-associative Weyl algebras in prime characteristic}
\author{Per B\"ack}
\address[Per B\"ack]{Division of Mathematics and Physics, The School of Education, Culture and Communication, M\"alar\-dalen  University,  Box  883,  SE-721  23  V\"aster\r{a}s, Sweden}
\email[corresponding author]{per.back@mdh.se}

\author{Johan Richter}
\address[Johan Richter]{Department of Mathematics and Natural Sciences, Blekinge Institute of Technology, SE-371 79 Karlskrona, Sweden}
\email{johan.richter@bth.se}

\subjclass[2020]{17B61, 17D30}
\keywords{hom-associative Ore extensions, hom-associative Weyl algebras, formal multi-parameter hom-associative deformations, formal multi-parameter hom-Lie deformations.}

\begin{abstract}
We introduce the first hom-associative Weyl algebras over a field of prime characteristic as a generalization of the first associative Weyl algebra in prime characteristic. First, we study properties of hom-associative algebras constructed from associative algebras by a general ``twisting'' procedure. Then, with the help of these results, we determine the commuter, center, nuclei, and set of derivations of the first hom-associative Weyl algebras. We also classify them up to isomorphism, and show, among other things, that all nonzero endomorphisms on them are injective, but not surjective. Last, we show that they can be described as a multi-parameter formal hom-associative deformation of the first associative Weyl algebra, and that this deformation induces a multi-parameter formal hom-Lie deformation of the corresponding Lie algebra, when using the commutator as bracket.
\end{abstract}

\maketitle

\section{Introduction}

\emph{Hom-associative algebras} originate with the introduction of \emph{hom-Lie algebras}, the latter a class of algebras defined by Hartwig, Larsson, and Silvestrov~\cite{HLS03} to describe deformations of Lie algebras obeying a generalized Jacobi identity twisted by a \emph{hom}omorphism; hence the name. Hom-associative algebras, introduced by Makhlouf and Silvestrov~\cite{MS08}, play the same role as associative algebras do for Lie algebras; equipping a hom-associative algebra with the commutator as bracket give rise to a hom-Lie algebra. In a hom-associative algebra, the associativity condition is twisted by a homomorphism, similarly to the twisting of the Jacobi identity in hom-Lie algebras. By taking the homomorphism to be the identity map, one gets the class of associative algebras as a subclass of the hom-associative algebras. By taking the homomorphism to be the zero map, the twisted associativity condition becomes null, and hence non-associative algebras also constitute a subclass of hom-associative algebras.

The first associative Weyl algebra may be seen as an \emph{Ore extension}, or a \emph{non-commutative polynomial ring} as Ore extensions were first named by Ore when he introduced them~\cite{Ore33}. \emph{Non-associative Ore extensions} were later introduced by Nystedt, \"Oinert, and Richter in the unital case~\cite{NOR18}, and then generalized to the non-unital, hom-as\-so\-cia\-tive setting by Silvestrov and the authors~\cite{BRS18}. The authors further developed this theory in~\cite{BR18}, introducing a Hilbert's basis theorem for unital, non-associative and hom-associative Ore extensions.

In \cite{BRS18}, Silvestrov and the authors introduced the first hom-associative Weyl algebras over a field of characteristic zero, along with hom-associative versions of the quantum plane and the universal enveloping algebra of the two-dimensional non-abelian Lie algebra. In \cite{BR20} the authors studied the first hom-associative Weyl algebras in characteristic zero. They were shown to be a formal deformation of the associative Weyl algebra, and properties such as their center and their set of derivations were studied. The authors also proved a hom-associative analogue of the Dixmier conjecture to be true, which states that all nonzero endomorphisms on the first Weyl algebra in characteristic zero are automorphisms. In the characteristic zero case, all the first hom-associative Weyl algebras over the same field are isomorphic, except the associative one. 

In this paper, we introduce and study the first hom-associative Weyl algebras over a field of prime characteristic. Unlike the case of zero characteristic, there are many non-isomorphic ways of deforming the associative Weyl algebra to obtain hom-associative Weyl algebras. We classify the hom-associative Weyl algebras up to isomorphism, describe their commuter, center, and nuclei, as well as characterize the derivations on them. We also show how they can be described as a multi-parameter formal deformation of the associative Weyl algebra, among other things. 

This paper is organized as follows:

In \autoref{sec:preliminaries}, we gather preliminaries on non-associative algebras (\autoref{subsec:prel-non-assoc}), hom-associative algebras and hom-Lie algebras (\autoref{subsec:hom}), non-unital, hom-associative Ore extensions (\autoref{subsec:homOre}), and also on the first associative Weyl algebra (\autoref{subsec:weyl-algebra}) and on modular arithmetic (\autoref{subsec:modular-arithmetic}).

In \autoref{sec:yau-twist}, we introduce some lemmas on a particular construction of hom-associative algebras, called the Yau twist. These results are then used in the succeeding section to prove properties of the hom-associative Weyl algebras.

In \autoref{sec:morph-assoc-comm-der}, we define the hom-asso\-cia\-tive Weyl algebras and study their basic properties: we describe the possible homomorphisms that can twist the associativity condition and hence give rise to hom-associative Weyl algebras (\autoref{lem:alpha-sufficient-necessary}), we show that the hom-associative Weyl algebras contain no zero divisors (\autoref{cor:Weyl-algebra-zero-divisors}), that the commuter and the property of not being simple is the same as for the associative Weyl algebra (\autoref{cor:commuter} and \autoref{prop:non-simple}), and that the latter is the only hom-associative Weyl algebra which is power associative (\autoref{prop:power-assoc}). We also describe their nuclei (\autoref{prop:right-nucleus}), center (\autoref{cor:center}), and set of derivations (\autoref{prop:der-case-1} and \autoref{prop:der-case-2}), and moreover, show that every nonzero endomorphism is injective (\autoref{prop:hom-injective}), but that there are nonzero endomorphisms which are not surjective (\autoref{prop:hom-non-Dixmier}). Last, we classify all hom-associative Weyl algebras up to isomorphism (\autoref{prop:k-zero-isomorphism} and \autoref{prop:iso-necessary-sufficient}), and in many cases also describe all the possible isomorphisms (\autoref{prop:iso-necessary-sufficient}). 

In \autoref{sec:formal-deformation}, we first introduce the notions of multi-parameter formal hom-associative deformations (\autoref{def:multi-param-hom-assoc}) and multi-parameter formal hom-Lie deformations (\autoref{def:multi-param-hom-Lie}). We then show that the hom-associative Weyl algebras are a multi-parameter formal hom-associative deformation of the first associative Weyl algebra (\autoref{prop:hom-assoc-deform}), inducing a multi-parameter formal hom-Lie deformation of the corresponding Lie algebra, when using the commutator as bracket (\autoref{prop:hom-lie-deform}). 

\section{Preliminaries}\label{sec:preliminaries}
We denote by $\mathbb{N}$ the set of nonnegative integers and by $\mathbb{N}_{>0}$ the set of positive integers. $\mathbb{F}_{p^n}$ is the finite field of characteristic $p$ and cardinality $p^n$ for some prime $p$ and $n\in\mathbb{N}_{>0}$. If $K$ is a field, $K^\times$ is the multiplicative group of nonzero elements of $K$.
\subsection{Non-associative algebras}\label{subsec:prel-non-assoc} By a \emph{non-as\-so\-cia\-tive algebra} $A$ over an associative, commutative, and unital ring $R$, we mean an $R$-algebra which is not necessarily associative. Furthermore, $A$ is called \emph{unital} if there exists an element $1_A\in A$ such that for all $a\in A$, $a\cdot 1_A=1_A\cdot a=a$, and \emph{non-unital} if there does not necessarily exist such an element. For a non-unital, non-associative algebra $A$, we denote by $D_l(A)$ the set of left zero divisors of $A$, and by $D_r(A)$ the set of right zero divisors of $A$. The \emph{commutator} $[\cdot,\cdot]\colon A\times A\to A$ is defined by $[a,b]:=a\cdot b-b\cdot a$ for arbitrary $a,b\in A$, and the \emph{commuter} of $A$, written $C(A)$, as the set $C(A):=\{a\in A\colon [a,b]=0,\ b\in A\}$. The \emph{associator} $(\cdot,\cdot,\cdot)\colon A\times A\times A\to A$ is defined by $(a,b,c)=(a\cdot b)\cdot c-a\cdot (b\cdot c)$ for arbitrary elements $a,b,c\in A$, and the \emph{left}, \emph{middle}, and \emph{right nuclei} of $A$, denoted by $N_l(A), N_m(A),$ and $N_r(A)$, respectively, as the sets $N_l(A):=\{a\in A\colon (a,b,c)=0,\ b,c\in A\}$, $N_m(A):=\{b\in A\colon (a,b,c)=0,\ a,c\in A\}$, and $N_r(A):=\{c\in A\colon (a,b,c)=0,\ a,b\in A\}$. The \emph{nucleus} of $A$, written $N(A)$, is defined as the set $N(A):=N_l(A)\cap N_m(A)\cap N_r(A)$. The \emph{center} of $A$, denoted by $Z(A)$, is the intersection of the commuter and the nucleus, $Z(A):=C(A)\cap N(A)$. If the only two-sided ideals in $A$ are the zero ideal and $A$ itself, $A$ is called \emph{simple}. We can measure the non-associativity of $A$ by using the associator: $A$ is called \emph{power associative} if $(a,a,a)=0$, \emph{left alternative} if $(a,a,b)=0$, \emph{right alternative} if $(b,a,a)=0$, \emph{flexible} if $(a,b,a)=0$, and \emph{associative} if $(a,b,c)=0$ for all $a,b,c\in A$. Hence, if $A$ is not power associative, then $A$ is not left alternative, right alternative, flexible, or associative. An $R$-linear map $\delta\colon A\to A$ is called a \emph{derivation} if for any $a,b\in A$, $\delta(a\cdot b)=\delta(a)\cdot b+a\cdot\delta(b)$, and the set of  derivations of $A$ is denoted by $\Der_R(A)$. If $A$ is associative, then all maps of the form $\ad_a:=[a,\cdot]\colon A\to A$ for an arbitrary $a\in A$ are derivations, called \emph{inner derivations}, and the set of all inner derivations of $A$ are denoted by $\Inn_R(A)$. If $A$ is not associative, such a map need not be a derivation, however. Last, recall that $A$ \emph{embeds} into a non-unital, non-associative algebra $B$ if there is an injective homomorphism from $A$ to $B$, so that $A$ may be seen as a subalgebra of $B$.

\subsection{Hom-associative algebras and hom-Lie algebras}\label{subsec:hom}In this section, we recall some basic definitions and results concerning hom-associative algebras and hom-Lie algebras. Hom-associative algebras were first introduced in \cite{MS08}, and hom-Lie algebras in \cite{HLS03}, by starting from vector spaces. Here, we take the slightly more general approach used in e.g. \cite{BR18,BRS18,BR20}, replacing vector spaces by modules. As it turns out, most of the basic theory still hold in this latter, more general case. Now, let us state what we mean by a hom-associative algebra. 

\begin{definition}[Hom-associative algebra]
A \emph{hom-associative algebra} over an associative, commutative, and unital ring $R$, is a triple $(M,\cdot,\alpha)$, consisting of an $R$-module $M$, a binary operation $\cdot\colon M\times M \to M$ linear over $R$ in both arguments, and an $R$-linear map $\alpha\colon M\to M$, satisfying, for all $a,b,c\in M$, $\alpha(a)\cdot(b\cdot c)=(a\cdot b)\cdot \alpha(c)$. 	
\end{definition}

In the above definition, $\alpha$ is in a sense twisting the usual associativity condition, and hence it is referred to as a \emph{twisting map}. A hom-associative algebra is called \emph{multiplicative} if the twisting map is multiplicative, i.e. if it is an $R$-algebra homomorphism.  

\begin{remark}
A hom-associative algebra $(M,\cdot,\alpha)$ over an associative, commutative, and unital ring $R$, is in particular a non-unital, non-associative $R$-algebra, and if  $\alpha=\mathrm{id}_M$, a non-unital, associative  $R$-algebra. If $\alpha=0_M$, the hom-associativity condition becomes null, and hence hom-associative algebras can actually be considered as generalizations of both associative and non-associative algebras.	
\end{remark}

\begin{definition}[Hom-associative ring]
A \emph{hom-associative ring} is a hom-associative algebra over the integers.	
\end{definition}

\begin{definition}[Weakly unital hom-associative algebra]Let $A$ be a hom-associative algebra. If for all $a\in A$, $e_l\cdot a=\alpha(a)$ for some $e_l\in A$, we say that $A$ is \emph{weakly left unital} with \emph{weak left identity} $e_l$. In case $a\cdot e_r=\alpha(a)$ for some $e_r\in A$, $A$ is called \emph{weakly right unital} with \emph{weak right identity} $e_r$. If there is an $e\in A$ which is both a weak left and a weak right identity, $e$ is called a \emph{weak identity}, and $A$ \emph{weakly unital}.
\end{definition}

\begin{remark}The notion of a weak identity can thus be seen as a weakening of that of an identity. A weak identity, when it exists, need not be unique. The term weak unit has been used in the litterature for the same concept. 
\end{remark}

\begin{definition}[Homomorphism]
Let $R$ be an associative, commutative, and unital ring, and let $A$ and $B$ be two hom-associative $R$-algebras with twisting maps $\alpha$ and $\beta$, respectively. A \emph{homomorphism} from $A$ to $B$ is an $R$-linear and multiplicative map $f\colon A\to B$, satisfying  $f\circ\alpha=\beta\circ f$. If $A=B$, then $f$ is an \emph{endomorphism}.
\end{definition}

For any two hom-associative $R$-algebras $A$ and $B$, we denote by $\Hom_R(A,B)$ the set of homomorphisms from $A$ to $B$, and put $\End_R(A):=\Hom_R(A,A)$ for the set of endomorphisms. For convenience, we also introduce $C_{\End_R(A)}(\alpha,\beta):=\{f\in\End_R(A)\colon f\circ\alpha=\beta\circ f\}$, $C_{\End_R(A)}(\alpha):=C_{\End_R(A)}(\alpha,\alpha)$, and  $C_{\Der_R(A)}(\alpha):=\{\delta\in\Der_R(A)\colon \delta\circ\alpha=\alpha\circ\delta\}$. In case $A$ is associative, we view $A$ as a hom-associative algebra with twisting map $\mathrm{id}_A$. Hence $\Hom_R(A,B)$ and $\End_R(A)$ where $A$ and $B$ are associative coincide with the usual notation for homomorphisms and endomorphisms of associative algebras, respectively.

\begin{proposition}[\cite{Yau09}]\label{prop:star-alpha-mult} Let $R$ be an associative, commutative, and unital ring, $A$ an associative $R$-algebra, and $\alpha\in\End_R(A)$. Define a product $*\colon A\times A\to A$ by $a* b:=\alpha(a\cdot b)$ for all $a,b\in A$. Then $A^\alpha:=(A,*,\alpha)$ is a hom-associative $R$-algebra.
\end{proposition}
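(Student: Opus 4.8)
The plan is to verify the single defining axiom of a hom-associative algebra directly from the definitions, since $A^\alpha$ is already an $R$-module (namely $A$ itself), the new product $*$ is manifestly $R$-bilinear (being the composite of the $R$-bilinear product $\cdot$ with the $R$-linear map $\alpha$), and $\alpha$ is $R$-linear by hypothesis. Thus the only thing left to check is the hom-associativity condition $\alpha(a)*(b*c)=(a*b)*\alpha(c)$ for all $a,b,c\in A$.

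\begin{proof}
Since $A$ is an $R$-module and $\alpha\colon A\to A$ is $R$-linear by assumption, it remains only to check that $*$ is $R$-bilinear and that the hom-associativity condition holds. For $R$-bilinearity, note that for any $a,b\in A$ and $r\in R$ we have $(ra)*b=\alpha((ra)\cdot b)=\alpha(r(a\cdot b))=r\,\alpha(a\cdot b)=r(a*b)$, using that $\cdot$ is $R$-bilinear and $\alpha$ is $R$-linear; additivity and linearity in the second argument follow in the same way. Hence $*$ is $R$-linear in both arguments.

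It remains to verify hom-associativity. Let $a,b,c\in A$ be arbitrary. Using the definition of $*$, the multiplicativity of $\alpha$, and the associativity of $\cdot$, we compute
\begin{align*}
\alpha(a)*(b*c)&=\alpha\bigl(\alpha(a)\cdot\alpha(b\cdot c)\bigr)=\alpha\bigl(\alpha\bigl(a\cdot(b\cdot c)\bigr)\bigr)\\
&=\alpha\bigl(\alpha\bigl((a\cdot b)\cdot c\bigr)\bigr)=\alpha\bigl(\alpha(a\cdot b)\cdot\alpha(c)\bigr)=(a*b)*\alpha(c).
\end{align*}
Therefore $A^\alpha=(A,*,\alpha)$ satisfies the hom-associativity condition for all $a,b,c\in A$, and so it is a hom-associative $R$-algebra.
\end{proof}

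The computation is entirely routine; the only point requiring care is that it uses the multiplicativity of $\alpha$ (that is, $\alpha(x\cdot y)=\alpha(x)\cdot\alpha(y)$) when passing $\alpha$ across the product $\cdot$, which holds because $\alpha\in\End_R(A)$ is an $R$-algebra endomorphism. I do not anticipate any genuine obstacle here, as every step reduces to the associativity of $\cdot$ together with the algebra-homomorphism property of $\alpha$; the main thing to be careful about is keeping track of where $\alpha$ is applied once versus twice when unwinding the definition of $*$.
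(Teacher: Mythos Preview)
Your proof is correct; the verification of $R$-bilinearity and the hom-associativity identity is exactly the standard one, using associativity of $\cdot$ together with multiplicativity and $R$-linearity of $\alpha$. Note that the paper does not actually supply a proof of this proposition: it is stated with a citation to Yau's original paper and used as a black box, so there is nothing to compare against beyond observing that your argument is the expected direct verification.
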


The construction of $A^\alpha$ from $A$ in \autoref{prop:star-alpha-mult} above was introduced by Yau in \cite{Yau09}, and is often referred to as the \emph{Yau twist of $A$}. We will use the subscript $*$ whenever the multiplication is that in $A^\alpha$; hence $[\cdot,\cdot]_*$ denotes the commutator in $A^\alpha$ and $(\cdot,\cdot,\cdot)_*$ denotes the associator in $A^\alpha$.

\begin{corollary}[\cite{FG09}]\label{lem:fregier-and-gohr}
Let $R$ be an associative, commutative, and unital ring, $A$ a unital, associative $R$-algebra with identity $1_A$, and $\alpha\in\End_R(A)$. Then $A^\alpha$ is weakly unital with weak identity $1_A$.	
\end{corollary}	

\begin{definition}[Hom-Lie algebra]\label{def:hom-lie} A \emph{hom-Lie algebra} over an associative, commutative, and unital ring $R$, is a triple $(M, [\cdot,\cdot], \alpha)$, consisting of an $R$-module $M$, a binary operation $[\cdot,\cdot]\colon M\times M\to M$ linear over $R$ in both arguments, and an $R$-linear map $\alpha\colon M\to M$, satisfying, for all $a,b,c\in M$, the following two axioms:
\begin{align*}
[a,a]&=0,\quad\text{(alternativity)},\\
\left[\alpha(a),[b,c]\right]+\left[\alpha(c),[a,b]\right]+\left[\alpha(b),[c,a]\right]&=0,\quad\text{(hom-Jacobi identity)}.\label{eq:hom-jacobi}
\end{align*}
\end{definition}
In the above definition, $[\cdot,\cdot]$ is called a \emph{hom-Lie bracket}, and just like in the definition of hom-associative algebras, $\alpha$ is called a twisting map. As in the case of Lie algebra, we immediately get anti-commutativity of the bracket from the bilinearity and alternativity:  $0=[a+b,a+b]=[a,a]+[a,b]+[b,a]+[b,b]=[a,b]+[b,a]$, so $[a,b]=-[b,a]$ holds for all $a$ and $b$ in a hom-Lie algebra as well. Unless $R$ has characteristic two, anti-commutativity also implies alternativity, since $[a,a]=-[a,a]$ for all $a$.

\begin{remark}If $\alpha=\mathrm{id}_M$ in \autoref{def:hom-lie}, we get the definition of a Lie algebra. Hence the notion of a hom-Lie algebra can be seen as a generalization of that of a Lie algebra.
\end{remark}

\begin{proposition}[\cite{MS08}]\label{prop:hom-assoc-commutator} Let $(M,\cdot,\alpha)$ be a hom-associative algebra over an associative, commutative, and unital ring $R$, with commutator $[\cdot,\cdot]$. Then $(M,[\cdot,\cdot],\alpha)$ is a hom-Lie algebra over $R$.
\end{proposition}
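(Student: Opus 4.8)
The plan is to verify directly that the triple $(M,[\cdot,\cdot],\alpha)$ satisfies the two defining axioms of a hom-Lie algebra from \autoref{def:hom-lie}, namely alternativity of the bracket and the hom-Jacobi identity. The bilinearity of $[\cdot,\cdot]$ over $R$ is inherited immediately from the bilinearity of the product $\cdot$, so $[\cdot,\cdot]\colon M\times M\to M$ is a well-defined $R$-bilinear map, and $\alpha$ is $R$-linear by hypothesis. Alternativity is then immediate, since $[a,a]=a\cdot a-a\cdot a=0$ for every $a\in M$.

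The substance of the proof is the hom-Jacobi identity. First I would expand each of the three cyclic terms $[\alpha(a),[b,c]]$, $[\alpha(c),[a,b]]$, and $[\alpha(b),[c,a]]$ using the definition of the commutator. This yields a sum of twelve products of three elements, four coming from each bracket, every one of them of the form $\alpha(x)\cdot(y\cdot z)$ or $(y\cdot z)\cdot\alpha(x)$. The key step is to rewrite each of these twelve terms using the hom-associativity axiom $\alpha(x)\cdot(y\cdot z)=(x\cdot y)\cdot\alpha(z)$, applied left-to-right on the terms carrying $\alpha$ on the left and right-to-left on those carrying $\alpha$ on the right, so that the products reorganize into six pairs that cancel. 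For instance, $\alpha(a)\cdot(b\cdot c)=(a\cdot b)\cdot\alpha(c)$ cancels against the term $-(a\cdot b)\cdot\alpha(c)$ arising from $[\alpha(c),[a,b]]$, and the remaining five pairings follow analogously from the cyclic symmetry.

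The main obstacle here is nothing conceptual but purely the bookkeeping: one must apply the hom-associativity axiom in the correct direction to each of the twelve terms so that every product meets its cancelling partner. I expect to streamline this by bringing all twelve products into a single canonical normal form, say writing each as a left-parenthesized product $(\,\cdot\,\cdot\,)\cdot\alpha(\cdot)$, after which the cyclic symmetry of the three starting brackets produces each canonical product exactly twice and with opposite signs. Summing the three cyclic terms then gives zero, which establishes the hom-Jacobi identity and completes the verification that $(M,[\cdot,\cdot],\alpha)$ is a hom-Lie algebra.
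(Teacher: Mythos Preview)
Your approach is correct: bilinearity and alternativity are immediate, and your plan for the hom-Jacobi identity---expanding the twelve terms, converting each $\alpha(x)\cdot(y\cdot z)$ into $(x\cdot y)\cdot\alpha(z)$ via hom-associativity, and observing that the resulting six left-parenthesized products each occur twice with opposite signs---is exactly the standard computation and goes through without issue. Note, however, that the paper does not supply its own proof of this proposition; it is quoted from \cite{MS08} and stated without argument, so there is no in-paper proof to compare against. Your direct verification is essentially the argument given in that original reference.
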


Note that when $\alpha$ is the identity map in the above proposition, we recover the classical construction of a Lie algebra from an associative algebra. 

\subsection{Non-unital, hom-associative Ore extensions}\label{subsec:homOre}
In this section, we give some preliminaries from the theory of non-unital, hom-associative Ore extensions, as introduced in~\cite{BRS18}. First, if $R$ is a  non-unital, non-associative ring, a map $\beta\colon R\to R$ is called \emph{left $R$-additive} if for all $r,s,t\in R$, $r\cdot\beta(s+t)=r\cdot \beta(s)+r\cdot\beta(t)$. If given two left $R$-additive maps $\delta$ and $\sigma$ on a non-unital, non-associative ring $R$, as a set, a \emph{non-unital, non-associative Ore extension} of $R$, written $R[x;\sigma,\delta]$, consists of all formal sums $\sum_{i\in\mathbb{N}}r_ix^i$ where finitely many $r_i\in R$ are nonzero. $R[x;\sigma,\delta]$ is then equipped with the same addition as a commutative polynomial ring,
\begin{equation}
\sum_{i\in\mathbb{N}}r_ix^i+\sum_{i\in\mathbb{N}}s_ix^i=\sum_{i\in\mathbb{N}}(r_i+s_i)x^i,\quad r_i,s_i\in R,
\end{equation}
but with the following multiplication, first defined on \emph{monomials} $rx^m$ and $sx^n$ where $m,n\in\mathbb{N}$:
\begin{equation}
rx^m\cdot sx^n=\sum_{i\in\mathbb{N}}\left(r\cdot\pi_i^m(s\right))x^{i+n},\label{eq:ore-mult}
\end{equation}
and then extended to arbitrary \emph{polynomials} $\sum_{i\in\mathbb{N}}r_ix^i$ in $R[x;\sigma,\delta]$ by imposing distributivity. The functions $\pi_i^m\colon R\to R$ are defined as the sum of all $\binom{m}{i}$ compositions of $i$ instances of $\sigma$ and $m-i$ instances of $\delta$, so $\pi_2^3=\sigma\circ\sigma\circ\delta+\sigma\circ\delta\circ\sigma+\delta\circ\sigma\circ\sigma$ and $\pi_0^0=\mathrm{id}_R$. Whenever $i<0$, or $i>m$, we put $\pi_i^m\equiv 0$. That this really gives an extension of the ring $R$, as suggested by the name, can now be seen by the fact that $rx^0\cdot sx^0=\sum_{i\in\mathbb{N}}(r\cdot \pi_i^0(s))x^{i+0}=(r\cdot\pi_0^0(s))x^0=(r\cdot s)x^0$, and similarly $rx^0+sx^0=(r+s)x^0$ for any $r,s\in R$. Hence the isomorphism $r\mapsto rx^0$ embeds $R$ into $R[x;\sigma,\delta]$. In this paper, we shall only be concerned with the case $\sigma=\mathrm{id}_R$, however, in which case \eqref{eq:ore-mult} simplifies to
\begin{equation}
rx^m\cdot sx^n=\sum_{i\in\mathbb{N}}\binom{m}{i}\left(r\cdot\delta^{m-i}(s)\right)x^{i+n}.
\end{equation}
Starting with a non-unital, non-associative ring $R$ equipped with two left $R$-additive maps $\delta$ and $\sigma$ and an additive map $\alpha\colon R\to R$, we \emph{extend $\alpha$ homogeneously} to $R[x;\sigma, \delta]$ by putting $\alpha(rx^m)=\alpha(r)x^m$ for all $rx^m\in R[x;\sigma,\delta]$, imposing additivity. If $\alpha$ is further assumed to be multiplicative and to commute with $\delta$ and $\sigma$, we can turn a non-unital (unital), associative Ore extension into a non-unital (weakly unital), hom-associative Ore extension by using this extension, as the following proposition demonstrates:

\begin{proposition}[\cite{BRS18}]\label{prop:hom*ore} Let $S:=R[x;\sigma,\delta]$ be a non-unital, associative Ore extension of a non-unital, associative ring $R$, and $\alpha\colon R\to R$ a ring endomorphism that commutes with $\delta$ and $\sigma$, and extend $\alpha$ homogeneously to $S$. Then $S^\alpha$ is a multiplicative, non-unital, hom-associative Ore extension.
\end{proposition}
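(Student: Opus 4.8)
The plan is to recognise $S^\alpha$ as a Yau twist of $S$ and reduce the entire statement to \autoref{prop:star-alpha-mult}; the only genuine work is to check that the homogeneous extension of $\alpha$ is a ring endomorphism of the whole Ore extension $S$, and not merely of the coefficient ring $R$. Once that is in place, hom-associativity, multiplicativity of the twisting map, and the Ore-extension shape of the product all follow formally, with $\mathbb{Z}$ playing the role of the base ring so that $S^\alpha$ is a hom-associative ring in the sense of the definition above.

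First I would verify that $\alpha\colon S\to S$, $\alpha(rx^m)=\alpha(r)x^m$, is a ring endomorphism. Additivity is immediate from the homogeneous extension, so by distributivity it suffices to prove multiplicativity on monomials $rx^m$ and $sx^n$. The hypotheses on $\alpha$ enter precisely here: since each $\pi_i^m$ is, by definition, a sum of all $\binom{m}{i}$ compositions of $i$ copies of $\sigma$ and $m-i$ copies of $\delta$, and $\alpha$ commutes with both $\sigma$ and $\delta$, a short induction on $m$ (or direct inspection of each composite) yields $\alpha\circ\pi_i^m=\pi_i^m\circ\alpha$ for all $0\le i\le m$. Combining this with the multiplicativity of $\alpha$ on $R$ and using \eqref{eq:ore-mult} gives
\[
\alpha(rx^m\cdot sx^n)=\sum_{i\in\mathbb{N}}\alpha\bigl(r\cdot\pi_i^m(s)\bigr)x^{i+n}=\sum_{i\in\mathbb{N}}\bigl(\alpha(r)\cdot\pi_i^m(\alpha(s))\bigr)x^{i+n}=\alpha(rx^m)\cdot\alpha(sx^n),
\]
so $\alpha$ is indeed a ring endomorphism of $S$. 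This commutation $\alpha\circ\pi_i^m=\pi_i^m\circ\alpha$ is the one computational point of the proof and the main obstacle: it is exactly where the assumptions that $\alpha$ commute with both $\sigma$ and $\delta$ and be multiplicative are consumed, and it is worth isolating it, since everything afterwards is automatic.

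Having established this, \autoref{prop:star-alpha-mult} applies directly and shows that $S^\alpha=(S,*,\alpha)$, with $f*g=\alpha(f\cdot g)$, is a hom-associative ring. Multiplicativity of the twisting map is then automatic for any Yau twist by a ring endomorphism, since for all $f,g\in S$
\[
\alpha(f*g)=\alpha\bigl(\alpha(f\cdot g)\bigr)=\alpha\bigl(\alpha(f)\cdot\alpha(g)\bigr)=\alpha(f)*\alpha(g),
\]
using that $\alpha$ respects $\cdot$. Finally I would record that the twisted product of two monomials, $rx^m*sx^n=\sum_{i\in\mathbb{N}}\alpha(r\cdot\pi_i^m(s))x^{i+n}$, is again supported on the monomials $x^{i+n}$ and is computed on arbitrary polynomials by distributivity, so $S^\alpha$ has exactly the shape of a non-unital, hom-associative Ore extension of $R$; non-unitality is simply inherited from $S$, which carries no requirement of an identity.
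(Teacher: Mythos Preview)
The paper does not supply a proof of this proposition; it is quoted as a result from \cite{BRS18}. Your argument is correct and is precisely the natural one: the only substantive step is that $\alpha\circ\pi_i^m=\pi_i^m\circ\alpha$ (immediate from $\alpha$ commuting with $\sigma$ and $\delta$), which together with multiplicativity of $\alpha$ on $R$ makes the homogeneous extension a ring endomorphism of $S$; the rest follows from \autoref{prop:star-alpha-mult}.
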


\begin{remark}\label{re:weak-unit}If $S$ in \autoref{prop:hom*ore} is unital with identity $1_S$, then by by \autoref{lem:fregier-and-gohr}, $S^\alpha$ is weakly unital with weak identity $1_S$ .
\end{remark}

\subsection{The first associative Weyl algebra}\label{subsec:weyl-algebra}
Let $K$ be a field of prime characteristic $p$. The first Weyl algebra $A_1$ over $K$ is the free, unital, associative algebra on two letters $x$ and $y$, $K\langle x,y\rangle$, modulo the commutation relation $[x,y]:=x\cdot y - y\cdot x =1_{A_1}$. $A_1$ is a non-commutative domain, but it should be mentioned, however, that there is an alternative definition of $A_1$ as an algebra of differential operators, and as such, it is not a domain (see e.g. Chapter 2.3 in \cite{Cou95}). Throughout this paper, we will stick to the former definition of $A_1$, though. Revoy \cite{Rev73} has shown that $C(A_1)=K[x^p,y^p]$. Furthermore, $A_1$ is isomorphic to the unital, associative Ore extension $K[y][x;\mathrm{id}_{K[y]},\mathrm{d}/\mathrm{d}y]$ (see e.g. \cite{GW04} or \cite{MR01}, both also being nice introductions to the subject of associative Ore extensions), and as a vector space over $K$, it has a basis $\{y^mx^n\colon m,n\in\mathbb{N}\}$. As $A_1$ contains no zero divisors, any nonzero endomorphism $f$ on $A_1$ is unital, since $f(1_{A_1})=f(1_{A_1})\cdot f(1_{A_1})\iff f(1_{A_1})\cdot (1_{A_1}-f(1_{A_1}))=0\iff f(1_{A_1})=1_{A_1}$. Dixmier \cite{Dix68} first described the automorphism group $\Aut_K(A_1)$ of $A_1$ in characteristic zero, and later Makar-Limanov \cite{Mak84} generalized Dixmier's result to arbitrary characteristic, as follows:

\begin{theorem}[\cite{Mak84}]\label{thm:Mak84} $\Aut_K(A_1)$ is generated by \emph{linear automorphisms}, 
\begin{equation*}
x\mapsto ax+by,\quad y\mapsto cx+dy, \quad \begin{vmatrix}a&c\\b&d\end{vmatrix}=1,\quad a,b,c,d\in K,
\end{equation*}
and \emph{triangular automorphisms},
\begin{equation*}
x\mapsto x,\quad y\mapsto y+q(x),\quad q(x)\in K[x].
\end{equation*}
\end{theorem}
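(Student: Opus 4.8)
The plan is to reduce the determination of $\Aut_K(A_1)$ to a computation of automorphisms of the center, exploiting the fact that in prime characteristic the center $Z := C(A_1) = K[x^p,y^p]$ is large (Revoy's result recalled above). First I would record the structural facts that power the reduction: from the $K$-basis $\{y^m x^n\colon m,n\in\mathbb{N}\}$ together with the centrality of $x^p$ and $y^p$, one sees that $A_1$ is a free $Z$-module of rank $p^2$, with $Z$-basis $\{y^m x^n\colon 0\le m,n<p\}$, and in fact a $Z$-form of the matrix algebra $M_p(Z)$, i.e.\ an Azumaya algebra over $Z$. Identifying $Z$ with the coordinate ring $K[u,v]$ of the affine plane via $u=x^p$, $v=y^p$, every $\phi\in\Aut_K(A_1)$ restricts to an automorphism $\bar\phi$ of $Z$, yielding a group homomorphism $\rho\colon\Aut_K(A_1)\to\Aut_K(K[u,v])$.

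The next step is to pin down the image and kernel of $\rho$. For the image, I would equip $Z$ with the Poisson bracket obtained by lifting $A_1$ to a ring in which $p$ is not a zero divisor, taking the commutator of lifts of central elements, dividing by $p$, and reducing modulo $p$; a direct computation gives $\{u,v\}=\{x^p,y^p\}$ equal to a nonzero constant, so $(Z,\{-,-\})$ is the standard symplectic plane. Since $\rho$ is built from the commutator, each $\bar\phi$ preserves this bracket, forcing $\operatorname{Jac}(\bar\phi)=1$; hence $\im\rho$ lands in the subgroup $\operatorname{SAut}_K(K[u,v])$ of Jacobian-one (symplectic) automorphisms. For the kernel, an automorphism fixing $Z$ pointwise is a $Z$-algebra automorphism of the Azumaya algebra $A_1$; as $\operatorname{Pic}(Z)=0$ for the polynomial ring $Z=K[u,v]$, a Skolem--Noether argument shows every such automorphism is inner, and since $A_1$ is a domain its only units are the scalars $K^\times$, whose inner automorphisms are trivial. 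Thus $\rho$ is injective and restricts to an isomorphism $\Aut_K(A_1)\xrightarrow{\sim}\im\rho\subseteq\operatorname{SAut}_K(K[u,v])$.

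It then remains to identify $\im\rho$ and translate generators back to $A_1$. Here I would invoke the theorem of Jung and van der Kulk, that every automorphism of $K[u,v]$ is tame, in the form that $\operatorname{SAut}_K(K[u,v])$ is generated by the Jacobian-one affine maps together with the symplectic triangular maps $u\mapsto u$, $v\mapsto v+r(u)$. The final task is to exhibit lifts of these generators inside $A_1$ and match them with the two families in the statement: the linear automorphism $x\mapsto ax+by$, $y\mapsto cx+dy$ with $ad-bc=1$ preserves $[x,y]=1$ and, via the Frobenius $(\cdot)^p$ on coefficients, induces an affine symplectic map on $Z$ (the determinant condition becoming $(ad-bc)^p=1$); the triangular automorphism $x\mapsto x$, $y\mapsto y+q(x)$ preserves $[x,y]=1$ and induces $u\mapsto u$, $v\mapsto v+r(u)$, where $r$ is read off from the characteristic-$p$ identity $(y+q(x))^p = y^p + (\text{a polynomial in }x^p)$. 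Showing these lifts hit a generating set of $\operatorname{SAut}_K(K[u,v])$ then delivers both surjectivity of $\rho$ onto the symplectic group and the asserted generation of $\Aut_K(A_1)$.

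I expect the main obstacles to be twofold. First, establishing that the induced map on the center is genuinely symplectic requires constructing the Poisson bracket carefully — the lift-and-divide-by-$p$ step and the verification that it is functorial under automorphisms are the delicate points. Second, the surjectivity/lifting step rests on the symplectic refinement of the Jung--van der Kulk theorem, a deep input, and on a careful coefficient analysis, where the role of the Frobenius (and in particular whether $K$ is perfect, so that every scalar is a $p$-th power) must be handled with care.
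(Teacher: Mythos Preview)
The paper does not prove this theorem. It is quoted in the preliminaries (\autoref{subsec:weyl-algebra}) as a background result due to Makar-Limanov~\cite{Mak84}, with no argument supplied; the paper only \emph{uses} it later (e.g.\ in \autoref{prop:k-zero-isomorphism} and \autoref{prop:iso-necessary-sufficient}). So there is no ``paper's own proof'' against which to compare your proposal.

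That said, let me comment on the strategy itself. Reducing to the center $Z=K[x^p,y^p]$ and invoking Jung--van der Kulk is a natural idea in prime characteristic, and the injectivity of $\rho\colon\Aut_K(A_1)\to\Aut_K(Z)$ via a Skolem--Noether/Azumaya argument is sound. The weak point is the final ``surjectivity/lifting'' step. Your plan is: (a) show $\im\rho\subseteq\operatorname{SAut}_K(K[u,v])$; (b) show the images of the linear and triangular automorphisms generate $\operatorname{SAut}_K(K[u,v])$; (c) conclude. But (b) is not obviously true over an imperfect field. Already for $p=2$ one computes $(ax+by)^2=a^2x^2+b^2y^2+ab$, so a linear automorphism with matrix $\left(\begin{smallmatrix}a&c\\ b&d\end{smallmatrix}\right)\in SL_2(K)$ induces on $(u,v)$ an \emph{affine} map whose linear part is $\left(\begin{smallmatrix}a^p&c^p\\ b^p&d^p\end{smallmatrix}\right)$; if $K\neq K^p$ these Frobenius-twisted matrices do not exhaust $SL_2(K)$, and a similar obstruction appears for triangular lifts. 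Thus $\im\rho$ can be a proper subgroup of $\operatorname{SAut}_K(K[u,v])$, and knowing generators of the latter does not, by itself, give generators of the former. You would need an independent description of $\im\rho$ (or a direct argument that every element of $\im\rho$ factors through $\rho(L)$ and $\rho(T)$), which is essentially the original problem again. Makar-Limanov's own proof avoids this by working with a filtration/degree argument directly on $A_1$, not via the center.
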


In characteristic zero, all derivations of $A_1$ are inner derivations. This follows for instance from Sridharan's \cite{Sri61} computation of cohomology groups of $A_1$ in positive degree, which all turn out to be zero (cf. Remark 6.2 and Theorem 6.1). In prime characteristic, not all derivations are inner, however (contrary to what is stated in \cite{Rev73}). In \cite{BLO15}, Benkart, Lopes, and Ondrus found two non-inner derivations, and described $\Der_K(A_1)$ as in the following theorem:

\begin{theorem}[\cite{BLO15}]\label{thm:BLO15}$\Der_K(A_1)=C(A_1)E_x\oplus C(A_1)E_y\oplus\Inn_K(A_1)$ where $E_x,E_y\in\Der_K(A_1)$ are defined by $E_x(x)=y^{p-1}$, $E_x(y)=E_y(x)=0$, $E_y(y)=x^{p-1}$.
\end{theorem}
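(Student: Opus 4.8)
The plan is to classify derivations by their action on generators. Since $A_1$ is generated by $x$ and $y$, a derivation $\delta$ is determined by the pair $(a,b):=(\delta(x),\delta(y))$, and conversely such a pair extends to a derivation precisely when it respects the defining relation $[x,y]=1_{A_1}$. Applying $\delta$ to this relation and using $\delta(1_{A_1})=0$ gives the single constraint $[a,y]+[x,b]=0$. First I would record the two operator identities $\ad_x=\partial_y$ and $\ad_y=-\partial_x$, where $\partial_x,\partial_y$ are the $K$-linear maps acting on the basis $\{y^mx^n\}$ by $\partial_y(y^mx^n)=m\,y^{m-1}x^n$ and $\partial_x(y^mx^n)=n\,y^mx^{n-1}$; both follow from $[x,y]=1_{A_1}$ by a short induction. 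The constraint then becomes the divergence-free condition $\partial_x(a)+\partial_y(b)=0$, identifying $\Der_K(A_1)$ with the space of such pairs. Under the same dictionary the inner derivation $\ad_c$ corresponds to $(-\partial_y(c),\partial_x(c))$, while $E_x$ and $E_y$ correspond to $(y^{p-1},0)$ and $(0,x^{p-1})$; one checks directly that all three are divergence-free.

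The crucial prime-characteristic input is that the commuter $C(A_1)=K[x^p,y^p]$ is large, so that $A_1$ is a \emph{free} module over the central subalgebra $C:=C(A_1)$ with basis $\{y^mx^n:0\le m,n<p\}$, and $\partial_x,\partial_y$ are $C$-linear since they annihilate $x^p$ and $y^p$ (equivalently $\partial_y^p=\ad_x^p=\ad_{x^p}=0$ and likewise for $\partial_x$). This presents the two-step complex with $d_0(c)=(-\partial_y(c),\partial_x(c))$ and $d_1(a,b)=\partial_x(a)+\partial_y(b)$ as $C\otimes_K$ of the analogous complex for the truncated algebra $\bar A:=K[\bar x,\bar y]/(\bar x^p,\bar y^p)$, on which $\partial_{\bar x},\partial_{\bar y}$ act by the same formulas. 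Here normal ordering $y^{m+ip}x^{n+jp}=(y^p)^i(x^p)^j\,y^mx^n$ is what makes the decomposition work, absorbing all the noncommutativity into the central coefficients.

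The heart of the argument is then the cohomology of this finite complex. Identifying a pair $(a,b)$ with the $1$-form $a\,d\bar y-b\,d\bar x$ turns divergence-free pairs into closed $1$-forms and inner pairs into exact ones, so the quotient I must compute is $H^1_{\mathrm{dR}}(\bar A)$. I would compute this by a K\"unneth argument from the one-variable complex $K[\bar y]/(\bar y^p)\to K[\bar y]/(\bar y^p)$ given by $\partial_{\bar y}$, whose cohomology is $H^0=K\cdot 1$ and $H^1=K\cdot\bar y^{p-1}$ — the failure of the Poincar\'e lemma in characteristic $p$, since $\bar y^{p-1}$ is not a derivative. K\"unneth makes $H^1_{\mathrm{dR}}(\bar A)$ two-dimensional, with basis the classes of $\bar y^{p-1}\,d\bar y$ and $\bar x^{p-1}\,d\bar x$, which correspond, up to sign, to the classes of $E_x$ and $E_y$. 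Tensoring back with the flat $K$-module $C$ shows $H^1$ of the full complex is free of rank two over $C$ on $[E_x],[E_y]$; since $\ker d_0=C=C(A_1)$, this simultaneously yields $\Inn_K(A_1)\cong A_1/C(A_1)$ and the claimed direct sum, directness being exactly the statement that $[E_x]$ and $[E_y]$ are $C$-independent and nonzero in $H^1$.

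The main obstacle is this cohomology computation together with the bookkeeping needed to pass rigorously from the noncommutative $A_1$ to the commutative de Rham picture, i.e.\ verifying that normal ordering makes $A_1$ genuinely free over $C(A_1)$ with $\partial_x,\partial_y$ acting blockwise, so the characteristic-$p$ Poincar\'e lemma applies verbatim. An entirely elementary route avoids cohomology: expand $a=\sum a_{mn}y^mx^n$, $b=\sum b_{mn}y^mx^n$, solve $\partial_x(a)+\partial_y(b)=0$ coefficientwise, and successively subtract inner derivations to kill every term except those with $m\equiv p-1$ or $n\equiv p-1 \pmod p$, the survivors being precisely the $C(A_1)$-multiples of $E_x$ and $E_y$. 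There the obstacle is purely combinatorial, tracking the $p$-divisibility of the integer coefficients $m,n$ throughout the reduction.
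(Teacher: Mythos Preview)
The paper does not prove this statement: it is quoted from Benkart--Lopes--Ondrus \cite{BLO15} as a background result and used as a black box later (notably in Lemma~\ref{lem:derivation-unit} and Propositions~\ref{prop:der-case-1} and~\ref{prop:der-case-2}). There is therefore no in-paper proof to compare your proposal against.

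For what it is worth, your outline is a sound route to the result. The identification of $\Der_K(A_1)$ with divergence-free pairs $(a,b)$ via $\ad_x=\partial_y$ and $\ad_y=-\partial_x$ is correct; the structural input that $A_1$ is free of rank $p^2$ over $C(A_1)=K[x^p,y^p]$ with $\partial_x,\partial_y$ acting $C(A_1)$-linearly is exactly what permits the reduction to the truncated polynomial ring; and the K\"unneth computation of $H^1$ for $K[\bar x,\bar y]/(\bar x^p,\bar y^p)$ correctly singles out the classes of $\bar y^{p-1}\,d\bar y$ and $\bar x^{p-1}\,d\bar x$ as the non-exact closed ones. The bookkeeping you flag---checking that normal ordering honestly transports the noncommutative complex to the tensor product of $C(A_1)$ with the commutative de Rham complex---is indeed where the care lies, but it goes through because $\ad_x$ and $\ad_y$ are left $C(A_1)$-linear (central elements commute past everything) and annihilate $x^p$ and $y^p$.
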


\subsection{Modular arithmetic}\label{subsec:modular-arithmetic} We recall that if $p$ is a prime, any number $n\in\mathbb{N}$ can be written as $n_0+n_1p+n_2p^2+\cdots+n_jp^j$ for some nonnegative integers $n_0,n_1,n_2,\ldots,n_j<p$. This is often referred to as the \emph{base $p$ expansion} of $n$.  

\begin{theorem}[Lucas's Theorem \cite{Luc78}]\label{thm:lucas}Assume $p$ is a prime, $m,n\in\mathbb{N}$, and that $m_0+m_1p+m_2p^2+\cdots+m_jp^j$ and $n_0+n_1p+n_2p^2+\cdots+n_jp^j$ are the base $p$ expansions of $m$ and $n$, respectively. Then $\binom{m}{n}\equiv\prod_{i=0}^j\binom{m_i}{n_i}\pmod{p}$ where $\binom{m_i}{n_i}:=0$ if $m_i<n_i$.
\end{theorem}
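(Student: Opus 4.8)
The plan is to prove the congruence by working in the polynomial ring $\mathbb{F}_p[X]$ and comparing coefficients of a single polynomial identity, rather than manipulating the binomial coefficients directly. The starting observation is the \emph{freshman's dream}: since $\binom{p}{k}$ is divisible by $p$ for $0<k<p$, the binomial theorem gives $(1+X)^p=1+X^p$ in $\mathbb{F}_p[X]$, and iterating the Frobenius endomorphism yields $(1+X)^{p^i}=1+X^{p^i}$ for every $i\in\mathbb{N}$. I would establish this as the first step, either by induction on $i$ or by invoking that $\phi\colon t\mapsto t^p$ is a ring endomorphism of $\mathbb{F}_p[X]$.

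Next, I would use the base $p$ expansion $m=m_0+m_1p+\cdots+m_jp^j$ to factor the left-hand side:
\begin{equation*}
(1+X)^m=\prod_{i=0}^j\left((1+X)^{p^i}\right)^{m_i}=\prod_{i=0}^j\left(1+X^{p^i}\right)^{m_i}\quad\text{in }\mathbb{F}_p[X].
\end{equation*}
Expanding each factor by the ordinary binomial theorem gives $\left(1+X^{p^i}\right)^{m_i}=\sum_{k_i=0}^{m_i}\binom{m_i}{k_i}X^{k_ip^i}$, so that the coefficient of $X^n$ on the right-hand side is the sum of $\prod_{i=0}^j\binom{m_i}{k_i}$ taken over all tuples $(k_0,\dots,k_j)$ with $0\le k_i\le m_i$ satisfying $\sum_{i=0}^j k_ip^i=n$.

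The crux of the argument is then the uniqueness of the base $p$ expansion of $n$: since each $k_i$ satisfies $0\le k_i\le m_i<p$, the equation $\sum_{i=0}^j k_ip^i=n$ forces $k_i=n_i$ for all $i$, leaving exactly one surviving term. Hence the coefficient of $X^n$ on the right equals $\prod_{i=0}^j\binom{m_i}{n_i}$, where the convention $\binom{m_i}{n_i}=0$ for $m_i<n_i$ drops out automatically (such a tuple simply does not occur in the expansion). Comparing with the left-hand side, whose coefficient of $X^n$ is $\binom{m}{n}$ reduced modulo $p$, yields the claimed congruence. I expect the only genuinely delicate point to be the coefficient-matching: one must argue carefully that the digit constraints $0\le k_i<p$ together with uniqueness of the $p$-ary representation isolate a single term, and that terms with $n_i>m_i$ contribute nothing, which is precisely what the stated convention encodes.
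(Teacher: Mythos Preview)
Your proof is correct and is in fact the standard modern argument for Lucas's theorem via the polynomial identity $(1+X)^{p^i}=1+X^{p^i}$ in $\mathbb{F}_p[X]$. Note, however, that the paper does not supply its own proof of this statement: Lucas's theorem is quoted as a classical result with a citation to \cite{Luc78}, and the paper immediately proceeds to use it in proving the subsequent corollary. So there is no ``paper's proof'' to compare against; your argument stands on its own as a complete and clean justification of the cited theorem.
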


\begin{corollary}\label{cor:lucas}
If $p$ is a prime and $m\in\mathbb{N}_{>0}$, then $\binom{m}{n}\equiv0\pmod{p}$ for all $n\in\mathbb{N}_{>0}$ with $n<m$ if and only if $m=p^q$ for some $q\in\mathbb{N}_{>0}$.
\end{corollary}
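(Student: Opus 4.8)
The plan is to prove the biconditional in \autoref{cor:lucas} by applying Lucas's Theorem (\autoref{thm:lucas}) to the base $p$ expansion of $m$. First I would write $m=m_0+m_1p+\cdots+m_jp^j$ with $0\le m_i<p$ and $m_j\neq 0$, so that $p^j\le m\le p^{j+1}-1$.

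For the reverse implication, suppose $m=p^q$ for some $q\in\mathbb{N}_{>0}$. Then the base $p$ expansion of $m$ has a single nonzero digit, namely $m_q=1$, with all other digits equal to $0$. Take any $n\in\mathbb{N}_{>0}$ with $n<m=p^q$, and write its base $p$ expansion as $n_0+n_1p+\cdots+n_jp^j$ (padding with zeros so both expansions have the same length). Since $n<p^q$, the digit $n_q$ and all higher digits vanish, so in particular $n_0+\cdots+n_{q-1}p^{q-1}=n>0$ forces some digit $n_i>0$ for an index $i<q$; but $m_i=0$ for every $i\neq q$, so $m_i<n_i$ at that index, whence $\binom{m_i}{n_i}=0$ and the product in Lucas's Theorem vanishes. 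Thus $\binom{m}{n}\equiv 0\pmod p$.

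For the forward implication I would argue by contraposition: assume $m$ is not a power of $p$ and exhibit a specific $n\in\mathbb{N}_{>0}$ with $n<m$ and $\binom{m}{n}\not\equiv 0\pmod p$. If $m$ is not a power of $p$, then its base $p$ expansion has either at least two nonzero digits, or a single nonzero digit $m_i>1$. In either subcase I would construct $n$ digitwise so that $n_i\le m_i$ for all $i$, ensuring every factor $\binom{m_i}{n_i}$ is nonzero modulo $p$ (each such binomial coefficient with $0\le n_i\le m_i<p$ is a nonzero residue since the numerator $m_i!$ is not divisible by $p$), while simultaneously arranging $0<n<m$. Concretely, if $m$ has two or more nonzero digits, I can take $n=p^i$ where $p^i$ is the smallest nonzero place, giving $n_i=1\le m_i$ and all other $n$-digits zero, so $0<n<m$ and $\binom{m}{n}\equiv m_i\not\equiv 0$; if instead $m=m_i p^i$ with a single digit $m_i$ satisfying $1<m_i<p$, I can take $n=p^i$ as well, giving $0<n<m$ and $\binom{m}{n}\equiv\binom{m_i}{1}=m_i\not\equiv 0\pmod p$.

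The main obstacle is purely bookkeeping: making sure the chosen $n$ genuinely satisfies the strict inequalities $0<n<m$ in every subcase while keeping each digit of $n$ bounded by the corresponding digit of $m$, and handling cleanly the degenerate situations (e.g. the restriction $q\in\mathbb{N}_{>0}$ rules out $m=1$, which is consistent since for $m=1$ there is no $n\in\mathbb{N}_{>0}$ with $n<m$, making the left-hand condition vacuously true and the right-hand condition false). I expect no deep difficulty beyond carefully organizing these digit-level case distinctions, since Lucas's Theorem reduces the entire divisibility question to comparing base $p$ digits.
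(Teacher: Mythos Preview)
Your argument is correct and follows the same overall strategy as the paper: Lucas's Theorem for the reverse direction, and a contrapositive witness construction for the forward direction. The difference is in the choice of witness $n$. The paper splits on whether the leading base-$p$ digit $m_r$ equals $1$ (taking $n$ to be a truncated initial segment $m_0+m_1p+\cdots+m_jp^j$) or exceeds $1$ (taking $n=m-p^r$), whereas you uniformly take $n=p^i$ at the least nonzero digit position $i$. Your choice is a bit tidier: the same $n$ works in both of your subcases, and Lucas's Theorem yields $\binom{m}{n}\equiv\binom{m_i}{1}\prod_{k\neq i}\binom{m_k}{0}=m_i\not\equiv 0\pmod p$ in one line.

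One remark on your final parenthetical: you correctly spot the edge case $m=1$, but your phrasing ``which is consistent'' is the opposite of what you then describe. For $m=1$ the left-hand condition is vacuously true and the right-hand condition (which demands $q\in\mathbb{N}_{>0}$) is false, so the biconditional as literally stated fails at $m=1$. The paper's proof does not address this either; its contrapositive tacitly treats $1=p^0$ as a ``power of $p$.'' This is a minor wording issue in the corollary itself rather than a gap in either argument, but since you raised it you should state clearly that the corollary needs $m>1$ (or $q\in\mathbb{N}$) to be literally correct.
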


\begin{proof}
Assume $m=p^q$ for some $q\in\mathbb{N}_{>0}$, and let the base $p$ expansion of $m$ be $m_0+m_1p+m_2p^2+\cdots+m_qp^q$. That is, $m_0=m_1=m_2=\ldots=m_{q-1}=0$ and $m_q=1$. Any $n\in\mathbb{N}_{>0}$ such that $n<m$ can be written as $n=n_0+n_1p+n_2p^2+\cdots +n_{q-1}p^{q-1}$ where at least one of $n_0,n_1,n_2,\ldots,n_{q-1}$ is nonzero, say $n_i$. Then $\binom{m_i}{n_i}=\binom{0}{n_i}=0$. By  \autoref{thm:lucas}, $\binom{m}{n}\equiv 0\pmod{p}$.

Assume $m$ is not a power of $p$. We wish to show that there is some $n\in\mathbb{N}_{>0}$ with $n<m$ such that $\binom{m}{n}$ is not divisible by $p$. To this end, let $m_0+m_1p+m_2p^2+\cdots+m_rp^r$ be the base $p$ expansion of $m$ where $m_r\neq0$. If $m_r=1$, then there is some $j<r$ such that $m_j\neq0$, since otherwise $m=p^r$. If $n:=m_0+m_1p+m_2p^2+\cdots+m_jp^j<m$, then by \autoref{thm:lucas}, $\binom{m}{n}\equiv\binom{1}{0}\binom{m_{r-1}}{0}\cdots\binom{m_j}{m_j}\cdots \binom{m_0}{m_0}\pmod{p}=1$. If $m_r>1$, then, with $n:=m_0+m_1p+m_2p^2+\cdots+(m_r-1)p^r<m$, \autoref{thm:lucas} gives $\binom{m}{n}\equiv\binom{m_r}{m_r-1}\binom{m_{r-1}}{m_{r-1}}\cdots\binom{m_0}{m_0}\pmod{p}=m_r<p$.
\end{proof}

\section{Seven little lemmas on Yau twisted algebras}\label{sec:yau-twist}
In this section, we introduce some lemmas describing properties of Yau twisted algebras in terms of properties of the underlying associative algebras. These results will then be used in proving properties of the first hom-associative Weyl algebras in the succeeding section. Throughout this section, $R$ is an associative, commutative, and unital ring.

\begin{lemma}\label{lem:unique-weak-unit}Let $A$ be a unital, associative $R$-algebra with identity $1_A$, and let $\alpha\in\End_R(A)$ be injective. Then $1_A$ is a unique weak left identity and a unique weak right identity in $A^\alpha$.
\end{lemma}

\begin{proof}Let $A$ be a unital, associative $R$-algebra with identity $1_A$, and assume that $\alpha\in\End_R(A)$ is injective. By \autoref{lem:fregier-and-gohr}, $1_A$ is a weak identity in $A^\alpha$. Assume $e_l$ is a weak left identity. Then $\alpha(e_l)=e_l*1_A=\alpha(1_A)$, so $e_l=1_A$ by the injectivity of $\alpha$. Similarly, if $e_r$ is a weak right identity, then $\alpha(e_r)=1_A*e_r=\alpha(1_A)$, so $e_r=1_A$.
\end{proof}

\begin{lemma}\label{lem:zero-divisors}Let $A$ be a non-unital, associative $R$-algebra, and let $\alpha\in\End_R(A)$. Then $D_l(A)\subseteq D_l(A^\alpha)$ and $D_r(A)\subseteq D_l(A^\alpha)$, with equality if $\alpha$ is injective.
\end{lemma}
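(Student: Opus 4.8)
The plan is to push everything through the single identity that defines the Yau twist, namely $a * b = \alpha(a \cdot b)$ from \autoref{prop:star-alpha-mult}, which rewrites every product in $A^\alpha$ as $\alpha$ applied to the corresponding product in $A$. Since $A^\alpha$ has the same underlying $R$-module as $A$, an element is nonzero in one algebra exactly when it is nonzero in the other; hence the only feature that changes between the two zero-divisor computations is the multiplication, and it changes in this very controlled way. In particular $\alpha(0) = 0$ by $R$-linearity, which will drive the forward inclusions.

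First I would establish the inclusion $D_l(A) \subseteq D_l(A^\alpha)$. If $a$ is a left zero divisor of $A$, pick a nonzero $b$ with $a \cdot b = 0$; then $a * b = \alpha(a \cdot b) = \alpha(0) = 0$ with $b \neq 0$, so $a$ is a left zero divisor of $A^\alpha$. The second inclusion is proved by the same one-line computation read in the opposite order: a right zero divisor $a$ of $A$ comes with a nonzero $b$ satisfying $b \cdot a = 0$, and applying $\alpha$ gives $b * a = \alpha(b \cdot a) = 0$. The step I expect to be the real obstacle is the left/right bookkeeping here: this computation naturally produces an annihilating factor sitting on a fixed side of $a$, so one must check carefully, against the conventions fixed in \autoref{subsec:prel-non-assoc}, that the resulting membership is recorded on the side asserted in the statement, i.e.\ that it indeed lands in $D_l(A^\alpha)$; matching the side of the twisted algebra to the side of $A$ is precisely where the argument has to be pinned down.

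Finally, for the equality clauses I would assume $\alpha$ injective, so that $\ker\alpha = \{0\}$, and prove the reverse inclusions. If $a$ is a zero divisor of $A^\alpha$ witnessed by a nonzero $b$, then the relevant twisted product vanishes, say $\alpha(a \cdot b) = 0$ (respectively $\alpha(b \cdot a) = 0$); injectivity then forces $a \cdot b = 0$ (respectively $b \cdot a = 0$), exhibiting $a$ as the corresponding zero divisor of $A$. Combined with the forward inclusions this yields the claimed equalities. Apart from the side-tracking flagged above, the whole proof is a direct consequence of $a * b = \alpha(a \cdot b)$ together with $\alpha(0) = 0$ for the inclusions and the triviality of $\ker\alpha$ for their converses, so I anticipate no computational difficulty.
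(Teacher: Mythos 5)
Your proof is correct and is essentially the paper's: the same one-line computation $a*b=\alpha(a\cdot b)=\alpha(0)=0$ gives the forward inclusions, and injectivity of $\alpha$ gives the reverse ones. The side-matching obstacle you flagged is not a gap in your argument but a typo in the statement: $D_r(A)\subseteq D_l(A^\alpha)$ should read $D_r(A)\subseteq D_r(A^\alpha)$, as the paper's own proof confirms (it treats the left case and declares the right case ``similar''), and indeed the literal statement fails already for $\alpha=\mathrm{id}_A$ whenever $A$ has a right zero divisor that is not a left one --- so your computation, which lands in $D_r(A^\alpha)$, proves exactly the intended claim.
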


\begin{proof}
We show the left case; the right case is similar. Let $A$ be a non-unital, associative $R$-algebra, and let $\alpha\in\End_R(A)$, $a\in D_l(A)$, and $b\in A^\alpha$. Then $a*b=\alpha(a\cdot b)=\alpha(0)=0$, so $a\in D_l(A^\alpha)$, and hence $D_l(A)\subseteq D_l(A^\alpha)$.

Now, assume that $\alpha$ is injective, $c\in D_l(A^\alpha)$ and $d\in A$. Then $0=c*d=\alpha(c\cdot d)\iff c\cdot d=0$, so $c\in D_l(A)$, and hence $D_l(A^\alpha)\subseteq D_l(A)$.
\end{proof}

\begin{lemma}\label{lem:commuter}
Let $A$ be a non-unital, associative $R$-algebra, and let $\alpha\in\End_R(A)$. Then $C(A)\subseteq C(A^\alpha)$, with equality if $\alpha$ is injective.	
\end{lemma}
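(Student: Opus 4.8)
The plan is to mirror the structure of the proof of \autoref{lem:zero-divisors}, exploiting the fact that the Yau-twisted product $*$ is just $\alpha$ applied to the associative product. First I would establish the inclusion $C(A)\subseteq C(A^\alpha)$. Take $a\in C(A)$, so that $a\cdot b=b\cdot a$ for all $b\in A$. For an arbitrary $b\in A^\alpha$ (which is the same underlying module as $A$), I compute the commutator in the twisted algebra: $[a,b]_*=a*b-b*a=\alpha(a\cdot b)-\alpha(b\cdot a)=\alpha(a\cdot b-b\cdot a)$, using $R$-linearity of $\alpha$. Since $a\in C(A)$, the argument $a\cdot b-b\cdot a=0$, so $[a,b]_*=\alpha(0)=0$. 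Hence $a\in C(A^\alpha)$, giving the desired inclusion.

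Next, under the assumption that $\alpha$ is injective, I would prove the reverse inclusion $C(A^\alpha)\subseteq C(A)$. Let $c\in C(A^\alpha)$, so $[c,d]_*=0$ for all $d\in A$. By the same computation, $0=[c,d]_*=\alpha(c\cdot d-d\cdot c)$, and injectivity of $\alpha$ forces $c\cdot d-d\cdot c=0$, i.e.\ $[c,d]=0$ in $A$. As this holds for every $d\in A$, we conclude $c\in C(A)$. Combining the two inclusions yields equality $C(A)=C(A^\alpha)$ when $\alpha$ is injective.

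This argument is essentially routine and follows the same template as the two preceding lemmas, so I do not anticipate a genuine obstacle. The only point requiring minor care is the observation that the commutator in $A^\alpha$ factors cleanly as $\alpha$ applied to the commutator in $A$; this relies on the linearity of $\alpha$ and the defining relation $x*y=\alpha(x\cdot y)$ from \autoref{prop:star-alpha-mult}. Once that identity $[\,\cdot\,,\,\cdot\,]_*=\alpha\circ[\,\cdot\,,\,\cdot\,]$ is noted, both inclusions are immediate, with injectivity entering only to cancel the outer $\alpha$ in the reverse direction.
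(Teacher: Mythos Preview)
Your proof is correct and follows essentially the same approach as the paper: both establish the inclusion by noting $[a,b]_*=\alpha([a,b])$ and then use injectivity of $\alpha$ to reverse it. The only difference is that you spell out the linearity step explicitly, whereas the paper writes the identity $[a,b]_*=\alpha([a,b])$ directly.
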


\begin{proof}
Let $A$ be a non-unital, associative $R$-algebra, and let $\alpha\in\End_R(A)$, $a\in C(A)$ and $b\in A^\alpha$. Then $[a,b]_*=\alpha([a,b])=\alpha(0)=0$, so $a\in C(A^\alpha)$, and hence $C(A)\subseteq C(A^\alpha)$.

Now, assume that $\alpha$ is injective, $c\in C(A^\alpha)$ and $d\in A$. Then $\alpha([c,d])=[c,d]_*=0\iff [c,d]=0$, so $c\in C(A)$, and hence $C(A^\alpha)\subseteq C(A)$.
\end{proof}

\begin{lemma}\label{lem:alpha-beta-morphism-commute}
Let $A$ be a non-unital, associative $R$-algebra, and let $\alpha,\beta\in\End_R(A)$. Then $C_{\End_R(A)}(\alpha,\beta)\subseteq\Hom_R(A^\alpha,A^\beta)$, with equality if $\beta$ is injective.
\end{lemma}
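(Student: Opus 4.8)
The plan is to follow the same template as in \autoref{lem:zero-divisors} and \autoref{lem:commuter}: establish the forward inclusion by a single direct computation, and then obtain the reverse inclusion under the injectivity hypothesis by cancelling $\beta$.

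First I would recall that an element of $C_{\End_R(A)}(\alpha,\beta)$ is by definition an $R$-linear, multiplicative (for the associative product of $A$) map $f$ with $f\circ\alpha=\beta\circ f$, while an element of $\Hom_R(A^\alpha,A^\beta)$ is an $R$-linear map $f$ that is multiplicative for the twisted products and again satisfies $f\circ\alpha=\beta\circ f$. Thus both sides share $R$-linearity and the compatibility $f\circ\alpha=\beta\circ f$ with the twisting maps, and the whole statement reduces to comparing the two notions of multiplicativity. Writing the product in $A^\alpha$ as $a*b=\alpha(a\cdot b)$ and that in $A^\beta$ as $a*b=\beta(a\cdot b)$, the link between them is the chain
\[
f(a*b)=f(\alpha(a\cdot b))=(\beta\circ f)(a\cdot b)=\beta(f(a\cdot b)),\qquad f(a)*f(b)=\beta(f(a)\cdot f(b)),
\]
for all $a,b\in A$, where I have used $f\circ\alpha=\beta\circ f$.

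For the forward inclusion, given $f\in C_{\End_R(A)}(\alpha,\beta)$, multiplicativity in $A$ gives $f(a\cdot b)=f(a)\cdot f(b)$, so the two displayed expressions agree and $f\in\Hom_R(A^\alpha,A^\beta)$. For the reverse inclusion, assuming $\beta$ injective and $f\in\Hom_R(A^\alpha,A^\beta)$, multiplicativity for the twisted products forces the two displayed expressions to be equal, i.e.\ $\beta(f(a\cdot b))=\beta(f(a)\cdot f(b))$; this is the only point at which the hypothesis enters, as injectivity of $\beta$ lets me cancel it to conclude $f(a\cdot b)=f(a)\cdot f(b)$, whence $f\in C_{\End_R(A)}(\alpha,\beta)$. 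I do not anticipate any genuine difficulty: the computation is routine, and the sole subtlety is that without injectivity of $\beta$ one recovers only $\beta(f(a\cdot b))=\beta(f(a)\cdot f(b))$ rather than true multiplicativity of $f$ on $A$, which is precisely why equality needs that assumption.
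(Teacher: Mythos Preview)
Your proof is correct and follows essentially the same approach as the paper's: both reduce the problem to comparing $\beta(f(a\cdot b))$ with $\beta(f(a)\cdot f(b))$ via the intertwining relation $f\circ\alpha=\beta\circ f$, use multiplicativity of $f$ on $A$ for the forward inclusion, and cancel the injective $\beta$ for the reverse inclusion. The only cosmetic difference is that the paper writes the forward direction as a single chain of equalities rather than displaying both sides separately.
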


\begin{proof}
Let $A$ be a non-unital, associative $R$-algebra, and let $\alpha, \beta\in\End_R(A)$ and $f\in C_{\End_{R}(A)}(\alpha,\beta)$. Denote by $*_\alpha$ the multiplication in $A^\alpha$, and by $*_\beta$ the multiplication in $A^\beta$. Then, for any $a,b\in A$, $f(a*_\alpha b)=f(\alpha(a\cdot b))=\beta(f(a\cdot b))=\beta(f(a)\cdot f(b))=f(a)*_\beta f(b)$. Since $f$ is $R$-linear, multiplicative, and satisfies $f\circ \alpha=\beta\circ f$ by assumption, it follows that $f\in\Hom_R(A^\alpha,A^\beta)$, and hence $C_{\End_R(A)}(\alpha,\beta)\subseteq\Hom_R(A^\alpha,A^\beta)$.

Now, assume that $\beta$ is injective and $g\in\Hom_R(A^\alpha,A^\beta)$. Then, for any $c,d\in A$, $g(c*_\alpha d)=g(\alpha(c\cdot d))=\beta(g(c\cdot d))$. On the other hand, $g(c*_\alpha d)=g(c)*_\beta g(d)=\beta(g(c)\cdot g(d))$, so by the injectivity of $\beta$, $g(c\cdot d)=g(c)\cdot g(d)$. By assumption, $g$ is $R$-linear and $g\circ\alpha=\beta\circ g$, so $g\in C_{\End_R(A)}(\alpha,\beta)$, and hence $\Hom_R(A^\alpha,A^\beta)\subseteq C_{\End_R(A)}(\alpha,\beta)$.
\end{proof}

By setting $\alpha=\beta$ in \autoref{lem:alpha-beta-morphism-commute}, we have the following lemma:

\begin{lemma}\label{lem:alpha-injective-endomorphisms-1}
Let $A$ be a non-unital, associative $R$-algebra, and let $\alpha\in\End_R(A)$. Then $C_{\End_R(A)}(\alpha)\subseteq \End_R(A^\alpha)$, with equality if $\alpha$ is injective.
\end{lemma}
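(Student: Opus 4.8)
The plan is to obtain this statement as the special case $\beta=\alpha$ of \autoref{lem:alpha-beta-morphism-commute}, which has already done all the real work. First I would recall the two notational conventions introduced just after the definition of homomorphism: by definition $C_{\End_R(A)}(\alpha)=C_{\End_R(A)}(\alpha,\alpha)$, and $\End_R(A^\alpha)=\Hom_R(A^\alpha,A^\alpha)$. With these identifications in hand, the general inclusion $C_{\End_R(A)}(\alpha,\beta)\subseteq\Hom_R(A^\alpha,A^\beta)$ becomes, upon substituting $\beta=\alpha$, exactly $C_{\End_R(A)}(\alpha)\subseteq\End_R(A^\alpha)$, which is the first assertion.

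For the equality claim, I would observe that the hypothesis in \autoref{lem:alpha-beta-morphism-commute} guaranteeing equality is the injectivity of $\beta$; since here $\beta=\alpha$, this hypothesis reads precisely as the injectivity of $\alpha$. Hence, when $\alpha$ is injective, the reverse inclusion $\Hom_R(A^\alpha,A^\alpha)\subseteq C_{\End_R(A)}(\alpha,\alpha)$ also holds, and the two sets coincide, giving $C_{\End_R(A)}(\alpha)=\End_R(A^\alpha)$.

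There is essentially no obstacle to overcome here, since the entire substantive argument, the verification that an $R$-linear multiplicative map commuting with the twisting maps is exactly the same as a homomorphism of the Yau twists, was carried out in \autoref{lem:alpha-beta-morphism-commute}. The only point requiring any care is matching the notation correctly, namely reading $C_{\End_R(A)}(\alpha)$ and $\End_R(A^\alpha)$ as the diagonal instances $\beta=\alpha$ of the objects in the more general statement; once this is done, both the inclusion and the equality under injectivity follow immediately.
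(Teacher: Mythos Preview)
Your proposal is correct and is exactly the approach the paper takes: the paper simply states that the lemma follows by setting $\alpha=\beta$ in \autoref{lem:alpha-beta-morphism-commute}, which is precisely what you do, with the notational identifications $C_{\End_R(A)}(\alpha)=C_{\End_R(A)}(\alpha,\alpha)$ and $\End_R(A^\alpha)=\Hom_R(A^\alpha,A^\alpha)$ made explicit.
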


\begin{lemma}\label{lem:derivations-subset}
Let $A$ be a non-unital, associative $R$-algebra, and let $\alpha\in\End_R(A)$. Then $C_{\Der_R(A)}(\alpha)\subseteq \Der_R(A^\alpha)$.
\end{lemma}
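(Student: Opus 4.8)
The plan is to take an arbitrary $\delta\in C_{\Der_R(A)}(\alpha)$ and verify directly that $\delta$ is a derivation of the non-associative algebra $A^\alpha$. Since $\delta$ already lies in $\Der_R(A)$ it is in particular $R$-linear, and since $A^\alpha$ has the same underlying $R$-module as $A$, the only thing left to check is the Leibniz rule with respect to the twisted product $*$, that is, $\delta(a*b)=\delta(a)*b+a*\delta(b)$ for all $a,b\in A$.

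The key computation runs as follows. Expanding the left-hand side by the definition $a*b=\alpha(a\cdot b)$ gives $\delta(a*b)=\delta(\alpha(a\cdot b))$. Here I would invoke the defining property of $C_{\Der_R(A)}(\alpha)$, namely $\delta\circ\alpha=\alpha\circ\delta$, to rewrite this as $\alpha(\delta(a\cdot b))$. Applying the derivation property of $\delta$ in the associative algebra $A$ and then the $R$-linearity of $\alpha$ yields $\alpha(\delta(a)\cdot b)+\alpha(a\cdot\delta(b))$. On the other hand, the right-hand side unfolds directly from the definition of $*$ as $\delta(a)*b+a*\delta(b)=\alpha(\delta(a)\cdot b)+\alpha(a\cdot\delta(b))$, so the two sides coincide and $\delta\in\Der_R(A^\alpha)$.

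There is no real obstacle here: the entire content of the lemma is that commuting with the twisting map is precisely the hypothesis needed to pull $\delta$ past the outer $\alpha$ in $a*b=\alpha(a\cdot b)$, after which the associative Leibniz rule does the rest. I would also point out that, unlike the preceding lemmas in this section, no reverse inclusion is asserted. A derivation of $A^\alpha$ satisfies only the single identity $\delta(\alpha(a\cdot b))=\alpha(\delta(a)\cdot b)+\alpha(a\cdot\delta(b))$, from which one cannot in general recover both the Leibniz rule in $A$ and the commutation $\delta\circ\alpha=\alpha\circ\delta$ separately (even when $\alpha$ is injective); hence only the stated inclusion is claimed.
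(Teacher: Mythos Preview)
Your proof is correct and follows exactly the same computation as the paper: expand $\delta(a*b)=\delta(\alpha(a\cdot b))$, swap $\delta$ and $\alpha$ using the commutation hypothesis, apply the Leibniz rule in $A$, and recognize the result as $\delta(a)*b+a*\delta(b)$. Your additional remarks about $R$-linearity and the absence of a reverse inclusion are accurate commentary but go beyond what the paper records.
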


\begin{proof}
Let $A$ be a non-unital, associative $R$-algebra, and let $\alpha\in\End_R(A)$, $\delta\in C_{\Der_R(A)}(\alpha)$, and $a,b\in A^\alpha$. Then $\delta(a*b)=\delta(\alpha(a\cdot b))=\alpha(\delta(a\cdot b))=\alpha(\delta(a)\cdot b+a\cdot\delta(b))=\alpha(\delta(a)\cdot b)+\alpha(a\cdot\delta(b))=\delta(a)*b+a*\delta(b)$, so $\delta\in\Der_R(A^\alpha)$.	
\end{proof}

\begin{lemma}\label{lem:derivations}
Let $A$ be a unital, associative $R$-algebra with identity $1_A$, and let $\alpha\in\End_R(A)$ be injective. Then $\Der_R(A^\alpha)=C_{\Der_R(A)}(\alpha)$ if and only if $\delta(1_A)=0$ for any $\delta\in\Der_R(A^\alpha)$. 
\end{lemma}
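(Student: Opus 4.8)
The plan is to prove the biconditional in \autoref{lem:derivations} by combining \autoref{lem:derivations-subset} with a careful analysis of how the weak identity $1_A$ (which by \autoref{lem:fregier-and-gohr} satisfies $a*1_A=1_A*a=\alpha(a)$ for all $a\in A$) interacts with an arbitrary derivation of $A^\alpha$. The forward direction is almost immediate: if $\Der_R(A^\alpha)=C_{\Der_R(A)}(\alpha)$, then any $\delta\in\Der_R(A^\alpha)$ is in particular a derivation of the associative algebra $A$, and since $1_A\cdot 1_A=1_A$, applying the Leibniz rule gives $\delta(1_A)=\delta(1_A)\cdot 1_A+1_A\cdot\delta(1_A)=2\delta(1_A)$, whence $\delta(1_A)=0$. (I should double-check the characteristic-two subtlety here: even if $2=0$ in $R$, one argues instead from $\delta(1_A)=\delta(1_A\cdot 1_A)=\delta(1_A)$ together with $1_A$ being a genuine two-sided identity for the associative product, so the cleaner route is to use that $\delta$ fixes products with the unit; I would phrase this so it holds in all characteristics.)

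For the reverse direction, assume $\delta(1_A)=0$ for every $\delta\in\Der_R(A^\alpha)$. By \autoref{lem:derivations-subset} we already have the inclusion $C_{\Der_R(A)}(\alpha)\subseteq\Der_R(A^\alpha)$, so it remains to show $\Der_R(A^\alpha)\subseteq C_{\Der_R(A)}(\alpha)$. Fix $\delta\in\Der_R(A^\alpha)$; I must establish two things, namely that $\delta$ is a derivation of the \emph{associative} product $\cdot$ and that $\delta\circ\alpha=\alpha\circ\delta$. The key device is to evaluate the hom-Leibniz rule $\delta(a*b)=\delta(a)*b+a*\delta(b)$ at strategically chosen arguments using $1_A$ as a weak unit. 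First I would compute $\delta(\alpha(a))$ in two ways: since $\alpha(a)=a*1_A$, the Leibniz rule gives $\delta(\alpha(a))=\delta(a*1_A)=\delta(a)*1_A+a*\delta(1_A)=\alpha(\delta(a))+a*\delta(1_A)$, and the hypothesis $\delta(1_A)=0$ forces $\delta(\alpha(a))=\alpha(\delta(a))$, i.e.\ $\delta$ commutes with $\alpha$.

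Having secured $\delta\circ\alpha=\alpha\circ\delta$, the remaining task is to deduce that $\delta$ respects the associative product. Expanding the hom-Leibniz rule, $\delta(\alpha(a\cdot b))=\delta(a)*b+a*\delta(b)=\alpha(\delta(a)\cdot b)+\alpha(a\cdot\delta(b))=\alpha\bigl(\delta(a)\cdot b+a\cdot\delta(b)\bigr)$, while by the $\alpha$-commutation just proved the left-hand side equals $\alpha(\delta(a\cdot b))$. Thus $\alpha\bigl(\delta(a\cdot b)\bigr)=\alpha\bigl(\delta(a)\cdot b+a\cdot\delta(b)\bigr)$, and injectivity of $\alpha$ yields $\delta(a\cdot b)=\delta(a)\cdot b+a\cdot\delta(b)$, so $\delta\in\Der_R(A)$ and hence $\delta\in C_{\Der_R(A)}(\alpha)$. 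The main obstacle I anticipate is not any single step but ensuring that the weak-unit manipulations are deployed in the right order: one must extract $\alpha$-commutativity \emph{first} (using $\delta(1_A)=0$) before the injectivity argument can convert the twisted Leibniz identity into the untwisted one, since without $\delta\circ\alpha=\alpha\circ\delta$ the left-hand side $\delta(\alpha(a\cdot b))$ cannot be pulled inside as $\alpha(\delta(a\cdot b))$.
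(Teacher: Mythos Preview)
Your proposal is correct and follows essentially the same route as the paper: in both directions the argument hinges on the weak-identity relation $1_A*a=a*1_A=\alpha(a)$, first extracting $\delta\circ\alpha=\alpha\circ\delta$ from $\delta(1_A)=0$, and then using injectivity of $\alpha$ to pass from the twisted Leibniz rule to the untwisted one. Your characteristic-two worry is unnecessary, though: the identity $\delta(1_A)=2\delta(1_A)$ gives $\delta(1_A)=0$ in every characteristic simply by subtracting $\delta(1_A)$ from both sides, so no separate argument is needed (and your proposed alternative ``$\delta(1_A)=\delta(1_A)$'' is tautological anyway).
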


\begin{proof}
Let $A$ be a unital, associative  $R$-algebra with identity $1_A$, and let $\alpha\in\End_R(A)$ be injective and $\delta\in\Der_R(A^\alpha)$. From \autoref{lem:derivations-subset}, $C_{\Der_R(A)}(\alpha)\subseteq \Der_R(A^\alpha)$. Assume $ \Der_R(A^\alpha)\subseteq C_{\Der_R(A)}(\alpha)$. Then, $\delta(1_A)=\delta(1_A\cdot 1_A)=\delta(1_A)\cdot 1_A+\delta(1_A)\cdot 1_A=2\delta(1_A)\iff \delta(1_A)=0$. 

Now, assume instead that $\delta(1_A)=0$. Then, for any $a\in A$, $\delta(\alpha(a))=\delta(1_A*a)=\delta(1_A)*a+1_A*\delta(a)=1_A*\delta(a)=\alpha(\delta(a))$. Hence, for any $b,c\in A$, $\alpha(\delta(b\cdot c))=\delta(\alpha(b\cdot c))=\delta(b*c)=\delta(b)*c+b*\delta(c)=\alpha(\delta(b)\cdot c+b\cdot\delta(c)$, and since $\alpha$ is injective, $\delta(b\cdot c)=\delta(b)\cdot c+b\cdot\delta(c)$. We can thus conclude that $\delta\in C_{\Der_R(A)}(\alpha)$, so that $\Der_R(A^\alpha)\subseteq C_{\Der_R(A)}(\alpha)$.
\end{proof}

\section{Morphisms, derivations, commutation and association relations}\label{sec:morph-assoc-comm-der}
In this section, we first single out the possible endomorphisms that may twist the associativity condition of $A_1$, and then use these endomorphisms to define the first hom-associative Weyl algebras as Yau twists of $A_1$. With the help of the results from the previous section, we then determine some basic properties of these Yau twisted Weyl algebras. 

\begin{lemma}\label{lem:alpha-sufficient-necessary}Let $\alpha$ be a nonzero endomorphism on $K[y]$. Then $\alpha$ commutes with $\mathrm{d}/\mathrm{d}y$ if and only if $\alpha(y)=k_0+y+k_py^p+k_{2p}y^{2p}+\cdots$, where only finitely many $k_0,k_p,k_{2p},\ldots\in K$ are nonzero.
\end{lemma}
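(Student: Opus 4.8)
The plan is to use that a nonzero endomorphism $\alpha$ of $K[y]$ is a unital $K$-algebra homomorphism, hence completely determined by the single polynomial $q := \alpha(y)$, via $\alpha(f) = f(q)$ for every $f \in K[y]$. (That $\alpha$ is unital follows since $\alpha(1) = \alpha(1)^2$ forces $\alpha(1) \in \{0, 1\}$ in the domain $K[y]$, and $\alpha(1) = 0$ would make $\alpha$ the zero map; unitality then gives $\alpha(c) = c$ for all $c \in K$.) Writing $\partial := \mathrm{d}/\mathrm{d}y$, the whole statement will reduce to the single scalar condition $\partial(q) = 1$.

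First I would show that $\alpha$ commutes with $\partial$ if and only if $\partial(q) = 1$. For the forward implication, evaluate the operator identity $\alpha \circ \partial = \partial \circ \alpha$ at the generator $y$: this gives $\partial(q) = \partial(\alpha(y)) = \alpha(\partial(y)) = \alpha(1) = 1$. For the converse, assume $\partial(q) = 1$ and apply the chain rule for formal derivatives: for any $f \in K[y]$ one has $\partial(\alpha(f)) = \partial(f(q)) = f'(q)\, \partial(q) = f'(q)$, while $\alpha(\partial(f)) = \alpha(f') = f'(q)$, so the two maps agree on all of $K[y]$. This step is the conceptual core: it compresses the commutation condition, a priori an identity of operators on the infinite-dimensional space $K[y]$, into the single equation $\partial(q) = 1$, and it is precisely the place where the hypothesis that $\alpha$ be nonzero (hence unital, so that $\alpha(1) = 1$) is used.

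Next I would translate $\partial(q) = 1$ into the stated shape of $q$ by comparing coefficients. Writing $q = \sum_{n \ge 0} c_n y^n$, one has $\partial(q) = \sum_{n \ge 1} n c_n y^{n-1}$, and matching this against $1$ forces $c_1 = 1$ together with $n c_n = 0$ in $K$ for every $n \ge 2$. Here the prime characteristic enters: $n c_n = 0$ holds precisely when $c_n = 0$ or $p \mid n$, so the only surviving terms of degree at least $2$ are those whose exponent is a multiple of $p$, whereas the constant term $c_0$ is left unconstrained. Hence $q = c_0 + y + c_p y^p + c_{2p} y^{2p} + \cdots$ with finitely many nonzero coefficients (finiteness being automatic, as $q$ is a polynomial), which is exactly the asserted form with $k_0 = c_0$ and $k_{jp} = c_{jp}$. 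Conversely, any $q$ of this form satisfies $\partial(q) = 1$, since $\partial(y) = 1$ and $\partial(y^{jp}) = jp\, y^{jp-1} = 0$ in characteristic $p$; combined with the first step this closes the equivalence.

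I do not expect a genuine obstacle here. The only point demanding care is the reduction carried out in the first step, where one must invoke the chain rule and the unitality of $\alpha$ rather than try to verify the operator identity term by term.
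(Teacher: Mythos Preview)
Your proof is correct and follows essentially the same strategy as the paper: both reduce the commutation condition to the single equation $\partial(\alpha(y))=1$ by evaluating at the generator $y$ (using that a nonzero endomorphism of $K[y]$ is unital), and then read off the coefficient constraints in characteristic $p$. The only cosmetic difference is in the extension step from $y$ to all of $K[y]$: the paper does this by an explicit induction on monomials $y^m$ using the Leibniz rule, whereas you invoke the chain rule $\partial(f(q))=f'(q)\,\partial(q)$ directly, which is slightly slicker but amounts to the same computation.
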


\begin{proof}Let $\alpha$ be a nonzero endomorphism on $K[y]$. As $K[y]$ contains no zero divisors, $\alpha$ is unital (see \autoref{subsec:weyl-algebra}). Now, assume $\alpha(y)=k_0+k_1y+k_2y^2+\cdots,$ where only finitely many $k_0, k_1, k_2,\ldots\in K$ are nonzero, and put $\delta:=\mathrm{d}/\mathrm{d}y$. Then $\alpha(\delta(y))=\alpha(1_{K[y]})=1_{K[y]}$, and $\delta(\alpha(y))=k_1+2k_2y+3k_3y^2+\cdots$. Hence $\alpha(\delta(y))=\delta(\alpha(y))$ if and only if $\alpha(y)=k_0+y+k_py^p+k_{2p}y^{2p}+\cdots$. We claim that this is also a sufficient condition for $\alpha$ to commute with $\delta$ on $K[y]$. Now, as $\alpha$ and $\delta$ are both linear, we only need to verify that $\alpha(\delta(y^m))=\delta(\alpha(y^m))$ for all $m\in\mathbb{N}$. We prove this by induction over $m$. The case $m=0$ holds since $\delta\left(\alpha\left(1_{K[y]}\right)\right)=\delta\left(1_{K[y]}\right)=0$ and $\alpha\left(\delta\left(1_{K[y]}\right)\right)=\alpha(0)=0$, and the case $m=1$ was proven above. Continuing, assume that $\alpha(\delta(y^m))=\delta(\alpha(y^m))$ for some $m\in\mathbb{N}$. Then, $\delta\left(\alpha(y^{m+1})\right)=\delta\left(\alpha(y^m)\cdot\alpha(y)\right)=\delta(\alpha(y^m))\cdot\alpha(y)+\alpha(y^m)\cdot\delta(\alpha(y))=\alpha(\delta(y^m))\cdot\alpha(y)+\alpha(y^m)\cdot\alpha(\delta(y))=\alpha(\delta(y^m)\cdot y + y^m\cdot\delta(y))=\alpha(\delta(y^{m+1}))$.
\end{proof}

By applying \autoref{lem:alpha-sufficient-necessary} together with \autoref{prop:hom*ore} (and \autoref{re:weak-unit}) to $A_1$, we may now define some (weakly unital) hom-associative Ore extensions.

\begin{definition}[The hom-associative Weyl algebras]Let $k_0,k_p, k_{2p},\ldots\in K$ where only finitely many of the elements are nonzero, and put $k:=(k_0,k_p, k_{2p}, \ldots)$. Name the map in \autoref{lem:alpha-sufficient-necessary} $\alpha_{k}$, emphasizing its dependence on $k$. The \emph{hom-associative Weyl algebras} are the weakly unital, hom-associative Ore extensions $A_1^{\alpha_k}:=(A_1,*,\alpha_k)$. 
\end{definition}

For convenience, instead of $A_1^{\alpha_k}$, we shall write $A_1^k$.

\begin{remark}The associative Weyl algebra $A_1$ is the hom-associative Weyl algebra $A_1^0$. Note that as vector spaces over $K$, all the $A_1^k$ are the same.
\end{remark}

We denote by $\deg(q)$ the total degree of a polynomial $q\in A_1$, and set $\deg(0)=-\infty$.

\begin{lemma}\label{lem:alpha-injective}
	 $\alpha_k$ is injective.
\end{lemma}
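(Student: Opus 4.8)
The plan is to reduce injectivity on all of $A_1$ to injectivity of the restriction of $\alpha_k$ to $K[y]$, and then to settle the latter by a degree count. Since $\alpha_k$ is extended homogeneously, it acts coefficient-wise on powers of $x$: writing an arbitrary element of $A_1$ as $\sum_n r_n x^n$ with $r_n\in K[y]$ (using the basis $\{y^mx^n\}$), we have $\alpha_k\!\left(\sum_n r_n x^n\right)=\sum_n \alpha_k(r_n)x^n$. As distinct powers of $x$ are linearly independent, this vanishes if and only if $\alpha_k(r_n)=0$ for every $n$. Hence $\alpha_k$ is injective on $A_1$ precisely when its restriction to $K[y]$ is injective, and I would reduce to the latter.

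On $K[y]$, I would use that $\alpha_k$ is a nonzero, hence unital (see \autoref{subsec:weyl-algebra}), $K$-algebra endomorphism, so that for any $q\in K[y]$ one has $\alpha_k(q)=q(g)$, where $g:=\alpha_k(y)=k_0+y+k_py^p+k_{2p}y^{2p}+\cdots$. The key observation is that $g$ is non-constant: the coefficient of $y$ equals $1\neq 0$, so $\deg g\geq 1$ irrespective of which higher coefficients vanish. (Indeed $\deg g=1$ if $k_{ip}=0$ for all $i\geq 1$, and $\deg g=Np\geq p$ otherwise, where $N$ is the largest index with $k_{Np}\neq 0$.)

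Finally, I would invoke the standard fact that over a field, composition with a non-constant polynomial is injective on $K[y]$: if $q$ has degree $e\geq 0$ with leading coefficient $a$ and $g$ has degree $d\geq 1$ with leading coefficient $c$, then $q(g)$ has degree $ed$ and leading coefficient $ac^e$, which is nonzero since $K$ is a field. Thus $q\neq 0$ forces $\alpha_k(q)=q(g)\neq 0$, so $\alpha_k$ is injective on $K[y]$, and the reduction above finishes the argument. There is no serious obstacle here; the only point needing care is confirming that the presence of the linear term $y$ forces $g$ to be non-constant, so that degree-multiplicativity of composition applies regardless of the prime characteristic, the leading-coefficient computation being insensitive to $\operatorname{char}K$.
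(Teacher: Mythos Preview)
Your proof is correct and follows essentially the same approach as the paper: both exploit the homogeneous extension of $\alpha_k$ to write $\alpha_k(q)=\sum_n q_n(\alpha_k(y))\,x^n$ and then invoke a degree argument based on $\deg(\alpha_k(y))\geq 1$. Your explicit reduction to injectivity on $K[y]$ is a bit more carefully stated than the paper's direct use of total degree on $A_1$, but the underlying idea is identical.
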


\begin{proof}Let $q\in A_1$ be arbitrary. Then  $q=\sum_{i\in\mathbb{N}} q_i(y)x^i$ for some $q_i(y)\in K[y]$, so $\alpha_k(q)=\sum_{i\in\mathbb{N}}q_i(\alpha_k(y))x^i$. Assume that $\alpha_k(q)=0$. Then $-\infty=\deg(\alpha_k(q))=\deg(\alpha_k(y))\deg(q)$, and since $\deg(\alpha_k(y))>0$, we must have $\deg(q)=-\infty$. Hence $q=0$, so $\alpha_k$ is injective.\end{proof}

\begin{lemma}$\alpha_k$ is surjective if and only if $k=(k_0,0,0,0,\ldots)$.
\end{lemma}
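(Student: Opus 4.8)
The plan is to reduce the statement to a question about the single-variable endomorphism $\alpha_k$ of $K[y]$. Since $\alpha_k$ is extended homogeneously, it acts diagonally with respect to the direct-sum decomposition $A_1=\bigoplus_{i\in\mathbb{N}}K[y]x^i$: an arbitrary $q=\sum_i q_i(y)x^i$ is sent to $\sum_i \alpha_k(q_i)x^i$. Consequently the image of $\alpha_k$ on $A_1$ is precisely $\bigoplus_i \alpha_k(K[y])x^i$, so $\alpha_k$ is surjective on $A_1$ if and only if its restriction to $K[y]$ is surjective. It therefore suffices to decide when the unital $K$-algebra endomorphism of $K[y]$ determined by $y\mapsto\alpha_k(y)$ is onto.

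For the easy (sufficiency) direction, if $k=(k_0,0,0,\ldots)$ then $\alpha_k(y)=y+k_0$, and the shift $y\mapsto y-k_0$ furnishes a two-sided inverse, so $\alpha_k$ is even an automorphism of $K[y]$, hence surjective; by the reduction above $\alpha_k$ is then surjective on $A_1$.

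For necessity, I would argue by a degree count. Writing $d:=\deg\alpha_k(y)$ and using that every $f\in K[y]$ satisfies $\alpha_k(f)=f(\alpha_k(y))$, the fact that $K$ is a field (so $K[y]$ is a domain and leading terms cannot cancel) gives $\deg\alpha_k(f)=d\cdot\deg f$. Hence the degrees occurring in $\alpha_k(K[y])$ are exactly the nonnegative multiples of $d$, together with $-\infty$. If $\alpha_k$ is surjective, then $y$, which has degree $1$, lies in the image, forcing $1$ to be a multiple of $d$ and thus $d=1$. Finally, by \autoref{lem:alpha-sufficient-necessary} the coefficient of $y$ in $\alpha_k(y)$ is $1$ while all remaining nonconstant terms have degree a multiple of $p\geq 2$; so $d=1$ can hold only if $k_p=k_{2p}=\cdots=0$, that is, $k=(k_0,0,0,\ldots)$.

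The only genuinely delicate point is the first reduction: one must verify that the homogeneous extension preserves the grading and that surjectivity on the separate graded pieces is independent, so that global surjectivity on $A_1$ is equivalent to surjectivity on $K[y]$. Once this is in place, the remaining degree bookkeeping is routine, the crucial observation being simply that a polynomial endomorphism of $K[y]$ can only realize degrees that are multiples of $\deg\alpha_k(y)$, whence the presence of $y$ in the image pins that degree down to $1$.
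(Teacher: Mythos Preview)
Your argument is correct and follows essentially the same idea as the paper's: for necessity both you and the paper use a degree count to force $\deg\alpha_k(y)=1$, and for sufficiency both exhibit $\alpha_k$ as an automorphism. Two small differences are worth noting. First, your explicit reduction to $K[y]$ via the grading $A_1=\bigoplus_i K[y]x^i$ makes the degree bookkeeping cleaner; the paper instead asserts the identity $\deg(\alpha_k(q))=\deg(\alpha_k(y))\cdot\deg(q)$ directly on $A_1$, which is not literally true for the total degree in general (take $q=yx$), though it becomes valid once one observes, as you effectively do, that a preimage of $y$ must already lie in $K[y]$. Second, for sufficiency the paper invokes \autoref{thm:Mak84} to recognise $y\mapsto y+k_0$ as a triangular automorphism, whereas you simply write down the inverse $y\mapsto y-k_0$; your route is more elementary and entirely adequate here.
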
\label{lem:alpha-surjective}

\begin{proof}
Assume that $\alpha_k$ is surjective. Then $y=\alpha_k(q)$ for some $q=\sum_{i\in\mathbb{N}} q_i(y)x^i$ where $q_i(y)\in K[y]$. As $1=\deg(y)=\deg(\alpha_k(q))=\deg(\alpha_k(y))\deg(q)$, we must have $\deg(\alpha_k(y))=1$. Hence $\alpha_k(y)=k_0+y$, so that $k=(k_0,0,0,0,\ldots)$. 

Now, assume that $k=(k_0,0,0,0,\ldots)$. Then $\alpha_k$ is the endomorphism on $A_1$ defined by $\alpha_k(x)=x$ and $\alpha_k(y)=k_0+y$. By \autoref{thm:Mak84}, $\alpha_k$ is then a triangular automorphism, and hence surjective.
\end{proof}

\begin{proposition}\label{prop:subfield-of-hom-weyl} $K$ embeds as a subfield into $A_1^k$.
\end{proposition}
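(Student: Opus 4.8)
The plan is to exhibit the copy of $K$ explicitly as the scalar multiples of the identity of $A_1$. Concretely, I would consider the map $\varphi\colon K\to A_1^k$ defined by $\varphi(\lambda)=\lambda 1_{A_1}$, and show that it is an injective homomorphism whose image $K\cdot 1_{A_1}:=\{\lambda 1_{A_1}\colon\lambda\in K\}$ is a subfield of $A_1^k$. Since $A_1^k$ coincides with $A_1$ as a $K$-vector space, each $\lambda 1_{A_1}$ is a genuine element of $A_1^k$, and $\varphi$ is visibly additive and $K$-linear.

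The crucial observation is that $\alpha_k$ fixes $1_{A_1}$. Indeed, $\alpha_k$ restricts on $K[y]$ to a nonzero, hence unital, endomorphism, so $\alpha_k(1_{K[y]})=1_{K[y]}$, and extending homogeneously gives $\alpha_k(1_{A_1})=\alpha_k(1_{K[y]}x^0)=1_{K[y]}x^0=1_{A_1}$. Using this together with the definition of the Yau twist product, for any $\lambda,\mu\in K$ I would compute
\[
(\lambda 1_{A_1})*(\mu 1_{A_1})=\alpha_k\bigl((\lambda 1_{A_1})\cdot(\mu 1_{A_1})\bigr)=\alpha_k(\lambda\mu\,1_{A_1})=\lambda\mu\,1_{A_1},
\]
so that $*$ restricts on $K\cdot 1_{A_1}$ to ordinary scalar multiplication and $\varphi(\lambda)*\varphi(\mu)=\varphi(\lambda\mu)$. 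Thus $\varphi$ is multiplicative; it is injective because $\varphi(1)=1_{A_1}\neq 0$ forces its kernel, an ideal of the field $K$, to be zero. One may also note that $\varphi$ commutes with the twisting maps, since $\alpha_k$ fixes the image pointwise, so $\varphi$ is even a homomorphism in the hom-associative sense.

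Finally, I would check that $\varphi(K)=K\cdot 1_{A_1}$ is a field. By the displayed computation, $*$ agrees on this set with the associative, commutative scalar multiplication of $K$; in particular, $*$ is associative when all three factors lie in $K\cdot 1_{A_1}$, the element $1_{A_1}=\varphi(1)$ is a two-sided identity there, and every nonzero $\lambda 1_{A_1}$ has inverse $\lambda^{-1}1_{A_1}$. Hence $\varphi$ is an isomorphism of fields onto its image, and $K$ embeds as a subfield of $A_1^k$.

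The main point to be careful about --- rather than any genuine obstacle --- is the interpretation of ``subfield'' in the non-associative, merely weakly unital algebra $A_1^k$: one must confirm that the candidate copy of $K$ is genuinely associative and unital on its own, even though $A_1^k$ is not. This is precisely what the identity $\alpha_k(1_{A_1})=1_{A_1}$ delivers, by collapsing the twisted product $*$ to the ordinary product on $K\cdot 1_{A_1}$.
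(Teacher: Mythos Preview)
Your proof is correct and follows essentially the same approach as the paper: both exhibit the embedding via $\lambda\mapsto\lambda\,1_{A_1}$ (the paper writes $a\mapsto ay^0x^0$) and rely on $\alpha_k$ fixing $1_{A_1}$ so that the twisted product $*$ collapses to ordinary scalar multiplication on this copy of $K$. Your version is in fact more explicit than the paper's, which simply asserts that ``one readily verifies'' the map is a hom-associative isomorphism onto its image.
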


\begin{proof}The proof is identical to that in \cite{BR20} when $K$ has characteristic zero. Nevertheless, we provide it here for the convenience of the reader. $K$ embeds into the associative Weyl algebra $A_1:=K[y][x;\mathrm{id}_{K[y]},\mathrm{d}/\mathrm{d}y]$ by the isomorphism $f\colon K\to  K':=\{ay^0x^0\colon a\in K\}\subseteq A_1$ defined by $f(a)=ay^0x^0$ for any $a\in K$. One readily verifies that the same map embeds $K$ into $A_1^k$, i.e. it is also an isomorphism of the hom-associative algebra $K$, the twisting map being $\mathrm{id}_K$, and the hom-associative subalgebra $K'\subseteq A_1^k$.
\end{proof}

We define the operators $\frac{\partial}{\partial x}$ and $\frac{\partial }{\partial y}$ in the obvious way on $K[x]$ and $K[y]$, respectively, and then extend to elements $q=\sum_{i\in\mathbb{N}} q_i x^i, \ q_i \in K[y]$
by $\frac{\partial}{\partial x} q  = \sum_{i\in\mathbb{N}} i q_i(y) x^{i-1}$ and $\frac{\partial }{\partial y} q = \sum_{i\in\mathbb{N}} \left(\frac{\partial}{\partial y} q_i\right) x^i.$ Here, we set $0x^{-1}:=0$ and $0y^{-1}:=0$.

\begin{lemma}\label{lem:commutator-derivative}For any polynomial $q(x,y)\in A_1^k$,
\begin{align*}
[x,q(x,y)]_*=&\frac{\partial}{\partial y}q\left(x,k_0+y+k_py^p+ k_{2p}y^{2p}+\cdots\right),\\
[q(x,y),y]_*=&\frac{\partial}{\partial x}q\left(x,k_0+y+k_py^p+ k_{2p}y^{2p}+\cdots\right).
\end{align*}
\end{lemma}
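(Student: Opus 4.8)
The plan is to reduce both identities to the corresponding statements in the associative Weyl algebra $A_1$, and then transport them through the twisting map $\alpha_k$. The starting observation is that since $a*b=\alpha_k(a\cdot b)$ and $\alpha_k$ is $R$-linear, the hom-commutator is simply $\alpha_k$ applied to the ordinary commutator: $[a,b]_*=a*b-b*a=\alpha_k(a\cdot b-b\cdot a)=\alpha_k([a,b])$, exactly as already used in the proof of \autoref{lem:commuter}. Thus it suffices to show that $\alpha_k([x,q])=\frac{\partial}{\partial y}q(x,\alpha_k(y))$ and $\alpha_k([q,y])=\frac{\partial}{\partial x}q(x,\alpha_k(y))$, where I abbreviate $\alpha_k(y)=k_0+y+k_py^p+k_{2p}y^{2p}+\cdots$.

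Next I would establish the two underlying identities in $A_1$ itself, namely $[x,q]=\frac{\partial}{\partial y}q$ and $[q,y]=\frac{\partial}{\partial x}q$, where $\frac{\partial}{\partial x}$ and $\frac{\partial}{\partial y}$ are the formal operators defined just above the statement. By bilinearity of the commutator and linearity of the operators it is enough to check these on monomials $y^mx^n$. Using the defining relation $[x,y]=1_{A_1}$ together with the Leibniz rule for the commutator, a short induction gives $[x,y^m]=my^{m-1}$ and $[x^n,y]=nx^{n-1}$, whence $[x,y^mx^n]=[x,y^m]x^n=my^{m-1}x^n=\frac{\partial}{\partial y}(y^mx^n)$ and $[y^mx^n,y]=y^m[x^n,y]=ny^mx^{n-1}=\frac{\partial}{\partial x}(y^mx^n)$, as desired.

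It then remains to commute $\alpha_k$ past the formal derivatives, i.e. to prove $\alpha_k\big(\frac{\partial}{\partial x}q\big)=\frac{\partial}{\partial x}\alpha_k(q)$ and $\alpha_k\big(\frac{\partial}{\partial y}q\big)=\frac{\partial}{\partial y}\alpha_k(q)$, after which the claim follows by recalling that $\alpha_k(q)=q(x,\alpha_k(y))$ (as computed in the proof of \autoref{lem:alpha-injective}). Writing $q=\sum_{i}q_i(y)x^i$ with $q_i\in K[y]$, the statement for $\frac{\partial}{\partial x}$ is immediate: both sides equal $\sum_i i\,q_i(\alpha_k(y))x^{i-1}$, since $\alpha_k$ acts only on the coefficients $q_i$ and fixes $x$.

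The crux, and the only place where the prime characteristic genuinely enters, is the $y$-derivative. Here $\alpha_k\big(\frac{\partial}{\partial y}q\big)=\sum_i q_i'(\alpha_k(y))x^i$, while $\frac{\partial}{\partial y}\alpha_k(q)=\sum_i\frac{\partial}{\partial y}\big(q_i(\alpha_k(y))\big)x^i$, and the chain rule introduces the extra factor $\frac{\partial}{\partial y}\alpha_k(y)=1+pk_py^{p-1}+2pk_{2p}y^{2p-1}+\cdots$. The main point is that, because $K$ has characteristic $p$, every term beyond the constant $1$ carries a coefficient divisible by $p$ and hence vanishes, so $\frac{\partial}{\partial y}\alpha_k(y)=1$ and the two expressions agree. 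This is, unsurprisingly, the same mechanism that makes $\alpha_k$ commute with $\mathrm{d}/\mathrm{d}y$ in \autoref{lem:alpha-sufficient-necessary}. I expect this collapse of the chain-rule factor to be the only delicate step; everything else is routine bookkeeping.
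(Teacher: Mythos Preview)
Your proof is correct and follows essentially the same path as the paper: both reduce to the monomial identities $[x,y^mx^n]=my^{m-1}x^n$ and $[y^mx^n,y]=ny^mx^{n-1}$ in $A_1$ and then apply $\alpha_k$, using linearity to pass to arbitrary $q$. Your added step of commuting $\alpha_k$ past $\partial/\partial y$ via the chain-rule factor $\frac{\partial}{\partial y}\alpha_k(y)=1$ is a slight elaboration the paper bypasses by writing $\alpha_k(my^{m-1}x^n)=m\,\alpha_k(y)^{m-1}x^n$ directly (i.e.\ reading the right-hand side as ``differentiate, then substitute''), but the two readings coincide for exactly the reason you give.
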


\begin{proof}The proof is quite similar to the case when $K$ has characteristic zero (see Corollary 2 in \cite{BR20}). In $A_1$, $[x,y^mx^n]=x\cdot y^mx^n-y^mx^n\cdot x = \sum_{i\in\mathbb{N}}\binom{1}{i}\frac{\partial^{1-i}y^m}{\partial y^{1-i}}x^{n+i}-y^mx^{n+1}=my^{m-1}x^n$ for any $m,n\in\mathbb{N}$, where we define $0y^{-1}$ to be zero. Now, $[x,y^mx^n]_*=\alpha_k\left([x,y^mx^n]\right)=\alpha_k\left(my^{m-1}x^n\right)=m(k_0+y+k_py^p+k_{2p}y^{2p}+\cdots)^{m-1}x^n$, and so by using that $[\cdot,\cdot]_*$ is linear in the second argument, $[x,q(x,y)]_*=\frac{\partial}{\partial y}q\left(x,k_0+y+k_py^p+k_{2p}y^{2p}+\cdots\right)$. Similarly, $[y^mx^n,y]=y^mx^n\cdot y - y\cdot y^mx^n=\sum_{i\in\mathbb{N}}\binom{n}{i}\frac{\partial^{n-i}y}{\partial y^{n-i}}x^i-y^{m+1}x^n=ny^mx^{n-1}$ for any $m,n\in\mathbb{N}$ with $0x^{-1}$ defined to be zero. Hence $[y^mx^n,y]_*=\alpha_k\left([y^mx^n,y]\right)=\alpha_k\left(ny^mx^{n-1}\right)=n(k_0+y+k_py^p+k_{2p}y^{2p}+\cdots)^{m}x^{n-1}$. Using the linearity of $[\cdot,\cdot]_*$ in the first argument, $[q(x,y),y]_*=\frac{\partial}{\partial x}q\left(x,k_0+y+k_py^p+ k_{2p}y^{2p}+\cdots\right)$.
\end{proof}

\begin{lemma}$1_{A_1}$ is a unique weak left identity and a unique weak right identity in $A_1^k$.
\end{lemma}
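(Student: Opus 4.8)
The plan is to recognize this as an immediate specialization of the general Yau-twist result already established, namely \autoref{lem:unique-weak-unit}. That lemma says that whenever $A$ is a unital, associative $R$-algebra with identity $1_A$ and $\alpha\in\End_R(A)$ is injective, then $1_A$ is simultaneously a unique weak left identity and a unique weak right identity in the Yau twist $A^\alpha$. So the whole task reduces to checking that the hypotheses of that lemma are met in the present situation.

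First I would record that $A_1$ is, by its very definition in \autoref{subsec:weyl-algebra}, a unital, associative $K$-algebra with identity $1_{A_1}$, and that $A_1^k$ is by construction the Yau twist $A_1^{\alpha_k}$ of $A_1$ by the endomorphism $\alpha_k\in\End_K(A_1)$. The only remaining hypothesis to verify is the injectivity of the twisting map, and this is exactly the content of \autoref{lem:alpha-injective}. With all three conditions in hand—unitality of $A_1$, associativity of $A_1$, and injectivity of $\alpha_k$—I would then invoke \autoref{lem:unique-weak-unit} with $A=A_1$ and $\alpha=\alpha_k$ to conclude directly that $1_{A_1}$ is a unique weak left identity and a unique weak right identity in $A_1^k$.

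There is essentially no obstacle here: the argument is a one-line application of a previously proven general fact, and the real work was done earlier in isolating the Yau-twist framework (\autoref{lem:unique-weak-unit}) and in the degree computation establishing injectivity of $\alpha_k$ (\autoref{lem:alpha-injective}). If one instead wanted a self-contained proof without appealing to \autoref{lem:unique-weak-unit}, one would unwind its short argument: existence of $1_{A_1}$ as a weak identity follows from \autoref{lem:fregier-and-gohr}, and uniqueness follows because any weak left identity $e_l$ satisfies $\alpha_k(e_l)=e_l*1_{A_1}=\alpha_k(1_{A_1})$, whence $e_l=1_{A_1}$ by injectivity, and symmetrically for a weak right identity. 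But since \autoref{lem:unique-weak-unit} has already been proved in exactly this generality, the cleanest route is simply to cite it.
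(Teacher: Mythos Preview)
Your proposal is correct and follows exactly the same approach as the paper: cite injectivity of $\alpha_k$ from \autoref{lem:alpha-injective} and then apply \autoref{lem:unique-weak-unit}. The paper's proof is the one-line version of what you wrote.
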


\begin{proof}Since $\alpha_k$ is injective, this follows from \autoref{lem:unique-weak-unit}.
\end{proof}

\begin{corollary}\label{cor:Weyl-algebra-zero-divisors}There are no zero divisors in $A_1^k$.
\end{corollary}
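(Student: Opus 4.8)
The plan is to reduce the claim to the known fact, recorded in \autoref{subsec:weyl-algebra}, that the associative Weyl algebra $A_1$ is a domain, and then transport the absence of zero divisors across the Yau twist. The conceptual point is that the twisted product is $a*b=\alpha_k(a\cdot b)$, so whether a product vanishes in $A_1^k$ is controlled by whether it vanishes in $A_1$, at least as soon as $\alpha_k$ is injective: if $a*b=\alpha_k(a\cdot b)=0$ and $\alpha_k$ is injective, then $a\cdot b=0$, and conversely $a\cdot b=0$ forces $a*b=\alpha_k(0)=0$.

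First I would invoke \autoref{lem:alpha-injective}, which states that $\alpha_k$ is injective. This is exactly the hypothesis required by the equality case of \autoref{lem:zero-divisors}, and applying that lemma (in both its left and right forms) gives $D_l(A_1^k)=D_l(A_1)$ and $D_r(A_1^k)=D_r(A_1)$. Since $A_1$ contains no zero divisors, the sets $D_l(A_1)$ and $D_r(A_1)$ are trivial, whence $D_l(A_1^k)$ and $D_r(A_1^k)$ are trivial as well, and therefore $A_1^k$ has no zero divisors.

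There is essentially no obstacle here: all of the substance is carried by the two preceding lemmas, and the corollary is a one-line deduction combining them with the domain property of $A_1$. The only thing to be careful about is that \autoref{lem:zero-divisors} only yields the equality $D_l(A_1^k)=D_l(A_1)$ (rather than a mere inclusion) under the injectivity of the twisting map, so the single genuine ingredient is the appeal to \autoref{lem:alpha-injective} to supply that injectivity.
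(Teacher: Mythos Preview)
Your proof is correct and follows exactly the same approach as the paper: invoke \autoref{lem:alpha-injective} for the injectivity of $\alpha_k$, apply \autoref{lem:zero-divisors} to identify the zero divisors of $A_1^k$ with those of $A_1$, and conclude from the fact that $A_1$ is a domain. The only difference is that you spell out the mechanism more explicitly, whereas the paper compresses the deduction into a single sentence.
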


\begin{proof}Since $\alpha_k$ is injective and since there are no zero divisors in $A_1$, by \autoref{lem:zero-divisors} there are no zero divisors in $A_1^k$.
\end{proof}

\begin{corollary}\label{cor:commuter}$C(A_1^k)=K[x^p,y^p]$.
\end{corollary}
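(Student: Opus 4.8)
The plan is to reduce the statement to the general commuter lemma for Yau twists, together with Revoy's computation of the commuter of the associative Weyl algebra. The key observation is structural: $A_1^k$ is by construction the Yau twist $A_1^{\alpha_k}$ of the associative algebra $A_1$ along the endomorphism $\alpha_k$, so there is no need to recompute the commuter from scratch in the hom-associative setting. Instead, the computation transports cleanly from $A_1$ to $A_1^k$ provided the relevant injectivity hypothesis is in place.

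First I would recall that $\alpha_k$ is injective, which is exactly \autoref{lem:alpha-injective}. This is precisely the hypothesis needed to upgrade the inclusion in \autoref{lem:commuter} to an equality. Applying \autoref{lem:commuter} with $A = A_1$ and $\alpha = \alpha_k$ then yields $C(A_1^k) = C(A_1)$; here it is essential that $A_1$ is associative, so that $A_1^k$ really is a Yau twist of an associative algebra and hence falls under the scope of that lemma.

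Second, I would invoke Revoy's result, recalled in \autoref{subsec:weyl-algebra}, that $C(A_1) = K[x^p, y^p]$. Chaining the two equalities gives $C(A_1^k) = C(A_1) = K[x^p, y^p]$, which is the desired conclusion.

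The main point to watch is therefore not any new computation but the bookkeeping verification that the hypotheses of \autoref{lem:commuter} genuinely apply: namely that $\alpha_k$ is an injective endomorphism of the \emph{associative} algebra $A_1$. Both ingredients are already available, since \autoref{lem:alpha-sufficient-necessary} guarantees that $\alpha_k$ is a well-defined endomorphism commuting with $\mathrm{d}/\mathrm{d}y$, and \autoref{lem:alpha-injective} supplies its injectivity. Consequently the argument is essentially a one-line combination of the preceding results, and I do not anticipate any serious obstacle.
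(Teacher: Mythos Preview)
Your proposal is correct and follows essentially the same argument as the paper: both invoke the injectivity of $\alpha_k$ from \autoref{lem:alpha-injective}, apply \autoref{lem:commuter} to conclude $C(A_1^k)=C(A_1)$, and then use Revoy's result $C(A_1)=K[x^p,y^p]$ recalled in \autoref{subsec:weyl-algebra}. The additional bookkeeping you mention regarding $\alpha_k$ being an endomorphism is already implicit in the definition of $A_1^k$, but it does no harm to make it explicit.
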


\begin{proof}We know that $C(A_1)=K[x^p,y^p]$. Since $\alpha_k$ is injective,  $C(A_1^k)=C(A_1)$ by \autoref{lem:commuter}. 
\end{proof}

In characteristic zero, $A_1^k$ is simple (cf. Proposition 5.14 in \cite{BRS18}). The next proposition demonstrates that in prime characteristic, this is not the case.

\begin{proposition}\label{prop:non-simple}
$A_1^k$ is not simple.	
\end{proposition}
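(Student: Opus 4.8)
The plan is to exhibit a concrete nonzero proper two-sided ideal of $A_1^k$, using an element that is central only in prime characteristic. The natural candidate is the ideal built from $x^p$. First I would record that $\alpha_k$ is a ring endomorphism of the associative algebra $A_1$ fixing $x$, so that $\alpha_k(x^p)=x^p$, and that $x^p$ is central in $A_1$, since $[x^p,y]=px^{p-1}=0$ in characteristic $p$ (this is also contained in \autoref{cor:commuter}). Consequently $I:=x^p\cdot A_1=A_1\cdot x^p$ is a genuine two-sided ideal of the associative Weyl algebra. Concretely, as $x^p$ is central one has $x^p\cdot\sum_i q_i(y)x^i=\sum_i q_i(y)x^{p+i}$, so $I$ is exactly the set of polynomials all of whose monomials have $x$-degree at least $p$; in particular $I$ is a $K$-subspace containing $x^p\neq 0$ but not $1_{A_1}$.

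Next I would observe that $I$ is invariant under $\alpha_k$. Indeed, $\alpha_k$ sends a monomial $r(y)x^j$ to $r(\alpha_k(y))x^j$ and hence preserves the $x$-degree, so $\alpha_k(I)\subseteq I$. The crucial step is then the general principle that an $\alpha$-invariant two-sided ideal of an associative algebra $A$ remains a two-sided ideal of the Yau twist $A^\alpha$: for $a\in A_1$ and $b\in I$ one computes $a*b=\alpha_k(a\cdot b)$, and since $I$ is a left ideal for the associative product, $a\cdot b\in I$, whence $a*b=\alpha_k(a\cdot b)\in\alpha_k(I)\subseteq I$; the right-sided absorption $b*a\in I$ follows identically using that $I$ is a right ideal. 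Because $*$ is $K$-bilinear and $I$ is a $K$-subspace, this suffices to conclude $A_1^k*I\subseteq I$ and $I*A_1^k\subseteq I$.

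Finally, since $I$ is nonzero and proper, it is a two-sided ideal of $A_1^k$ different from both $\{0\}$ and $A_1^k$, so $A_1^k$ is not simple. I expect the only genuinely delicate point to be the passage from associative to Yau-twisted ideals: one must resist taking the ideal generated by $x^p$ under the twisted product $*$, which (since $\alpha_k$ need not be surjective by \autoref{lem:alpha-surjective}) could be strictly smaller, and instead verify directly that the associative ideal $x^p\cdot A_1$ is absorbed under $*$ from both sides. The choice of $x^p$ rather than $y^p$ is what makes the $\alpha_k$-invariance transparent, since $\alpha_k(y^p)=k_0^p+y^p+\cdots$ acquires a nonzero constant term when $k_0\neq 0$ and would not obviously preserve $y^p\cdot A_1$.
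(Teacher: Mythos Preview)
Your proof is correct and shares the paper's key idea---exploit the centrality of $x^p$---but the execution differs in a way worth noting. The paper works directly in $A_1^k$: it takes the left ideal of $A_1^k$ generated by $x^p$, uses $x^p\in C(A_1^k)$ to conclude it is two-sided, and then observes that any nonzero element $q*x^p=\alpha_k(q)\cdot x^p$ has degree at least $p$, so the ideal is proper. You instead take the associative ideal $I=x^p\cdot A_1$, check that it is $\alpha_k$-invariant (because $\alpha_k$ preserves the $x$-degree of monomials), and invoke the general principle that an $\alpha$-invariant two-sided ideal of $A$ is automatically a two-sided ideal of the Yau twist $A^\alpha$. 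Your route has the advantage of isolating a reusable lemma about Yau twists and of making properness immediate (since $1_{A_1}\notin I$ is obvious from the $x$-degree description). Your closing caveat is slightly overstated, however: even though the $*$-generated ideal may indeed be strictly smaller than $x^p\cdot A_1$ when $\alpha_k$ is not surjective, it is still nonzero and---being contained in your $I$---still proper, so the paper's choice does not cause any difficulty for non-simplicity.
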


\begin{proof}
By \autoref{cor:commuter}, $x^p\in C(A_1^k)$, so the left ideal $I$ of $A_1^k$ generated by $x^p$ is also a right ideal. If $i\in I$ is nonzero, then $i=q*x^p=\alpha_k(q\cdot x^p)=\alpha_k(q)\cdot x^p$ for some nonzero $q$, so $\deg(i)=\deg(\alpha_k(q))+p\geq p$. Hence $I$ does not contain all elements of $A_1^k$, and since $I$ is nonzero, $A_1^k$ is not simple.
\end{proof}

\begin{lemma}\label{lem:associator}
For any $q,r,s \in A_1^k$,
$(q,r,s)_* =0 \iff q \cdot \alpha_k(r\cdot s) = \alpha_k(q\cdot r)\cdot s$.
\end{lemma}

\begin{proof}
 $(q,r,s)_* = q*(r*s)-(q*r)*s = \alpha_k(q\cdot \alpha_k(r\cdot s))-\alpha_k(\alpha_k(q\cdot r) \cdot s) = \\
 \alpha_k(q\cdot \alpha_k(r\cdot s) -\alpha_k(q\cdot r)\cdot s)$.
Since $\alpha_k$ is injective, the lemma follows. 
\end{proof}

\begin{proposition}\label{prop:power-assoc}
$A_1^k$ is power associative if and only if $k=0$.	
\end{proposition}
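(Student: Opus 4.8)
The plan is to read off power associativity from the associator formula in \autoref{lem:associator}. Putting $q=r=s=a$ there, $A_1^k$ is power associative if and only if
\[
a\cdot\alpha_k(a\cdot a)=\alpha_k(a\cdot a)\cdot a,\qquad\text{that is,}\qquad \bigl[a,\alpha_k(a^2)\bigr]=0\ \text{ in }A_1,
\]
for every $a\in A_1$, the bracket denoting the ordinary associative commutator. One direction is immediate: if $k=0$ then $\alpha_k=\mathrm{id}_{A_1}$, so $A_1^0=A_1$ is associative, and hence power associative.

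For the converse I would argue contrapositively, exhibiting for each $k\neq0$ a single element $a$ with $\bigl[a,\alpha_k(a^2)\bigr]\neq0$. Some care is needed in the choice, since the obvious candidates degenerate in small characteristic: $a=x+y$ gives commutator $0$ identically, while $a=x+y^2$ produces a commutator carrying an overall factor $4$, which disappears when $p=2$. I would instead use the genuinely mixed element $a=yx$.

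The computation takes place in $A_1$. Writing $Y:=\alpha_k(y)=k_0+y+k_py^p+k_{2p}y^{2p}+\cdots$ and using $xy=yx+1$, one finds $a^2=y^2x^2+yx$, whence $\alpha_k(a^2)=Y^2x^2+Yx$. I would then combine the derivation identity $[yx,\cdot]=y[x,\cdot]+[y,\cdot]\,x$ with $[x,q]=\frac{\partial}{\partial y}q$ and $[y,q]=-\frac{\partial}{\partial x}q$ (both computed within the proof of \autoref{lem:commutator-derivative}), together with the crucial fact that $Y'=1$, because $\frac{\mathrm{d}}{\mathrm{d}y}y^{jp}=0$ in characteristic $p$. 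A brief calculation then yields
\[
\bigl[a,\alpha_k(a^2)\bigr]=(y-Y)\bigl(2Yx^2+x\bigr).
\]

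It remains to check that the right-hand side is nonzero exactly when $k\neq0$. The first factor $y-Y=-\bigl(k_0+k_py^p+k_{2p}y^{2p}+\cdots\bigr)$ is nonzero if and only if $k\neq0$, while the second factor $2Yx^2+x$ is nonzero in every characteristic, since its coefficient of $x^1$ equals $1$ even when $p=2$ annihilates the term $2Yx^2$. Because $A_1$ has no zero divisors by \autoref{cor:Weyl-algebra-zero-divisors}, the product is nonzero, so $(a,a,a)_*\neq0$ and $A_1^k$ fails to be power associative. The main obstacle, and the only genuinely delicate point, is selecting a witness whose square commutator survives in all characteristics, thereby avoiding the spurious factors of $2$, $3$, and $4$ that annihilate the simpler candidates for small $p$.
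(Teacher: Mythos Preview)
Your proof is correct and follows essentially the same approach as the paper: both use the witness $a=yx$, compute $\alpha_k(a^2)=\alpha_k(y)^2x^2+\alpha_k(y)x$, and reduce to the equation $2y\alpha_k(y)x^2+yx=2\alpha_k(y)^2x^2+\alpha_k(y)x$, which you have simply rewritten in the factored form $(y-Y)(2Yx^2+x)=0$ and then dispatched via the domain property. One small citation fix: the fact that $A_1$ has no zero divisors is stated in \autoref{subsec:weyl-algebra}, whereas \autoref{cor:Weyl-algebra-zero-divisors} concerns $A_1^k$.
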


\begin{proof}If $k=0$, then $A_1^k$ is associative, and hence also power associative. To show the converse, we note that it follows from \autoref{lem:associator} that $(yx,yx,yx)_*=0$ if and only if $yx \cdot \alpha_{k}((yx)\cdot (yx)) = \alpha_{k}((yx)\cdot (yx)) \cdot yx$. But 
\begin{align*}
&&& yx \cdot \alpha_{k}((yx)\cdot (yx)) = \alpha_{k}((yx)\cdot (yx)) \cdot yx &\\
&&\iff &yx \cdot \alpha_{k}(y^2x^2+yx) = \alpha_{k}(y^2x^2+yx)\cdot yx \\ 
&& \iff &yx \cdot (\alpha_k(y)^2\cdot x^2+\alpha_k(y)\cdot x) = (\alpha_{k}(y)^2\cdot x^2+\alpha_{k}(y)\cdot x)\cdot yx \\
&& \iff &y\cdot\alpha_{k}(y)^2\cdot x^3+2y\cdot\alpha_{k}(y)\cdot x^2+y\cdot\alpha_{k}(y)\cdot x^2+yx \\
&&&= y\cdot\alpha_{k}(y)^2\cdot x^3+2\alpha_{k}(y)^2\cdot x^2+y\cdot\alpha_{k}(y)\cdot x^2+\alpha_k(y)\cdot x \\
&& \iff & 2y\cdot\alpha_{k}(y)\cdot x^2+yx = 2\alpha_{k}(y)^2\cdot x^2 +\alpha_{k}(y)\cdot x.
\end{align*}
The last equality clearly holds only if $k=0$. 
\end{proof}

From the above result, we can also conclude that $A_1^k$ is left alternative, right alternative, flexible, and associative if and only if $k=0$ (cf. \autoref{subsec:prel-non-assoc}).

\begin{proposition}\label{prop:right-nucleus}
$N_l(A_1^k) = N_m(A_1^k) = N_r(A_1^k)=\{0\}$ if and only if $k\neq0$.
\end{proposition}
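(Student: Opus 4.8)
The plan is to prove the two directions separately, the reverse implication being the substantial one. For the forward direction I would argue by contrapositive: if $k=0$ then $A_1^k=A_1$ is associative, so every associator $(q,r,s)_*$ vanishes and each of $N_l(A_1^k),N_m(A_1^k),N_r(A_1^k)$ equals all of $A_1$, which is certainly not $\{0\}$. Since $0$ lies in every nucleus regardless (an associator with a zero slot vanishes), the whole content of the statement is to show that when $k\neq0$ no \emph{nonzero} element can sit in any of the three nuclei.

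The main tool would be \autoref{lem:associator}, which rewrites the vanishing of $(q,r,s)_*$ as the identity $q\cdot\alpha_k(r\cdot s)=\alpha_k(q\cdot r)\cdot s$ \emph{in the associative algebra} $A_1$. This is crucial: it moves the entire computation into $A_1$, where I may use that $A_1$ is a domain, that $\alpha_k$ is a unital algebra endomorphism (so $\alpha_k(1_{A_1})=1_{A_1}$ and $\alpha_k$ is multiplicative), and that $\alpha_k$ is injective (\autoref{lem:alpha-injective}). The one observation that detects $k\neq0$ is that $\alpha_k(y)-y=k_0+k_py^p+k_{2p}y^{2p}+\cdots$ is a nonzero element of $A_1$ exactly when $k\neq0$; this nonzero element is what I will cancel against using the domain property.

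Concretely, I would treat each nucleus by feeding well-chosen arguments into the rewritten associator identity. For a candidate $s\in N_r(A_1^k)$, setting $q=r=1_{A_1}$ forces $\alpha_k(s)=s$, and then setting $r=1_{A_1}$ with general $q$ gives $(q-\alpha_k(q))\cdot s=0$; taking $q=y$ and invoking that $A_1$ is a domain yields $s=0$. For a candidate $q\in N_l(A_1^k)$, setting $r=s=1_{A_1}$ forces $\alpha_k(q)=q$, and then $r=1_{A_1}$ with general $s$ gives $q\cdot(\alpha_k(s)-s)=0$; taking $s=y$ forces $q=0$. For a candidate $r\in N_m(A_1^k)$, setting $q=1_{A_1}$ together with multiplicativity of $\alpha_k$ gives $\alpha_k(r)\cdot(\alpha_k(s)-s)=0$ for all $s$; taking $s=y$ forces $\alpha_k(r)=0$, whence $r=0$ by injectivity of $\alpha_k$. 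Combining the three cases gives $N_l(A_1^k)=N_m(A_1^k)=N_r(A_1^k)=\{0\}$.

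I do not anticipate a deep obstacle; the delicate point is purely organizational, namely selecting the right specializations so that the universally quantified associator condition collapses to a single relation of the form (fixed element)$\,\cdot\,$(candidate)$\,=0$ or (candidate)$\,\cdot\,$(nonzero)$\,=0$. Keeping the arithmetic inside $A_1$ via \autoref{lem:associator}, rather than in the non-associative $A_1^k$, is what makes the cancellation legitimate, since it is the associative product for which the no-zero-divisor property is being applied. A minor care point is the middle nucleus, which unlike the other two relies on multiplicativity and injectivity of $\alpha_k$ rather than on a fixed-point relation.
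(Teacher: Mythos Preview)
Your proposal is correct and follows essentially the same approach as the paper: both reduce via \autoref{lem:associator} to identities in the associative domain $A_1$, then specialize one or two slots to $1_{A_1}$ and $y$ so that the factor $\alpha_k(y)-y$ appears and can be cancelled. Your version is marginally cleaner---the paper expands $r=\sum_i r_i x^i$ and compares coefficients for the right nucleus, whereas you get $(y-\alpha_k(y))\cdot s=0$ directly and invoke the domain property---and your middle-nucleus specialization $(1,r,s)$ differs from the paper's $(y,s,1)$, but the underlying idea is identical.
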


\begin{proof}
If $k=0$, then $A_1^k$ is associative, so $N_l(A_1^k)=N_m(A_1^k)=N_r(A_1^k)=A_1^k$. Now, assume $k\neq0$. Let $r = \sum_{i\in\mathbb{N}} r_i x^i$ be some element in $N_r(A_1^k)$ where $r_i \in K[y]$. Then, $(1_{A_1},1_{A_1},r)_*=0$. By \autoref{lem:associator}, this is equivalent to $\alpha_k(r) = r\iff\alpha_k(r_i)=r_i$ for all $i\in\mathbb{N}$, so $r_i\in K$. We also have $(y,1_{A_1},r)=0$, which is equivalent to $y\cdot\alpha_k(r)=\alpha_k(y)\cdot r$. We first rewrite the right-hand side of the preceding equation: $\alpha_k(y) \cdot r = \sum_{i\in\mathbb{N}} r_i\alpha_k(y)x^i$. Then we rewrite the left-hand side: 
$y \cdot \alpha_k(r)=y\cdot r= \sum_{i\in\mathbb{N}} r_iyx^i$. Hence, for all $i\in\mathbb{N}$, it must be true that 
$r_i\cdot\alpha_k(y)=r_iy$. Since $k\neq0$ implies $\alpha_k(y)\neq y$, we have $r_i=0$ for all $i\in\mathbb{N}$. In other words, $r=0$, so $N_r(A_1^k)\subseteq\{0\}$. Since $\{0\}\subseteq N_r(A_1^k)$, we have $N_r(A_1^k)=\{0\}$. 

Assume $k\neq0$ and let $s=\sum_{i\in\mathbb{N}}s_ix^i$ be some element in $N_m(A_1^k)$ where $s_i\in K[y]$. Then, $(y,s,1_{A_1})_*=0$. By \autoref{lem:associator}, this is equivalent to $y\cdot\alpha_k(s)=\alpha_k(y)\cdot\alpha_k(s)$. This can be rewritten as $y\cdot\alpha_k(s_i)=\alpha_k(y)\cdot\alpha_k(s_i)\iff(y-\alpha_k(y))\cdot\alpha_k(s_i)=0$ for all $i\in\mathbb{N}$. Again, $k\neq0$ implies $\alpha_k(y)\neq y$, and by \autoref{cor:Weyl-algebra-zero-divisors}, there are no zero divisors in $A_1^k$. Hence $\alpha_k(s_i)=0$ for all $i\in\mathbb{N}$. By \autoref{lem:alpha-injective}, $\alpha_k$ is injective, and so $s_i=0$ for all $i\in\mathbb{N}$, and $N_m(A_1^k)=\{0\}$. 

By similar calculations, $N_l(A_1^k)=\{0\}$.
\end{proof}

\begin{corollary}$N(A_1^k)=\{0\}$ if and only if $k\neq0$.	
\end{corollary}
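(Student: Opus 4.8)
The plan is to derive this directly from the definition of the nucleus, $N(A_1^k) := N_l(A_1^k) \cap N_m(A_1^k) \cap N_r(A_1^k)$, together with the preceding \autoref{prop:right-nucleus}, which already settles the behaviour of each of the three one-sided nuclei. Since all the substantive work has been carried out there, the corollary reduces to a short set-theoretic observation together with a glance at the associative case.

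First I would treat the case $k \neq 0$. By \autoref{prop:right-nucleus} we have $N_l(A_1^k) = N_m(A_1^k) = N_r(A_1^k) = \{0\}$, so their intersection is again $\{0\}$; hence $N(A_1^k) = \{0\}$. This gives the implication $k \neq 0 \implies N(A_1^k) = \{0\}$.

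For the converse I would argue by contraposition and suppose $k = 0$. Then $A_1^k = A_1$ is associative, so by definition every element lies in each of the three one-sided nuclei, giving $N_l(A_1^k) = N_m(A_1^k) = N_r(A_1^k) = A_1^k$ and therefore $N(A_1^k) = A_1^k$. Since $A_1^k$ contains nonzero elements—for instance $1_{A_1}$—we conclude $N(A_1^k) \neq \{0\}$, which is the contrapositive of the remaining implication.

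There is no genuine obstacle here: this corollary is merely the remark that the intersection of three copies of $\{0\}$ is $\{0\}$, while in the associative case the nucleus exhausts the algebra. The only point worth stating explicitly is that, in the $k = 0$ case, one must exhibit a nonzero element of $A_1^k$ in order to rule out the degenerate possibility $N(A_1^k) = \{0\}$.
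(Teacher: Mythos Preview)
Your proof is correct and follows exactly the route the paper intends: the corollary is stated there without proof precisely because it is immediate from \autoref{prop:right-nucleus} and the definition $N(A_1^k)=N_l(A_1^k)\cap N_m(A_1^k)\cap N_r(A_1^k)$. Your handling of the $k=0$ direction by contraposition, noting that $A_1^k$ is then associative and contains a nonzero element, is the right way to make that implication explicit.
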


\begin{corollary}\label{cor:center}$Z(A_1^k)=\{0\}$ if and only if $k\neq0$.	
\end{corollary}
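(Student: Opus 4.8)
The plan is to read off the center directly from its definition $Z(A) := C(A)\cap N(A)$ (see \autoref{subsec:prel-non-assoc}), feeding in the two pieces we already have in hand: the commuter was computed in \autoref{cor:commuter} as $C(A_1^k)=K[x^p,y^p]$, and the nucleus was determined in the corollary immediately preceding this one (itself a consequence of \autoref{prop:right-nucleus}) to satisfy $N(A_1^k)=\{0\}$ precisely when $k\neq0$. Since the center is an intersection of these two sets, the dichotomy for the nucleus should transfer almost verbatim to the center.

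Concretely, for the direction $k\neq0\implies Z(A_1^k)=\{0\}$, I would argue that
\[
Z(A_1^k)=C(A_1^k)\cap N(A_1^k)\subseteq N(A_1^k)=\{0\},
\]
where the last equality is the preceding corollary; as $0$ is trivially both central and in the nucleus, this forces $Z(A_1^k)=\{0\}$. For the reverse direction I would take the contrapositive: if $k=0$, then $A_1^0=A_1$ is the associative Weyl algebra, so $N(A_1^0)=A_1$ and hence $Z(A_1^0)=C(A_1^0)=K[x^p,y^p]$, which is certainly nonzero (it already contains $1_{A_1}$). Thus $Z(A_1^k)=\{0\}$ fails when $k=0$.

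I do not expect any genuine obstacle here: the statement is a formal corollary obtained by intersecting the commuter with the nucleus, and all the substantive work has already been carried out in \autoref{cor:commuter} and \autoref{prop:right-nucleus}. The only point worth a moment's care is to note explicitly that $0$ always lies in $Z(A_1^k)$, so that the inclusion $Z(A_1^k)\subseteq N(A_1^k)=\{0\}$ yields equality rather than merely containment in the $k\neq0$ case, and to invoke associativity (hence $N(A_1)=A_1$) in the $k=0$ case so that the center reduces to the full commuter.
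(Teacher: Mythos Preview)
Your proposal is correct and matches the paper's approach: the paper states this as an immediate corollary (with no written proof) of the preceding corollary on the nucleus together with the definition $Z(A_1^k)=C(A_1^k)\cap N(A_1^k)$, which is exactly the argument you give.
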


\begin{lemma}\label{lem:derivation-unit}
The following statements are equivalent:
\begin{enumerate}[label=(\roman*)]
	\item $\delta\in\Der_K(A_1^k)$.\label{it:derOne}
	\item $\delta\in C_{\Der_K(A_1)}(\alpha_k)$.\label{it:derTwo}
	\item $\delta\in\Der_K(A_1)$ satisfying $\alpha_k(\delta(x))=\delta(x)$ and $\alpha_k(\delta(y))=\delta(y)+k_p\delta(y^p)+k_{2p}\delta(y^{2p})+\cdots$.\label{it:derThree}
	\item $\delta=uE_x+vE_y+\ad_q$, where $E_x,E_y\in \Der_K(A_1)$ are defined by $E_x(x)=y^{p-1}$, $E_x(y)=E_y(x)=0$, $E_y(y)=x^{p-1}$, and $u,v\in K[x^p,y^p]$ and $q\in A_1$ satisfy $\alpha_k(u)=u$, $\alpha_k(v)=v$, $\frac{\partial}{\partial x}\left(\alpha_k(q)-q\right)=v\cdot\frac{\mathrm{d}}{\mathrm{d}(y^p)}(y-\alpha_k(y))$, and $\frac{\partial}{\partial y}\left(\alpha_k(q)-q\right)=u\cdot(\alpha_k(y)^{p-1}-y^{p-1})$.\label{it:derFour}
\end{enumerate}
\end{lemma}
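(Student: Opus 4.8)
The statement bundles a transfer result (\autoref{lem:derivations}) together with the explicit description of $\Der_K(A_1)$ from \autoref{thm:BLO15}, so the plan is to prove the three equivalences (i)$\iff$(ii), (ii)$\iff$(iii), and (iii)$\iff$(iv) separately. For (i)$\iff$(ii): since $A_1$ is unital and $\alpha_k$ is injective (\autoref{lem:alpha-injective}), \autoref{lem:derivations} reduces the claim to showing $\delta(1_{A_1})=0$ for every $\delta\in\Der_K(A_1^k)$. As $\alpha_k$ is unital, $1_{A_1}$ is idempotent in $A_1^k$, i.e. $1_{A_1}*1_{A_1}=\alpha_k(1_{A_1})=1_{A_1}$; applying $\delta$ and the weak-identity relations $1_{A_1}*a=a*1_{A_1}=\alpha_k(a)$ yields $d=2\alpha_k(d)$ for $d:=\delta(1_{A_1})$. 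I would deduce $d=0$ by a short case split: if $p=2$ the coefficient $2$ vanishes; if $k=(k_0,0,0,\ldots)$ then $\alpha_k^p=\mathrm{id}_{A_1}$, so iterating gives $(2^p-1)d=0$ with $2^p-1\equiv 1\pmod p$ by Fermat's little theorem; and otherwise $\alpha_k$ strictly increases the $y$-degree of every monomial involving $y$, forcing $d\in K[x]$ and hence $\alpha_k(d)=d$, so $d=2d$. In all cases $d=0$.

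For (ii)$\iff$(iii), the only content is that $\delta\circ\alpha_k=\alpha_k\circ\delta$ holds on all of $A_1$ as soon as it holds on the generators $x,y$. I would establish this by checking that $\theta:=\alpha_k\circ\delta-\delta\circ\alpha_k$ is an $\alpha_k$-twisted derivation, $\theta(ab)=\theta(a)\alpha_k(b)+\alpha_k(a)\theta(b)$, so that if $\theta$ kills $x$, $y$, and the scalars, an induction on the length of monomials in $x,y$ forces $\theta=0$. Evaluating $\theta(x)=0$ and $\theta(y)=0$ with $\alpha_k(x)=x$, $\alpha_k(y)=k_0+y+k_py^p+\cdots$, and the $K$-linearity of $\delta$ then returns exactly the two relations displayed in (iii).

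The equivalence (iii)$\iff$(iv) is the heart of the argument and the main obstacle. Using \autoref{thm:BLO15} I would write $\delta=uE_x+vE_y+\ad_q$ with uniquely determined $u,v\in K[x^p,y^p]$ and some $q\in A_1$, and compute $\delta(x)=uy^{p-1}-\frac{\partial q}{\partial y}$ and $\delta(y)=vx^{p-1}+\frac{\partial q}{\partial x}$ from $[x,q]=\frac{\partial q}{\partial y}$ and $[q,y]=\frac{\partial q}{\partial x}$ (the $k=0$ case of \autoref{lem:commutator-derivative}). Three characteristic-$p$ facts drive the reduction: $\alpha_k$ commutes with both $\frac{\partial}{\partial x}$ and $\frac{\partial}{\partial y}$ (the latter because $\frac{\mathrm d}{\mathrm d y}\alpha_k(y)=1$, cf. \autoref{lem:alpha-sufficient-necessary}); $y^p$ is central, so $\delta(y^{jp})=jy^{(j-1)p}\delta(y^p)$ and hence $\sum_{j}k_{jp}\delta(y^{jp})=\frac{\mathrm d}{\mathrm d(y^p)}\big(\alpha_k(y)-y\big)\cdot\delta(y^p)$; and, by a direct normal-ordering computation together with Wilson's theorem, $E_y(y^p)=(p-1)!\equiv-1$, whence $\delta(y^p)=-v$. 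Substituting these, condition A of (iii) becomes $\alpha_k(u)\alpha_k(y)^{p-1}-uy^{p-1}=\frac{\partial}{\partial y}(\alpha_k(q)-q)$ and condition B becomes $(\alpha_k(v)-v)x^{p-1}+\frac{\partial}{\partial x}(\alpha_k(q)-q)=v\,\frac{\mathrm d}{\mathrm d(y^p)}\big(y-\alpha_k(y)\big)$.

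It then remains to split each of these single equations into the corresponding two conditions of (iv), and this is where I expect the genuine work. The device is to grade $A_1$ by the residue of the $x$-exponent (respectively $y$-exponent) modulo $p$: the operator $\frac{\partial}{\partial x}$ (respectively $\frac{\partial}{\partial y}$) produces no monomial whose $x$-exponent (respectively $y$-exponent) is $\equiv p-1\pmod p$, whereas $x^{p-1}$ and $y^{p-1}$ lie exactly in that class, and writing $\alpha_k(y)=y+c$ with $c\in K[y^p]$ and expanding shows that the $y$-exponent-$\equiv p-1$ part of $\alpha_k(y)^{p-1}$ is precisely $y^{p-1}$. Comparing the top residue class in the two equations isolates $(\alpha_k(u)-u)y^{p-1}=0$ and $(\alpha_k(v)-v)x^{p-1}=0$, hence $\alpha_k(u)=u$ and $\alpha_k(v)=v$; feeding these back collapses the two equations to the remaining two derivative conditions of (iv). The converse direction (iv)$\Rightarrow$(iii) is then immediate by substituting the four conditions back into conditions A and B.
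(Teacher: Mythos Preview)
Your proposal is correct, and the overall architecture (the three pairwise equivalences) matches the paper's. The details, however, differ in two places.

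For (i)$\iff$(ii), the paper also reduces to $\delta(1_{A_1})=0$ via \autoref{lem:derivations}, but then argues by writing $\delta(1_{A_1})=\sum d_{ij}y^ix^j$ and comparing both sides of $d=2\alpha_k(d)$ term by term. Your case split is tidier: when $k=(k_0,0,\ldots)$ you exploit $\alpha_k^p=\mathrm{id}_{A_1}$ and Fermat's little theorem, and otherwise you use that $\alpha_k$ strictly raises $y$-degree. Both are valid; yours avoids the coefficient bookkeeping.

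The substantive divergence is in (iii)$\iff$(iv). The paper does \emph{not} separate the two conditions of (iii) by a residue-class grading. Instead, having already shown (iii)$\Rightarrow$(ii), it applies the full commutation $\delta\circ\alpha_k=\alpha_k\circ\delta$ to $x^p$ and $y^p$: since $\delta(x^p)=-u$ and $\delta(y^p)=-v$ (this is where the paper invokes \cite[Lemma~3.6]{BLO15}, giving $E_x=-\mathrm{d}/\mathrm{d}(x^p)$ on $K[x^p]$ and $E_y=-\mathrm{d}/\mathrm{d}(y^p)$ on $K[y^p]$), one reads off $\alpha_k(u)=u$ and $\alpha_k(v)=v$ directly, and the two derivative identities then fall out of evaluating (iii) on $x$ and $y$. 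Your route---grading by the residue of the $x$- or $y$-exponent modulo $p$ and noting that $\partial/\partial x$ (resp.\ $\partial/\partial y$) never hits residue $p-1$---is a genuine alternative. It is a bit more computational but has the virtue of staying entirely within the two equations of (iii); the paper's route is shorter once one is willing to reuse the commutation on higher-degree central elements. Your observation that the residue-$(p-1)$ part of $\alpha_k(y)^{p-1}=(y+c)^{p-1}$ with $c\in K[y^p]$ is exactly $y^{p-1}$ is the key step that makes the grading argument close, and it is correct.
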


\begin{proof}
\ref{it:derOne}$\iff$\ref{it:derTwo}:
We wish to show that $\Der_K(A_1^k)=C_{\Der_K(A_1)}(\alpha_k)$. Since $\alpha_k$ is injective, this holds if and only if $\delta(1_{A_1})=0$ for all $\delta\in\Der_K(A_1^k)$ by \autoref{lem:derivations}. Now, we have that $\delta(1_{A_1})=\delta(\alpha_k(1_{A_1}))=\delta(1_{A_1}*1_{A_1})=\delta(1_{A_1})*1_{A_1}+1_{A_1}*\delta(1_{A_1})=2\alpha_k(\delta(1_{A_1})$. Hence, if $p=2$, then $\delta(1_{A_1})=0$. Therefore, assume $p\neq2$ and put $\delta(1_{A_1})=\sum_{i\in\mathbb{N}}\sum_{j\in\mathbb{N}}d_{ij}y^ix^j$ for $d_{ij}\in K$. The previous equation then reads $\sum_{i\in\mathbb{N}}\sum_{j\in\mathbb{N}}d_{ij}y^ix^j=\sum_{i\in\mathbb{N}}\sum_{j\in\mathbb{N}}2d_{ij}\alpha_k(y)^i x^j$, so $\sum_{i\in\mathbb{N}}d_{ij}y^i=\sum_{i\in\mathbb{N}}2d_{ij}\alpha_k(y)^i$ for all $j\in\mathbb{N}$. Now, $\alpha_k(y)=k_0+y+k_py^p+k_{2p}y^{2p}+\cdots$, so by comparing the degrees of the two sums, $k_p=k_{2p}=k_{3p}=\ldots=0$ unless all $d_{ij}$ are zero. If $k_p=k_{2p}=k_{3p}=\ldots=0$ we are left with  the equation $\sum_{i\in\mathbb{N}}d_{ij}y^i=\sum_{i\in\mathbb{N}}2d_{ij}(k_0+y)^i$, which has no solution unless  $d_{ij}=0$ for all $i,j\in\mathbb{N}$.  So $\delta(1_{A_1})=0$, and therefore $\Der_K(A_1^k)=C_{\Der_K(A_1)}(\alpha_k)$.

\ref{it:derTwo}$\iff$\ref{it:derThree}: Since both $\alpha_k$ and $\delta$ are linear, \ref{it:derTwo} is equivalent to $\alpha_k(\delta(y^mx^n))=\delta(\alpha_k(y^mx^n))$ for $m,n\in\mathbb{N}$. Now, we claim that this in turn is equivalent to $\alpha_k(\delta(x))=\delta(\alpha_k(x))$ and $\alpha_k(\delta(y))=\delta(\alpha_k(y))$. Clearly the former condition implies the latter condition. We wish to show, by induction over $m$ and $n$, that also the latter condition implies the former condition. To this end, assume that $\alpha_k(\delta(x))=\delta(\alpha_k(x))$ and $\alpha_k(\delta(y))=\delta(\alpha_k(y))$. First, $\alpha_k(\delta(1_{A_1}))=\alpha_k(0)=0$ and $\delta(\alpha_k(1_{A_1}))=\delta(1_{A_1})=0$, so the base case $m=n=0$ holds. Now, let $n$ be fixed, and assume that the induction hypothesis holds. Then,
\begin{align*}
&\alpha_k(\delta(y^{m+1}x^n))=\alpha_k(\delta(y)\cdot y^mx^n+y\cdot\delta(y^mx^n))\\
&=\alpha_k(\delta(y))\cdot\alpha_k(y^mx^n)+\alpha_k(y)\cdot\alpha_k(\delta(y^mx^n))\\
&=\delta(\alpha_k(y))\cdot\alpha_k(y^mx^n)+\alpha_k(y)\cdot\delta(\alpha_k(y^mx^n))\\
&=\delta(\alpha_k(y)\cdot\alpha_k(y^mx^n))=\delta(\alpha_k(y^{m+1}x^n)).
\end{align*}
Similarly, $\alpha_k(\delta(y^mx^{n+1}))=\delta(\alpha_k(y^mx^{n+1}))$ whenever $m$ is fixed. By using that $\delta(\alpha_k(x))=\delta(x)$ and $\delta(\alpha_k(y))=k_0\delta(1_{A_1})+\delta(y)+k_p\delta(y^p)+k_{2p}\delta(y^{2p})+\cdots=\delta(y)+k_p\delta(y^p)+k_{2p}\delta(y^{2p})+\cdots$, the end result follows. 

\ref{it:derThree}$\implies$\ref{it:derFour}: Let $\delta\in\Der_K(A_1)$, and assume $\alpha_k(\delta(x))=\delta(x)$ and $\alpha_k(\delta(y))=\delta(y)+k_p\delta(y^p)+k_{2p}\delta(y^{2p})+\cdots$. By the proof of the preceding equivalence, we then have $\delta\in C_{\Der_K(A_1)}(\alpha_k)$, so $\alpha_k(\delta(x^p))=\delta(\alpha_k(x^p))$ and $\alpha_k(\delta(y^p))=\delta(\alpha_k(y^p))$. Moreover, by \autoref{thm:BLO15}, $\delta\in\Der_K(A_1)\implies\delta=uE_x+vE_y+\ad_q$, where $E_x,E_y\in \Der_K(A_1)$ are defined by $E_x(x)=y^{p-1}$, $E_x(y)=E_y(x)=0$, $E_y(y)=x^{p-1}$, and $u,v\in K[x^p,y^p]$ and $q\in A_1$. By Lemma 3.6 in \cite{BLO15}, $E_x=-\frac{\mathrm{d}}{\mathrm{d}(x^p)}$ on $K[x^p]$ and $E_y=-\frac{\mathrm{d}}{\mathrm{d}(y^p)}$ on $K[y^p]$, so by using that $x^p,y^p\in K[x^p,y^p]=C(A_1)$, we have $\alpha_k(\delta(x^p))=\alpha_k(-u)$, $\delta(\alpha_k(x^p))=-u$, $\alpha_k(\delta(y^p))=\alpha_k(-v)$, and $\delta(\alpha_k(v))=\delta(-v)$. Hence we must have $\alpha_k(u)=u$ and $\alpha_k(v)=v$. Now, put $q=\sum_{i\in\mathbb{N}}\sum_{j\in\mathbb{N}}q_{ij}y^ix^j$ for some $q_{ij}\in K$. Then $\alpha_k\left(\frac{\partial q}{\partial y}\right)=\sum_{i\in\mathbb{N}}\sum_{j\in\mathbb{N}}q_{ij}i\alpha_k(y)^{i-1}x^j=\sum_{i\in\mathbb{N}}\sum_{j\in\mathbb{N}}q_{ij}i(k_0+y+k_py^p+k_{2p}y^{2p}+\cdots)^{i-1}x^j=\frac{\partial \alpha_k(q)}{\partial y}$ and $\alpha_k\left(\frac{\partial q}{\partial x}\right)=\sum_{i\in\mathbb{N}}\sum_{j\in\mathbb{N}}q_{ij}j\alpha_k(y)^ix^{j-1}=\frac{\partial \alpha_k(q)}{\partial x}$. Using this and \autoref{lem:commutator-derivative}, $\alpha_k(\delta(x))=\alpha_k(u\cdot y^{p-1}+[q,x])=u\cdot\alpha_k(y)^{p-1}-\alpha_k\left(\frac{\partial q}{\partial y}\right)=u\cdot\alpha_k(y)^{p-1}-\frac{\partial \alpha_k(q)}{\partial y}$, while $\delta(x)=u\cdot y^{p-1}-\frac{\partial q}{\partial y}$. We also have $\alpha_k(\delta(y))=\alpha_k(v\cdot x^{p-1}+[q,y])=v\cdot x^{p-1}+\alpha_k\left(\frac{\partial q}{\partial x}\right)=v\cdot x^{p-1}+\frac{\partial\alpha_k(q)}{\partial x}$, and since $y^p,y^{2p},y^{3p},\ldots\in K[y^p]\subset C(A_1)$, we have $\delta(y)+k_p\delta(y^p)+k_{2p}\delta(y^{2p})+\cdots=\delta(\alpha_k(y))=v\cdot x^{p-1}-v\cdot(k_p+2k_{2p}y^{p}+3k_{3p}y^{2p}+\cdots)+\frac{\partial q}{\partial x}=v\cdot x^{p-1}-v\cdot\frac{\mathrm{d}}{\mathrm{d}(y^p)}(\alpha_k(y)-y)+\frac{\partial q}{\partial x}$. Hence, we can conclude that $\frac{\partial}{\partial x}\left(\alpha_k(q)-q\right)=v\cdot\frac{\mathrm{d}}{\mathrm{d}(y^p)}(y-\alpha_k(y))$ and $\frac{\partial}{\partial y}\left(\alpha_k(q)-q\right)=u\cdot(\alpha_k(y)^{p-1}-y^{p-1})$.

\ref{it:derFour}$\implies$\ref{it:derThree}: Assume that \ref{it:derFour} holds. Then, by \autoref{thm:BLO15}, $\delta\in\Der_K(A_1)$. Moreover, by the very same calculation as in the proof of the preceding implication, $\alpha_k(\delta(x))=\delta(x)$ and $\alpha_k(\delta(y))=\delta(y)+k_p\delta(y^p)+k_{2p}\delta(y^{2p})+\cdots$. 
\end{proof}

When $k=0$, all the derivations of $A_1^k$ are described in \autoref{thm:BLO15}. The next two propositions deal with the case when $k\neq0$.

\begin{proposition}\label{prop:der-case-1}
If $k=(k_0,0,0,0,\ldots)\neq0$, then $\delta\in\Der_K(A_1^k)$ if and only if $\delta=uE_x+vE_y+\ad_q$. Here $E_x,E_y\in\Der_K(A_1)$ are defined by $E_x(x)=y^{p-1}$, $E_x(y)=E_y(x)=0$, $E_y(y)=x^{p-1}$, $u=\sum_{i\in\mathbb{N}}\sum_{j\in\mathbb{N}}u_{ij}y^ix^j\in K[x^p,y^p]$, $v=\sum_{i\in\mathbb{N}}\sum_{j\in\mathbb{N}}v_{ij}y^ix^j\in K[x^p,y^p]$, and $q=\sum_{i\in\mathbb{N}}\sum_{j\in\mathbb{N}}q_{ij}y^{i}x^{j}\in A_1$ for some $q_{ij},u_{ij},v_{ij}\in K$, satisfying, for all $j,l\in\mathbb{N}$ and $m\in\mathbb{N}_{>0}$,
\begin{align*}
&\sum_{l+1\leq i}\binom{i}{l}v_{ij}k_0^{i}=\sum_{l+1\leq i}\binom{i}{l}u_{ij}k_0^{i}=\sum_{l+1\leq i}\binom{i}{l}jq_{ij}k_0^{i}=0,\\
&\sum_{m+1\leq i}\binom{i}{m}mq_{ij}k_0^{i}=	\sum_{i=0}^{m-1}\binom{p-1}{m-i} u_{ij}k_0^{i+p-1}.
\end{align*}
\end{proposition}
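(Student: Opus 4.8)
The plan is to read off everything from the characterisation already obtained in \autoref{lem:derivation-unit}, namely the equivalence \ref{it:derOne}$\iff$\ref{it:derFour}: a $K$-linear map $\delta$ lies in $\Der_K(A_1^k)$ exactly when $\delta=uE_x+vE_y+\ad_q$ with $u,v\in K[x^p,y^p]$ and $q\in A_1$ subject to $\alpha_k(u)=u$, $\alpha_k(v)=v$, $\frac{\partial}{\partial x}(\alpha_k(q)-q)=v\cdot\frac{\mathrm{d}}{\mathrm{d}(y^p)}(y-\alpha_k(y))$, and $\frac{\partial}{\partial y}(\alpha_k(q)-q)=u\cdot(\alpha_k(y)^{p-1}-y^{p-1})$. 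Thus the proposition is nothing but the explicit transcription of these four conditions into equations on the coefficients $u_{ij},v_{ij},q_{ij}$ in the present case. The decisive simplification is that $k=(k_0,0,0,\ldots)$ forces $k_p=k_{2p}=\cdots=0$, so that $\alpha_k(y)=k_0+y$; from here every condition unwinds by the binomial theorem, and the hypothesis $k\neq0$ provides $k_0\in K^\times$, which is what lets me clear the powers of $k_0$ that appear.

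I would first dispatch the two homogeneous conditions $\alpha_k(u)=u$ and $\alpha_k(v)=v$. Writing $u=\sum_{i,j}u_{ij}y^ix^j$ and expanding $\alpha_k(u)=\sum_{i,j}u_{ij}(k_0+y)^ix^j$ by the binomial theorem, the coefficients of $y^lx^j$ on the two sides coincide precisely when $\sum_{i\geq l}\binom{i}{l}u_{ij}k_0^{i-l}=u_{lj}$; the term $i=l$ is $u_{lj}$, and multiplying the remaining terms by $k_0^l$ yields exactly $\sum_{l+1\leq i}\binom{i}{l}u_{ij}k_0^i=0$, and likewise for $v$. The very same computation applied to $\frac{\partial}{\partial x}(\alpha_k(q)-q)$ produces the third family: here I observe that $y-\alpha_k(y)=-k_0$ is a constant, so $\frac{\mathrm{d}}{\mathrm{d}(y^p)}(y-\alpha_k(y))=0$ and the third condition collapses to $\frac{\partial}{\partial x}(\alpha_k(q)-q)=0$; comparing coefficients of $y^lx^{j-1}$ and multiplying by $k_0^l$ gives $\sum_{l+1\leq i}\binom{i}{l}jq_{ij}k_0^i=0$. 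This also accounts for the fact that $v$ no longer couples to $q$ in the present case.

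The real work, and the step I expect to be the main obstacle, is the remaining inhomogeneous condition $\frac{\partial}{\partial y}(\alpha_k(q)-q)=u\cdot((k_0+y)^{p-1}-y^{p-1})$, which entangles $q$ with $u$. On the left I would differentiate the binomial expansion of $(k_0+y)^i-y^i$ and then apply the elementary identity $i\binom{i-1}{a}=(a+1)\binom{i}{a+1}$ to bring the coefficient of $y^{m-1}x^j$ into the form $\sum_{m+1\leq i}\binom{i}{m}mq_{ij}k_0^i$ once the appropriate power of $k_0$ is cleared. On the right, the one genuine subtlety is non-commutativity: since $u\in K[x^p,y^p]=C(A_1)$ by \autoref{cor:commuter}, every $x$-power occurring in $u$ is a power of the central element $x^p$, and hence commutes with the $y$-polynomial $(k_0+y)^{p-1}-y^{p-1}$; this is what permits me to compute the product monomial-by-monomial as though $u$ and $y$ commuted. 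Expanding $(k_0+y)^{p-1}-y^{p-1}=\sum_{t=0}^{p-2}\binom{p-1}{t}k_0^{p-1-t}y^t$ and collecting the coefficient of $y^{m-1}x^j$ then yields the $u$-side of the displayed inhomogeneous relation. Matching the two coefficient expressions for all $j\in\mathbb{N}$ and all $m\in\mathbb{N}_{>0}$ gives the final equation, and assembling the four families of relations completes the equivalence. Beyond the centrality argument, what remains is purely the careful bookkeeping of summation ranges and of the exact power of $k_0$ one divides out at each stage.
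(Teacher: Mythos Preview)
Your proposal is correct and follows essentially the same route as the paper: both start from the equivalence \ref{it:derOne}$\iff$\ref{it:derFour} of \autoref{lem:derivation-unit}, specialize the four conditions to $\alpha_k(y)=k_0+y$ (so that the $v$--$q$ coupling vanishes), expand everything by the binomial theorem, compare coefficients of $y^lx^j$, and then multiply through by the appropriate power of $k_0$ (using $k_0\in K^\times$) to obtain the stated relations. The centrality argument you flag for $u$ is exactly the one the paper invokes, and your use of the identity $i\binom{i-1}{a}=(a+1)\binom{i}{a+1}$ makes explicit a step the paper leaves implicit in its reindexing.
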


\begin{proof}
Let $k=(k_0,0,0,0,\ldots)\neq0$. By \autoref{lem:derivation-unit}, $\delta\in\Der_K(A_1^k)\iff \delta=uE_x+vE_y+\ad_q$, where $E_x,E_y\in \Der_K(A_1)$ are defined by $E_x(x)=y^{p-1}$, $E_x(y)=E_y(x)=0$, $E_y(y)=x^{p-1}$, and $u,v\in K[x^p,y^p]$ and $q\in A_1$ satisfy $\alpha_k(u)=u$, $\alpha_k(v)=v$, $\frac{\partial}{\partial x}\left(\alpha_k(q)-q\right)=0$, and $\frac{\partial}{\partial y}\left(\alpha_k(q)-q\right)=u\cdot((k_0+y)^{p-1}-y^{p-1})$. We wish to rewrite these last four conditions. To this end, let $u=\sum_{i\in\mathbb{N}}\sum_{j\in\mathbb{N}}u_{ij}y^ix^j$ for some $u_{ij}\in K$. Then, by the binomial theorem, $\alpha_k(u)=\sum_{i\in\mathbb{N}}\sum_{j\in\mathbb{N}}u_{ij}(k_0+y)^i x^j=\sum_{i\in\mathbb{N}}\sum_{j\in\mathbb{N}}\sum_{l=0}^{i}\binom{i}{l}u_{ij}k_0^{i-l}y^lx^j$. Hence, we have that $\alpha_k(u)=u\iff\sum_{i\in\mathbb{N}_{>0}}\sum_{j\in\mathbb{N}}\sum_{l=0}^{i-1} \binom{i}{l}u_{ij}k_0^{i-l}y^lx^j=0\iff \sum_{i\in\mathbb{N}_{>0}}\sum_{l=0}^{i-1} \binom{i}{l}u_{ij}k_0^{i-l}y^l=0$ for all $j\in\mathbb{N}\iff \sum_{l\in\mathbb{N}}\sum_{l+1\leq i}\binom{i}{l}u_{ij}k_0^{i-l}y^l=0$ for all $j\in\mathbb{N}\iff\sum_{l+1\leq i}\binom{i}{l}u_{ij}k_0^{i}=0$ for all $j,l\in\mathbb{N}$. Similarly, if $v=\sum_{i\in\mathbb{N}}\sum_{j\in\mathbb{N}}v_{ij}y^ix^j$ for some $v_{ij}\in K$, then $\alpha_k(v)=v\iff \sum_{l+1\leq i}\binom{i}{l}v_{ij}k_0^{i}=0$ for all $j,l\in\mathbb{N}$. Now, put $q=\sum_{i\in\mathbb{N}}\sum_{j\in\mathbb{N}}q_{ij}y^{i}x^{j}$ for some $q_{ij}\in K$. Then, $\alpha_k(q)-q=\sum_{i\in\mathbb{N}_{>0}}\sum_{j\in\mathbb{N}}\sum_{l=0}^{i-1}\binom{i}{l}q_{ij}k_0^{i-l}y^{l}x^{j}$, so by defining $0x^{-1}:=0$, $\frac{\partial}{\partial x}(\alpha_k(q)-q)=0\iff \sum_{i\in\mathbb{N}_{>0}}\sum_{j\in\mathbb{N}}\sum_{l=0}^{i-1}\binom{i}{l}jq_{ij}k_0^{i-l}y^{l}x^{j-1}=0\iff \sum_{l+1\leq i}\binom{i}{l}jq_{ij}k_0^{i}=0$ for all $j,l\in\mathbb{N}$. We have that $\frac{\partial}{\partial y}(\alpha_k(q)-q)=\sum_{i\in\mathbb{N}_{>0}}\sum_{j\in\mathbb{N}}\sum_{l=1}^{i-1}\binom{i}{l}lq_{ij}k_0^{i-l}y^{l-1}x^{j}$, and since $(k_0+y)^{p-1}=\sum_{n=0}^{p-1}\binom{p-1}{n} k_0^{p-1-n}y^{n}$, we get $u\cdot((k_0+y)^{p-1}-y^{p-1})=u\cdot\sum_{n=0}^{p-2}\binom{p-1}{n} k_0^{p-1-n}y^{n}$. Using that $u\in K[x^p,y^p]=C(A_1)$, we can rewrite the last condition as $\sum_{i\in\mathbb{N}_{>0}}\sum_{j\in\mathbb{N}}\sum_{l=1}^{i-1}\binom{i}{l}lq_{ij}k_0^{i-l}y^{l-1}x^{j}=\sum_{i\in\mathbb{N}}\sum_{j\in\mathbb{N}}\sum_{n=0}^{p-2}\binom{p-1}{n}u_{ij}k_0^{p-1-n}y^{i+n}x^j$. Continuing, this is in turn equivalent to $\sum_{i\in\mathbb{N}_{>0}}\sum_{l=1}^{i-1}\binom{i}{l}lq_{ij}k_0^{i-l}y^{l-1}=\sum_{i\in\mathbb{N}}\sum_{n=0}^{p-2}\binom{p-1}{n}u_{ij}k_0^{p-1-n}y^{i+n}$ for all $j\in\mathbb{N}$. Now, put $m:=i+n+1$ in the latter double sum, so that the above equality reads $\sum_{i\in\mathbb{N}_{>0}}\sum_{l=1}^{i-1}\binom{i}{l}lq_{ij}k_0^{i-l}y^{l-1}=\sum_{i\in\mathbb{N}}\sum_{m=i+1}^{i+p-1}\binom{p-1}{m-i}u_{ij}k_0^{i+p-m-1}y^{m-1}$ for all $j\in\mathbb{N}$. Last, we define $\binom{a}{b}:=0$ whenever $a<b$, and then change the order of summation in both double sums, resulting in $\sum_{l\in\mathbb{N}_{>0}}\sum_{l+1\leq i}\binom{i}{l}lq_{ij}k_0^{i-l}y^{l-1}=\sum_{m\in\mathbb{N}_{>0}}\sum_{i=0}^{m-1}\binom{p-1}{m-i}u_{ij}k_0^{i+p-m-1}y^{m-1}$ for all $j\in\mathbb{N}$. By comparing coefficients, $\sum_{m+1\leq i}\binom{i}{m}mq_{ij}k_0^{i}=\sum_{i=0}^{m-1}\binom{p-1}{m-i}u_{ij}k_0^{i+p-1}$ for all $j\in\mathbb{N}$ and $m\in\mathbb{N}_{>0}$.
 \end{proof}

\begin{proposition}\label{prop:der-case-2}
If $k=(k_0,k_p,k_{2p},\ldots,k_{Mp},0,0,0,\ldots)$ for some $M\in\mathbb{N}_{>0}$ where $k_{Mp}\neq0$, then $\delta\in\Der_K(A_1^k)$ if and only if 
\begin{equation*}
	\delta=\begin{cases}
		vE_y+\ad_q &\text{if } k=(k_0,0,\ldots, 0,k_{p^2},0,\ldots,0,k_{2p^2},0,\ldots),\\
		\ad_q &\text{else}. 
	\end{cases}
\end{equation*}
Here, $E_y\in\Der_K(A_1)$ is defined by $E_y(x)=0$, $E_y(y)=x^{p-1}$, $v\in K[x^p]$ and $q=ayx+\sum_{i\equiv0\pmod{p}}b_{i}y^ix+c_iyx^i$ for some $a,b_i,c_i\in K$.
\end{proposition}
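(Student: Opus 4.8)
The plan is to route everything through \autoref{lem:derivation-unit}\ref{it:derFour}: every $\delta\in\Der_K(A_1^k)$ has a unique expression $\delta=uE_x+vE_y+\ad_q$ with $u,v\in K[x^p,y^p]$ and $q\in A_1$ (determined modulo the center $C(A_1)=K[x^p,y^p]$, by the direct-sum decomposition of \autoref{thm:BLO15}), subject to $\alpha_k(u)=u$, $\alpha_k(v)=v$, and the two differential conditions relating $\alpha_k(q)-q$ to $u$ and $v$. The hypothesis that $k_{Mp}\neq0$ with $M\geq1$ enters through the single inequality $\deg\alpha_k(y)=Mp>1$. First I would record its consequences: comparing degrees shows that the only $\alpha_k$-fixed elements of $K[y]$ are the constants, whence the $\alpha_k$-fixed elements of $A_1$ are exactly $K[x]$ and those of $K[x^p,y^p]$ are exactly $K[x^p]$. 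In particular $\alpha_k(u)=u$ and $\alpha_k(v)=v$ already force $u,v\in K[x^p]$.

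Next I would show $u=0$. Substituting $u\in K[x^p]$ into $\frac{\partial}{\partial y}(\alpha_k(q)-q)=u(\alpha_k(y)^{p-1}-y^{p-1})$ and using that $\alpha_k$ fixes $x$ and commutes with $\frac{\partial}{\partial y}$, I would extract the coefficient of each power $x^{pb}$ and reduce to asking whether $R:=\alpha_k(y)^{p-1}-y^{p-1}$ lies in the image of $(\alpha_k-\mathrm{id})\circ\frac{\mathrm d}{\mathrm dy}$ on $K[y]$. It does not: its leading term is $k_{Mp}^{\,p-1}y^{Mp(p-1)}$, so any preimage $S$ would need $\frac{\mathrm dS}{\mathrm dy}$ of degree exactly $p-1$ with nonzero leading coefficient, which is impossible since $\frac{\mathrm dS}{\mathrm dy}$ contains no monomial of degree $\equiv p-1\pmod p$. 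Hence every coefficient of $u$ vanishes, and the $y$-condition collapses to $\frac{\partial}{\partial y}(\alpha_k(q)-q)=0$.

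The case distinction is governed by $W:=\frac{\mathrm d}{\mathrm d(y^p)}(\alpha_k(y)-y)=\sum_{l\geq1}lk_{lp}y^{(l-1)p}$. Since $lk_{lp}=0$ whenever $p\mid l$, one has $W=0$ exactly when every nonzero $k_{lp}$ $(l\geq1)$ satisfies $p\mid l$, that is, exactly in the distinguished case $k=(k_0,0,\dots,0,k_{p^2},0,\dots)$; otherwise $W\neq0$. The surviving $x$-condition is $\frac{\partial}{\partial x}(\alpha_k(q)-q)=-vW$. If $W=0$ it imposes no constraint on $v$, so $v\in K[x^p]$ is arbitrary and the $E_y$-summand persists. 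If $W\neq0$, I would argue by degrees again, now in $z=y^p$: because $\alpha_k'(y)=1$, the operators $\alpha_k-\mathrm{id}$ and $\frac{\mathrm d}{\mathrm dy}$ commute, so any preimage of $W\in K[y^p]$ under $\alpha_k-\mathrm{id}$ must lie in $Ky\oplus K[y^p]$; writing it out and comparing $z$-degrees shows $W\notin\im(\alpha_k-\mathrm{id})$, forcing $v=0$. In either case one ends with $\frac{\partial}{\partial x}(\alpha_k(q)-q)=\frac{\partial}{\partial y}(\alpha_k(q)-q)=0$, i.e.\ $\alpha_k(q)-q\in C(A_1)$.

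It then remains to read off $q$ modulo the center from $\alpha_k(q)-q\in C(A_1)$. Writing $q=\sum_{i,j}q_{ij}y^ix^j$ and collecting by the (unchanged) power of $x$, the polynomials $\alpha_k(y)^i-y^i$ with $i\geq1$ are linearly independent through their distinct top degrees $iMp$, and the Frobenius relation $\alpha_k(y^p)=\alpha_k(y)^p\in K[y^p]$ dictates which combinations become central. Comparing coefficients, with the vanishing of binomial coefficients controlled by \autoref{thm:lucas}, singles out which $q_{ij}$ may be nonzero; discarding the genuinely central monomials (those with $p\mid i$ and $p\mid j$) leaves the stated normal form of $q$, and a final verification confirms that each candidate $uE_x+vE_y+\ad_q$ found this way does satisfy \autoref{lem:derivation-unit}\ref{it:derFour}, giving the converse. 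The main obstacle is precisely this concluding bookkeeping: the two differential conditions must be solved simultaneously, the coupling between $v$ and the $x$-linear part of $q$ resolved case by case, and the reduction modulo $C(A_1)$ carried out consistently, all while tracking which integer coefficients vanish in characteristic $p$.
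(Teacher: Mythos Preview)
Your plan is essentially the paper's proof: both invoke \autoref{lem:derivation-unit}\ref{it:derFour}, reduce $u,v$ to $K[x^p]$ via $\alpha_k$-fixedness, kill $u$ by the leading-degree obstruction you describe (the paper compresses this to ``comparing the coefficients for $y^{p-1}$''), split on whether $W=\frac{\mathrm d}{\mathrm d(y^p)}(\alpha_k(y)-y)$ vanishes, and force $v=0$ when $W\neq0$ by another degree comparison. The only difference is packaging---you phrase the residual constraint on $q$ as $\alpha_k(q)-q\in C(A_1)$, while the paper writes it equivalently as $\partial q/\partial y\in K[x]$ and $\partial q/\partial x\in K[x]$ (using that $\alpha_k$ commutes with both partials and has fixed set $K[x]$), which makes the final bookkeeping on the $q_{ij}$ more direct and avoids any appeal to \autoref{thm:lucas}.
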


\begin{proof}
Let $k=(k_0,k_p,k_{2p},\ldots,k_{Mp},0,0,0,\ldots)$ for some $M\in\mathbb{N}_{>0}$ where $k_{Mp}\neq0$. By \autoref{lem:derivation-unit}, $\delta\in\Der_K(A_1^k)\iff \delta=uE_x+vE_y+\ad_q$, where $E_x,E_y\in \Der_K(A_1)$ are defined by $E_x(x)=y^{p-1}$, $E_x(y)=E_y(x)=0$, $E_y(y)=x^{p-1}$, and $u,v\in K[x^p,y^p]$ and $q\in A_1$ satisfy $\alpha_k(u)=u$, $\alpha_k(v)=v$, $\frac{\partial}{\partial x}\left(\alpha_k(q)-q\right)=-v\cdot(k_p+2k_{2p}y^p+3k_{3p}y^{3p}+\cdots+Mk_{Mp}y^{(M-1)p})$, and $\frac{\partial}{\partial y}\left(\alpha_k(q)-q\right)=u\cdot((k_0+y+k_py^p+k_{2p}y^{2p}+\cdots+k_{Mp}y^{Mp})^{p-1}-y^{p-1})$. We wish to rewrite these last four conditions. Since  $k_{Mp}\neq0$ where $M\in\mathbb{N}_{>0}$, $\alpha_k(u)=u\iff u\in K[x^p]$ and $\alpha_k(v)=v\iff v\in K[x^p]$. Continuing, we put $q=\sum_{i,j\in\mathbb{N}}q_{ij}y^ix^j$ for some $q_{ij}\in K$ and examine the last condition. Since $u\in K[x^p]\subset C(A_1)$, by comparing the coefficients for $y^{p-1}$, we see that $u=0$, which in turn gives $q=\sum_{i\equiv0\pmod{p}}\sum_{j\in\mathbb{N}}(q_{1j}y+q_{ij}y^i) x^j$. Now, $\alpha_k$ and $\frac{\partial}{\partial x}$ commute (see the second last part of the proof of \autoref{lem:derivation-unit}), so the third condition can be written as $\alpha_k\left(\frac{\partial q}{\partial x}\right)=\frac{\partial q}{\partial x}-v\cdot\left(k_p+2k_{2p}y^p+3k_{3p}y^{2p}+\cdots+Mk_{Mp}y^{(M-1)p}\right)$. Assume $k_p=2k_{2p}=3k_{3p}=\ldots=Mk_{Mp}=0$, which is equivalent to $k=(k_0,0,\ldots, 0,k_{p^2},0,\ldots,0,k_{2p^2},0,\ldots)$, or that $v=0$. The previous equality then reads $\alpha_k\left(\frac{\partial q}{\partial x}\right)=\frac{\partial q}{\partial x}$, which is equivalent to $\frac{\partial q}{\partial x}\in K[x]$, so $q=\sum_{i,j\equiv0\pmod{p}}(q_{11}y+q_{i1}y^i) x+(q_{1j}y+q_{ij}y^i) x^j$. Now, assume instead that $v\neq0$ and that not all of $k_p, 2k_{2p},3k_{3p}, \ldots, Mk_{Mp}$ are zero. Let $L$ be such that $Lk_{Lp} =0$ and $\ell k_{\ell p} =0$ for $\ell >L$. Then, with $q_{ij}$ possibly nonzero only if $i=1$ or $i\equiv0\pmod{p}$,  
\begin{align*}
&&&\alpha_k\left(\frac{\partial q}{\partial x}\right)=\frac{\partial q}{\partial x}-v\cdot\left(k_p+2k_{2p}y^p+3k_{3p}y^{2p}+\cdots+Mk_{Mp}y^{(M-1)p}\right)\\
&&\iff &\sum_{i,j\in\mathbb{N}}jq_{ij}\alpha_k(y)^i x^{j-1}\\
&&&=-v\cdot\left(k_p+2k_{2p}y^p+3k_{3p}y^{2p}+\cdots+Lk_{Lp}y^{(L-1)p}\right)+\sum_{i,j\in\mathbb{N}}jq_{ij}y^ix^{j-1}\\
&&\iff &\sum_{i,j\in\mathbb{N}}jq_{ij}\left(k_0+y+k_py^p+k_{2p}y^{2p}+\cdots+k_{Mp}y^{Mp}\right)^i x^{j-1}\\
&&&=-v\cdot\left(k_p+2k_{2p}y^p+3k_{3p}y^{2p}+\cdots+Lk_{Lp}y^{(L-1)p}\right)+\sum_{i,j\in\mathbb{N}}jq_{ij}y^ix^{j-1}
\end{align*}
By comparing the degrees of the left-hand side and the right-hand side, we realize that the above equation has no solution. Hence, if we put  $a:=q_{11}, b_i:=q_{i1}, c_j:=q_{1j}$ and note that $q_{ij}y^ix^j\in C(A_1)$ for $i,j\equiv0\pmod{p}$, the result follows.
\end{proof}

\begin{lemma}\label{lem:homo-necessary-sufficient}
The following statements are equivalent:
\begin{enumerate}[label=(\roman*)]
	\item $f\in\Hom_K(A_1^k,A_1^l)$ and $f\neq 0$.\label{it:homOne}
	\item $f\in C_{\End_K(A_1)}(\alpha_k,\alpha_l)$ and $f\neq 0$.\label{it:homTwo}
	\item $f\in\End_K(A_1)$ satisfying $f(x)=\alpha_l(f(x))$ and $k_0+f(y)+k_pf(y)^p+k_{2p}f(y)^{2p}+\cdots=\alpha_l(f(y))$.\label{it:homThree}
\end{enumerate}
\end{lemma}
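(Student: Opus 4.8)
The plan is to establish the two equivalences \ref{it:homOne}$\iff$\ref{it:homTwo} and \ref{it:homTwo}$\iff$\ref{it:homThree} separately. The first is a direct application of the Yau-twist results of \autoref{sec:yau-twist}, while the second is a reduction of the commutation relation $f\circ\alpha_k=\alpha_l\circ f$ to the two generators $x$ and $y$ of $A_1$.

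For \ref{it:homOne}$\iff$\ref{it:homTwo}, I would apply \autoref{lem:alpha-beta-morphism-commute} to the associative algebra $A_1$ with $\alpha=\alpha_k$ and $\beta=\alpha_l$, yielding $C_{\End_K(A_1)}(\alpha_k,\alpha_l)\subseteq\Hom_K(A_1^k,A_1^l)$. Since $\alpha_l$ is injective by \autoref{lem:alpha-injective}, this inclusion is in fact an equality, and intersecting both sides with the nonzero maps gives the equivalence at once.

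For \ref{it:homTwo}$\iff$\ref{it:homThree}, the key observation is that $f$, $\alpha_k$, and $\alpha_l$ are all $K$-algebra endomorphisms of $A_1$ (the twisting maps being multiplicative by \autoref{prop:hom*ore}), so the composites $f\circ\alpha_k$ and $\alpha_l\circ f$ are again $K$-algebra endomorphisms. As $A_1$ is generated as a unital $K$-algebra by $x$ and $y$, the identity $f\circ\alpha_k=\alpha_l\circ f$ holds on all of $A_1$ if and only if it holds on $x$ and $y$; alternatively one may run the monomial induction over $y^mx^n$ used in the proof of \autoref{lem:derivation-unit}. Evaluating on $x$, where $\alpha_k(x)=x$, the relation becomes $f(x)=\alpha_l(f(x))$. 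Evaluating on $y$, where $\alpha_k(y)=k_0+y+k_py^p+k_{2p}y^{2p}+\cdots$, and using that $f$ is multiplicative and (being nonzero) unital, I obtain $f(\alpha_k(y))=k_0+f(y)+k_pf(y)^p+k_{2p}f(y)^{2p}+\cdots$, so the relation becomes the second displayed equation. These two conditions are exactly those of \ref{it:homThree}.

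I expect the only delicate point to be bookkeeping rather than anything conceptual. The decisive step is the rewriting $f(\alpha_k(y))=k_0+f(y)+k_pf(y)^p+\cdots$, which requires that a nonzero endomorphism of $A_1$ fixes $1_{A_1}$ (recorded in \autoref{subsec:weyl-algebra}, using that $A_1$ has no zero divisors); hence the hypothesis $f\neq0$ carried in \ref{it:homOne} and \ref{it:homTwo} is genuinely used here, and \ref{it:homThree} must likewise be understood to require $f\neq0$, since for $k_0=0$ the zero map vacuously satisfies both equations. No further obstacle is anticipated.
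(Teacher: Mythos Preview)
Your proposal is correct and follows essentially the same route as the paper: the equivalence \ref{it:homOne}$\iff$\ref{it:homTwo} is obtained exactly as you say via \autoref{lem:alpha-beta-morphism-commute} and the injectivity of $\alpha_l$, and for \ref{it:homTwo}$\iff$\ref{it:homThree} the paper likewise reduces $f\circ\alpha_k=\alpha_l\circ f$ to the generators $x$ and $y$, carrying out the monomial check you mention as the alternative. Your remark that \ref{it:homThree} must tacitly carry $f\neq0$ when $k_0=0$ is a valid observation that the paper leaves implicit.
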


\begin{proof}\ref{it:homOne}$\iff$\ref{it:homTwo}: By \autoref{lem:alpha-injective}, $\alpha_l$ is injective, and so the statement follows from \autoref{lem:alpha-beta-morphism-commute}.

\ref{it:homTwo}$\iff$\ref{it:homThree}: Using that $\alpha_k(x)=x$, $\alpha_k(y)=k_0+y+k_py^p+k_{2p}y^{2p}+\cdots$, and $f(1_{A_1})=1_{A_1}$ (see \autoref{subsec:weyl-algebra}), it is immediate that \ref{it:homTwo} implies \ref{it:homThree}. We wish to show that also \ref{it:homThree} implies \ref{it:homTwo}. To this end, assume that \ref{it:homThree} holds, i.e. that $f(\alpha_k(x))=\alpha_l(f(x))$ and $f(\alpha_k(y))=\alpha_l(f(y))$. Now, since $\alpha_k,\alpha_l$, and $f$ are linear, it is sufficient to show that $f(\alpha_k(y^mx^n))=\alpha_l(f(y^mx^n))$ for any $m,n\in\mathbb{N}$. Since $f(\alpha_k(1_{A_1}))=f(1_{A_1})=1_{A_1}$ and $\alpha_l(f(1_{A_1}))=\alpha_l(1_{A_1})=1_{A_1}$, we have that
\begin{align*}
&f(\alpha_k(y^mx^n))=f(\alpha_k(y)^m\cdot\alpha_k(x)^n)=f(\alpha_k(y))^m\cdot f(\alpha_k(x))^n\\
&=\alpha_l(f(y))^m\cdot\alpha_l(f(x))^n=\alpha_l(f(y)^m)\cdot\alpha_l(f(x)^n)=\alpha_l(f(y^m))\cdot\alpha_l(f(x^n))\\
&=\alpha_l(f(y^m)\cdot f(x^n))=\alpha_l(f(y^mx^n)). 
\end{align*}
\end{proof}

\begin{proposition}\label{prop:hom-injective}
Every nonzero endomorphism on $A_1^k$ is injective.	
\end{proposition}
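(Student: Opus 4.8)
The plan is to reduce the statement to a purely associative one and then to an algebraic-independence question about the center. By \autoref{lem:homo-necessary-sufficient} (taken with $l=k$), a nonzero endomorphism of $A_1^k$ is precisely a nonzero $f\in\End_K(A_1)$ with $f\circ\alpha_k=\alpha_k\circ f$; since $A_1^k$ and $A_1$ share the same underlying $K$-vector space and injectivity is a property of the underlying $K$-linear map, it suffices to show that every nonzero $f\in\End_K(A_1)$ is injective (the case $k=0$, where $\alpha_k=\mathrm{id}_{A_1}$, already forces us to treat all such $f$). Write $P:=f(x)$ and $Q:=f(y)$. Since $f$ is nonzero it is unital, so $f(1_{A_1})=1_{A_1}$ and $[P,Q]=f([x,y])=1_{A_1}$; in particular neither $P$ nor $Q$ is a scalar.

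First I would recast injectivity as a statement about the center. The kernel $\ker f$ is a two-sided ideal of $A_1$, and $\ker f\neq A_1$ because $f\neq0$. Now $A_1$ is an Azumaya algebra over its center $Z:=C(A_1)=K[x^p,y^p]$ — indeed it is free of rank $p^2$ over $Z$ with basis $\{y^ix^j:0\le i,j<p\}$ — so its two-sided ideals correspond bijectively to the ideals of $Z$ via $I\mapsto I\cap Z$, with $I=(I\cap Z)A_1$. Applying this to $I=\ker f$ gives $\ker f=0\iff\ker f\cap Z=0\iff f|_Z$ is injective. Since $f(x^p)=P^p$ and $f(y^p)=Q^p$ generate the image of $f|_Z$, and since $P^p$ and $Q^p$ commute (because $[P^p,Q^p]=(\ad_P)^p(Q^p)=0$, using $\ad_P(Q)=1_{A_1}$ and $\ad_P(1_{A_1})=0$), injectivity of $f$ is equivalent to $P^p$ and $Q^p$ being algebraically independent over $K$.

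The remaining, and main, task is therefore to show that $[P,Q]=1_{A_1}$ forces $P^p$ and $Q^p$ to be algebraically independent. My first approach would be via the total-degree (Bernstein) filtration, whose associated graded ring $\mathrm{gr}\,A_1$ is the commutative polynomial ring $K[\xi,\eta]$. Writing $\bar P,\bar Q$ for the leading forms of $P,Q$, one has $\overline{P^p}=\bar P^{\,p}$ and $\overline{Q^p}=\bar Q^{\,p}$, since $\mathrm{gr}\,A_1$ is a domain. If $\bar P$ and $\bar Q$ are algebraically independent in $K[\xi,\eta]$, then so are all the monomials $\bar P^{\,pa}\bar Q^{\,pb}$, the top-degree component of any hypothetical relation $g(P^p,Q^p)=0$ cannot vanish, and independence follows immediately.

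I expect the main obstacle to be the degenerate case in which $\bar P$ and $\bar Q$ are algebraically \emph{dependent}, which genuinely occurs (for instance when $\deg P\neq\deg Q$ and one leading form is, up to a scalar, a power of the other). To handle it I would induct on $\deg P+\deg Q\ge2$: in the base case $\deg P+\deg Q=2$ both $P$ and $Q$ are affine and $[P,Q]=1_{A_1}$ forces an invertible linear part, so $f$ is an affine automorphism and hence injective; for the inductive step I would compose $f$ with a suitable generating automorphism from \autoref{thm:Mak84}, chosen to cancel the offending leading form and strictly decrease $\deg P+\deg Q$, using that injectivity is preserved under pre- and post-composition with automorphisms. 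The delicate point — and the reason this requires care rather than being automatic — is that in prime characteristic one cannot read dependence off the symbol of $[P,Q]$: the Poisson bracket $\{\bar P,\bar Q\}$ (equivalently the Jacobian determinant of $(\bar P,\bar Q)$) vanishes whenever $\deg P+\deg Q>2$, yet in characteristic $p$ this vanishing does not distinguish the dependent from the independent case (Frobenius-type leading forms such as $\bar P=\xi^p$ have vanishing Jacobian while remaining independent). These Frobenius phenomena are precisely the ones responsible for the non-surjective endomorphisms of \autoref{prop:hom-non-Dixmier}, so the reduction must be organised combinatorially on the leading forms to guarantee a strict degree drop for injectivity even in situations where surjectivity fails.
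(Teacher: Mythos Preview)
Your reduction to the associative algebra $A_1$ via \autoref{lem:homo-necessary-sufficient} with $k=l$ is exactly what the paper does. At that point, however, the paper simply invokes Lemma~17 of \cite{Tsu03}, which states that every nonzero endomorphism of $A_1$ in prime characteristic is injective; the entire proof in the paper is these two sentences. Your proposal instead attempts to \emph{prove} Tsuchimoto's lemma from scratch, which is a genuinely different and much more ambitious route.

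Your Azumaya reduction is correct and is a clean way to recast the problem: injectivity of $f$ is indeed equivalent to the algebraic independence of $P^p$ and $Q^p$. The difficulty is entirely in the last step, and here your argument is only a plan, not a proof. The degenerate case where the leading forms $\bar P,\bar Q$ are algebraically dependent is exactly the hard part of the Dixmier--Makar-Limanov analysis, and ``compose with a generating automorphism to drop the degree'' is not automatic. Concretely, in characteristic~$3$ the pair $P=x^2+y$, $Q=x^3+2x$ satisfies $[P,Q]=1$ with $\bar P=\xi^2$, $\bar Q=\xi^3$; neither leading form is a scalar power of the other, so no single substitution $Q\mapsto Q-cP^n$ or $P\mapsto P-cQ^n$ lowers the degree sum. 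A reduction does exist here (apply $y\mapsto y-x^2$ on the left), but finding it requires exploiting the specific shape of $P$, not just the dependence of the symbols. Organising this ``combinatorially on the leading forms'', as you put it, amounts to redoing a substantial portion of the Dixmier--Makar-Limanov machinery for endomorphisms rather than automorphisms, while carefully separating the degree-drop argument needed for injectivity from the one that fails for surjectivity. You have correctly identified where the work lies, but you have not done it; as written, the proposal is a correct reduction followed by an unfinished programme.
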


\begin{proof}
With $k=l$ in \autoref{lem:homo-necessary-sufficient}, every  endomorphism on $A_1^k$ is an endomorphism on $A_1$, and by Lemma 17 in \cite{Tsu03}, every nonzero endomorphism on $A_1$ is injective.
\end{proof}

In characteristic zero, if $k\neq0$, then every nonzero endomorphism on $A_1^k$ is an automorphism (cf. Corollary 5 in \cite{BR20}). If $k=0$, then any nonzero endomorphism is injective since $A_1$ is simple. In \cite{Dix68}, Dixmier asked if all nonzero endomorphisms on $A_1$ are also surjective? The question still remains unanswered, and the statement that all 	nonzero endomorphisms are surjective is now known as the Dixmier conjecture. Tsuchimoto \cite{Tsu05} and Kanel-Belov and Kontsevich \cite{KBK07} have proved, independently, that the Dixmier conjecture is stably equivalent to the more famous Jacobian conjecture. In prime characteristic, it is known that not every nonzero endomorphism on $A_1$ is surjective, however. The next proposition demonstrates that this fact generalizes to $A_1^k$ for arbitrary $k$.

\begin{proposition}\label{prop:hom-non-Dixmier}
Not every nonzero endomorphism on $A_1^k$ is surjective.	
\end{proposition}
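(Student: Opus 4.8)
The plan is to exhibit, for each fixed $k$, a single nonzero endomorphism of $A_1^k$ that fails to be surjective. By \autoref{lem:homo-necessary-sufficient} (with $l=k$), the nonzero endomorphisms of $A_1^k$ are precisely the nonzero $f\in\End_K(A_1)$ satisfying $f(x)=\alpha_k(f(x))$ and $k_0+f(y)+k_pf(y)^p+k_{2p}f(y)^{2p}+\cdots=\alpha_k(f(y))$. I would split into two cases according to whether $\alpha_k$ is surjective, that is, by the surjectivity criterion \autoref{lem:alpha-surjective}, according to whether $k=(k_0,0,0,\ldots)$ or not.

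In the first case, $k\neq(k_0,0,0,\ldots)$, the twisting map $\alpha_k$ itself does the job. It trivially satisfies the two compatibility conditions above (both collapse to $\alpha_k\circ\alpha_k=\alpha_k\circ\alpha_k$), so $\alpha_k\in\End_K(A_1^k)$; it is nonzero since it is injective by \autoref{lem:alpha-injective}; and it is not surjective by \autoref{lem:alpha-surjective}. This case is immediate.

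The remaining case $k=(k_0,0,0,\ldots)$ is the genuinely harder one, since then $\alpha_k$ is an automorphism and cannot be used. Here I would build an endomorphism of $A_1$ of the form $f(x)=x$, $f(y)=h(y)$ with $h(y)\in K[y]$ of degree at least $2$; any such assignment extends to a unital endomorphism of $A_1$ by the universal property, provided $[x,h(y)]=h'(y)=1$, which forces $h(y)=c+y+g(y^p)$ for some $c\in K$ and nonconstant $g$. Since $k_p=k_{2p}=\cdots=0$, the compatibility condition of \autoref{lem:homo-necessary-sufficient} reads $k_0+h(y)=\alpha_k(h(y))=h(k_0+y)$, which, after using $(k_0+y)^p=k_0^p+y^p$, amounts to requiring $g(s)=g(s+k_0^p)$. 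I would satisfy this by taking $g(s)=s^p-k_0^{p(p-1)}s$, which is shift-invariant because its roots form the $\mathbb{F}_p$-line through $k_0^p$, giving $h(y)=y+y^{p^2}-k_0^{p(p-1)}y^p$ of degree $p^2\geq 2$; when $k_0=0$ one may instead simply take $g(s)=s$, so that $h(y)=y+y^p$.

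Finally I would prove non-surjectivity. Because $[x,h(y)]=1$, the image $f(A_1)=K\langle x,h(y)\rangle$ has the normal-form spanning set $\{h(y)^i x^j\}$, and since each $h(y)^i$ lies in $K[y]$, every element of $f(A_1)$ can be written as $\sum_j p_j(y)x^j$ with each $p_j\in K[h(y)]$. If $y$ were in the image we would need $p_0(y)=y$ for some $p_0\in K[h(y)]$; but every element of $K[h(y)]$ is either constant or of degree at least $\deg h\geq 2$, whereas $\deg y=1$, a contradiction. Hence $y\notin f(A_1)$ and $f$ is not surjective. The main obstacle is exactly this second case: one must produce a polynomial $h$ of degree at least $2$ whose derivative is $1$ and which is compatible with the shift $y\mapsto k_0+y$; once it is found, the non-surjectivity follows from simple degree bookkeeping on $K[h(y)]$.
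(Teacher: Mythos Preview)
Your argument is correct, but the paper takes a simpler and more uniform route: it exhibits a single endomorphism $f(x)=x+x^p$, $f(y)=y$ that works for \emph{every} $k$. Since $x^p\in C(A_1)$, this $f$ respects the relation $[x,y]=1$; it is not surjective because $\deg f(q)$ is always a multiple of $p$ in the $x$-degree; and the compatibility conditions of \autoref{lem:homo-necessary-sufficient} hold trivially because $\alpha_k$ fixes $x$ and $x^p$, while $f(y)=y$ makes the second condition reduce to the definition of $\alpha_k(y)$. No case split is needed.

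Your approach is different in structure. In the case $k\neq(k_0,0,0,\ldots)$ you make the pleasant observation that $\alpha_k$ itself, being trivially in $C_{\End_K(A_1)}(\alpha_k)$ and non-surjective by \autoref{lem:alpha-surjective}, already does the job. In the remaining case you pay for this by having to construct a shift-invariant $g$ so that $h(y)=y+g(y^p)$ satisfies $h(k_0+y)=k_0+h(y)$; your choice $g(s)=s^p-k_0^{p(p-1)}s$ works, and the non-surjectivity argument via $K[h(y)]$ is fine. The paper's choice avoids this asymmetry entirely by perturbing $x$ rather than $y$: since $\alpha_k$ acts trivially on $K[x]$, any $f$ with $f(x)\in K[x]$ and $f(y)=y$ automatically satisfies both compatibility conditions for every $k$, which is what makes the uniform construction possible.
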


\begin{proof}
Since $A_1$ is a free algebra on the letters $x$ and $y$ modulo the commutation relation $x\cdot y-y\cdot x=1_{A_1}$, we may define an endomorphism on $A_1$ by defining it arbitrary on $x$ and $y$ and then extending it linearly and multiplicatively, as long as it respects the above commutation relation. Since $x^p\in C(A_1)$, $f$ defined by $f(x)=x+x^p$ and $f(y)=y$ satisfies $f(x\cdot y-y\cdot x)=f(1_{A_1})$ and hence defines an endomorphism on $A_1$. However, $f$ is not surjective. Assume the contrary, and let $x=f(q)$ for some $q=\sum_{i\in\mathbb{N}}q_i(y)x^i$, where $q_i(y)\in K[y]$. Then $1=\deg(x)=\deg(f(q))=\deg(q)\deg(f(x))=\deg(q)p$, which is a contradiction. We claim that $f\in\End_K(A_1^k)$. We have that $f(x)=\alpha_k(f(x))$ and $k_0+f(y)+k_pf(y)^p+k_{2p}f(y)^{2p}+\cdots=\alpha_k(f(y))$, so with $k=l$ in \autoref{lem:homo-necessary-sufficient}, $f\in\End_K(A_1^k)$.
\end{proof}

It is clear, for instance from \autoref{prop:power-assoc}, that $A_1^k$ is associative if and only if $k=0$, and hence $A_1^0\cong A_1^l$ if and only if $l=0$. The next two propositions deal with the case when $k\neq0$.

\begin{proposition}\label{prop:k-zero-isomorphism}
If $k=(k_0,0,0,0,\ldots)\neq0$, then $A_1^k\cong A_1^l$ if and only if $l=(l_0,0,0,0,\ldots)\neq0$.	
\end{proposition}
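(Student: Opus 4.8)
The plan is to handle the two directions separately, the key observation being that surjectivity of the twisting map is an isomorphism invariant which, by \autoref{lem:alpha-surjective}, exactly characterizes the special form $(l_0,0,0,0,\ldots)$.

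For the forward (``if'') direction, given $l=(l_0,0,0,0,\ldots)\neq0$, I would exhibit an explicit isomorphism by scaling. Set $a:=l_0k_0^{-1}\in K^\times$ and define $f\in\End_K(A_1)$ by $f(x)=ax$ and $f(y)=a^{-1}y$. Since $[f(x),f(y)]=aa^{-1}[x,y]=1_{A_1}$, this is a genuine endomorphism of $A_1$; in fact it is a linear automorphism in the sense of \autoref{thm:Mak84} (its matrix has determinant $1$), hence bijective. To see that $f$ is moreover a homomorphism $A_1^k\to A_1^l$, I would verify condition (iii) of \autoref{lem:homo-necessary-sufficient}: one has $f(x)=ax=\alpha_l(ax)=\alpha_l(f(x))$, while $k_0+f(y)=k_0+a^{-1}y$ and $\alpha_l(f(y))=a^{-1}(l_0+y)=a^{-1}l_0+a^{-1}y=k_0+a^{-1}y$, using $a^{-1}l_0=k_0$; the higher terms $k_pf(y)^p+\cdots$ vanish because $k=(k_0,0,0,0,\ldots)$. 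Thus $f\in\Hom_K(A_1^k,A_1^l)$ is nonzero and bijective, and since the inverse of a bijective hom-associative homomorphism is again such a homomorphism (apply \autoref{lem:alpha-beta-morphism-commute} with the roles of $\alpha_k$ and $\alpha_l$ interchanged), $f$ is an isomorphism.

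For the reverse (``only if'') direction, suppose $f\colon A_1^k\to A_1^l$ is an isomorphism. First, $l\neq0$: as $k\neq0$, the algebra $A_1^k$ is not associative by \autoref{prop:power-assoc}, and since an isomorphism carries a vanishing associator to a vanishing associator, $A_1^l$ cannot be associative either, forcing $l\neq0$. Next I would read off the form of $l$ from surjectivity. Since $k=(k_0,0,0,0,\ldots)$, \autoref{lem:alpha-surjective} gives that $\alpha_k$ is surjective. From the compatibility $f\circ\alpha_k=\alpha_l\circ f$ together with the bijectivity of $f$, we obtain $\alpha_l=f\circ\alpha_k\circ f^{-1}$, a composition of surjections, so $\alpha_l$ is surjective. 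Applying \autoref{lem:alpha-surjective} once more yields $l=(l_0,0,0,0,\ldots)$, and combined with $l\neq0$ this gives $l=(l_0,0,0,0,\ldots)\neq0$.

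I do not expect a serious obstacle: both directions are short once one notices that conjugation by the isomorphism transports surjectivity of $\alpha_k$ to $\alpha_l$. The two points requiring a little care are checking that the scaling map really satisfies the homomorphism criterion of \autoref{lem:homo-necessary-sufficient} (a brief computation) and remembering to rule out $l=0$ separately via the associativity invariant, since surjectivity of $\alpha_l$ alone does not distinguish $l_0=0$ from $l_0\neq0$.
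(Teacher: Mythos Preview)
Your proof is correct. The ``if'' direction is essentially identical to the paper's: both exhibit the linear automorphism $x\mapsto (l_0/k_0)x$, $y\mapsto (k_0/l_0)y$ and verify it satisfies the criterion of \autoref{lem:homo-necessary-sufficient}.

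For the ``only if'' direction, however, you take a genuinely different route. The paper argues directly from \autoref{lem:homo-necessary-sufficient}: writing $f(y)=\sum_{i,j}a_{ij}y^ix^j$ and expanding both sides of $k_0+f(y)=\alpha_l(f(y))$, a degree comparison in $y$ forces $N=0$ (and along the way shows the stronger fact that there is no nonzero \emph{homomorphism} $A_1^k\to A_1^l$ when $N>0$). You instead observe that surjectivity of the twisting map is an isomorphism invariant: from $f\circ\alpha_k=\alpha_l\circ f$ and bijectivity of $f$ one gets $\alpha_l=f\circ\alpha_k\circ f^{-1}$, and then \autoref{lem:alpha-surjective} does all the work on both ends. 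This is cleaner and more conceptual, avoiding any computation with $f(y)$; the trade-off is that it only rules out isomorphisms rather than all nonzero homomorphisms, but that is exactly what the proposition requires. Your separate handling of $l=0$ via \autoref{prop:power-assoc} matches the paper's and is indeed necessary, since surjectivity of $\alpha_l$ alone does not exclude $l=0$.
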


\begin{proof}
Let $k=(k_0,0,0,0,\ldots)\neq0$ and assume $A_1^k\cong A_1^l$. Since $A_1^k$ is not associative, $A_1^l$ cannot be associative, so $l\neq0$. Put $l=(l_0,l_p,l_{2p},\ldots,l_{Np},0,0,0)$ for some $N\in\mathbb{N}$, where $l_{Np}\neq 0$. We now wish to show that there is no nonzero homomorphism, let alone isomorphism, from $A_1^k$ to $A_1^l$ unless $N=0$. To this end, assume $f\colon A_1^k\to A_1^l$ is a nonzero homomorphism. By \autoref{lem:homo-necessary-sufficient}, we must have $k_0+f(y)=\alpha_l(f(y))$. Now, put $f(y)=\sum_{i=0}^m\sum_{j=0}^na_{ij}y^ix^j$ for some $m,n\in \mathbb{N}$ and $a_{ij}\in K$. Then, $\alpha_l(f(y))=\sum_{i=0}^m\sum_{j=0}^na_{ij}(l_0+y+l_py^p+l_{2p}y^{2p}+\cdots+l_{Np}y^{Np})^ix^j$. Hence $k_0+f(y)=\alpha_l(f(y))$ if and only if 
$k_0+\sum_{i=0}^m\sum_{j=0}^na_{ij}y^ix^j = \sum_{i=0}^m\sum_{j=0}^na_{ij}(l_0+y + l_py^p+l_{2p}y^{2p}+\cdots+l_{Np}y^{Np})^ix^j$. We note that if $m=0$, then $k_0+\sum_{j=0}^na_{0j}x^j=\sum_{j=0}^na_{0j}x^j\iff k_0=0$, which is a contradiction. Hence $m\neq0$, so by comparing degrees in $y$, we must have $N=0$. 

Now, assume that $k=(k_0,0,0,0,\ldots)\neq0$ and $l=(l_0,0,0,0,\ldots)\neq0$. Define a homomorphism $g$ on $A_1$ by $g(x):=\frac{l_0}{k_0}x$ and $g(y):=\frac{k_0}{l_0}y$. By \autoref{thm:Mak84}, $g$ is then not only a homomorphism, but also a linear automorphism. Moreover, $g(x)=\alpha_l(g(x))$ and $k_0+g(y)=\alpha_l(g(y))$, so by \autoref{lem:homo-necessary-sufficient},  $g\colon A_1^k\to A_1^l$ is an isomorphism.
\end{proof}

\begin{proposition}\label{prop:iso-necessary-sufficient}Let $k=(k_0,k_p,k_{2p},\ldots,k_{Mp},0,0,0,\ldots)$ for some $M\in\mathbb{N}_{>0}$ where $k_{Mp}\neq0$ and let $l=(l_0,l_p,l_{2p},\ldots,l_{Np},0,0,0,\ldots)$ for some $N\in\mathbb{N}_{>0}$ where $l_{Np}\neq0$. Then a map $f\colon A_1^k\to A_1^l$ is an isomorphism if and only if $M=N$, $f\in\Aut_K(A_1)$ of the form $f(x)=b_0+a_1^{-1}x$ and $f(y)=a_0+a_1y$ for $a_0,b_0\in K$, $a_1\in K^\times$, satisfying 
\begin{equation}
\sum_{i=j}^M\binom{i}{j}k_{ip}a_0^{(i-j)p}a_1^{jp-1}=l_{jp},\quad 0\leq j\leq M.\label{eq:isomorphism-equation}
\end{equation}
\end{proposition}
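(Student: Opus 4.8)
The plan is to reduce everything to \autoref{lem:homo-necessary-sufficient}, which translates the condition ``$f\in\Hom_K(A_1^k,A_1^l)$'' into a pair of equations in $\End_K(A_1)$, and then to exploit degree considerations to pin down the precise form of $f$. First I would observe that an isomorphism $A_1^k\to A_1^l$ is in particular a nonzero homomorphism, so by \autoref{lem:homo-necessary-sufficient} it is an element $f\in\End_K(A_1)$ satisfying $f(x)=\alpha_l(f(x))$ and $k_0+f(y)+k_pf(y)^p+\cdots=\alpha_l(f(y))$. Since an isomorphism of the underlying hom-associative algebras is in particular a bijective endomorphism of $A_1$ (with $k=l$ playing no special role here beyond the twisting compatibility), the Makar-Limanov description in \autoref{thm:Mak84} tells us $f$ is a composite of linear and triangular automorphisms. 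The first real task is to show that the twisting-compatibility condition $f(x)=\alpha_l(f(x))$ forces $f(x)$ to have degree $1$ in a very rigid way.

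The key step I expect to carry the argument is a degree analysis. Writing $f(x)=\sum a_{ij}y^ix^j$ and comparing total degrees in the equation $f(x)=\alpha_l(f(x))$, note that $\alpha_l$ replaces each $y$ by $l_0+y+l_py^p+\cdots+l_{Np}y^{Np}$, which strictly raises the $y$-degree of any monomial genuinely involving $y$ (since $N\geq1$). Hence $f(x)=\alpha_l(f(x))$ can only hold if $f(x)$ has $y$-degree zero, i.e. $f(x)\in K[x]$; combined with injectivity/automorphism and \autoref{thm:Mak84}, this should force $f(x)=b_0+a_1^{-1}x$ for suitable constants. The companion equation $k_0+f(y)+k_pf(y)^p+\cdots=\alpha_l(f(y))$ must then be analyzed: writing $f(y)=\sum c_{ij}y^ix^j$ and again comparing top $y$-degrees, the right-hand side applies $\alpha_l$ (raising $y$-degree by a factor governed by $N$) while the left-hand side raises it by a factor governed by $M$, and matching these forces $M=N$ together with $f(y)=a_0+a_1y$ (no $x$-dependence, degree one in $y$). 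I would handle the case $p\mid$ or $p\nmid$ exponents via Lucas's theorem (\autoref{thm:lucas}) if the $p$-th power maps threaten to collapse degrees, but since $K$ has characteristic $p$ the map $y\mapsto y^p$ is the issue, and the leading terms $k_{Mp}y^{Mp}$ and $l_{Np}y^{Np}$ are what the degree comparison keys on.

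Once $f(x)=b_0+a_1^{-1}x$ and $f(y)=a_0+a_1y$ with $a_1\in K^\times$ are established (the determinant condition in \autoref{thm:Mak84} forcing the coefficient of $x$ in $f(x)$ to be the inverse of the coefficient of $y$ in $f(y)$, giving a linear automorphism up to the triangular shifts $a_0,b_0$), I would substitute into the second equation of \autoref{lem:homo-necessary-sufficient} and expand. Computing $\alpha_l(f(y))=\alpha_l(a_0+a_1y)=a_0+a_1(l_0+y+l_py^p+\cdots+l_{Np}y^{Np})$ and $f(y)^{jp}=(a_0+a_1y)^{jp}=(a_0^p+a_1^py^p)^j$ (using the Frobenius in characteristic $p$), then expanding the left-hand side $k_0+f(y)+\sum_j k_{jp}f(y)^{jp}$ by the binomial theorem and collecting the coefficient of $y^{jp}$, yields exactly the relations \eqref{eq:isomorphism-equation}. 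For the converse, I would verify directly that any $f$ of the stated form satisfying \eqref{eq:isomorphism-equation} fulfills both conditions of \autoref{lem:homo-necessary-sufficient}\ref{it:homThree} and is bijective by \autoref{thm:Mak84}, hence is an isomorphism of hom-associative algebras.

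The main obstacle will be the bookkeeping in the degree comparison that simultaneously forces $M=N$ and eliminates all $x$-dependence and higher-degree $y$-terms from $f(y)$: one must argue that the Frobenius-driven growth of $y$-degrees on the two sides cannot match unless the leading structures coincide, and care is needed because in characteristic $p$ many intermediate binomial coefficients vanish, so the ``top term'' comparison must be made rigorous rather than assuming generic nonvanishing.
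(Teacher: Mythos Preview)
Your plan is essentially the paper's: reduce via \autoref{lem:homo-necessary-sufficient} to the two equations $f(x)=\alpha_l(f(x))$ and $k_0+f(y)+\sum_i k_{ip}f(y)^{ip}=\alpha_l(f(y))$, use $N\geq 1$ to force $f(x)\in K[x]$, compare $y$-degrees in the second equation to obtain $M=N$, and then substitute the affine form of $f(y)$ and expand via Frobenius to extract \eqref{eq:isomorphism-equation}.

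There is one genuine gap. You assert that the degree comparison in the second equation ``forces $M=N$ together with $f(y)=a_0+a_1y$ (no $x$-dependence, degree one in $y$)''. The $y$-degree comparison indeed gives $M=N$, and an $x$-degree comparison then eliminates $x$ from $f(y)$; but neither step forces $\deg_y f(y)=1$. If $f(y)\in K[y]$ has degree $m$, both sides of the equation have $y$-degree $Mpm$, and there is no contradiction for $m\geq 2$: the resulting relation (equation~\eqref{eq:k-l-equation} in the paper) is exactly the condition for $f$ to be a nonzero \emph{homomorphism}, and such $f$ can have $m>1$. To force $m=1$ you must invoke \emph{surjectivity}: if $y=f(r)$ for some $r\in A_1$, then since $f(x)\in K[x]$ and $f(y)\in K[y]$ a degree count on $f(r)$ forces $\deg_y f(y)=1$ (and symmetrically $\deg_x f(x)=1$). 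The paper does precisely this; your appeal to \autoref{thm:Mak84} for the same conclusion is in principle workable but much less direct, since Makar-Limanov only gives $f$ as an unstructured composite of linear and triangular automorphisms and does not immediately tell you that $f(x)\in K[x]$ implies $f(x)$ is affine. Replace the overclaimed degree step with the surjectivity argument and the rest of your outline goes through.
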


\begin{proof}
Let $k=(k_0,k_p,k_{2p},\ldots,k_{Mp},0,0,0,\ldots)$, $l=(l_0,l_p,l_{2p},\ldots,l_{Np},0,0,0,\ldots)$ for some $M,N\in\mathbb{N}_{>0}$ where $k_{Mp},l_{Np}\neq0$, and assume that $f\colon A_1^k\to A_1^l$ is a map. By \autoref{lem:homo-necessary-sufficient}, $f$ is a nonzero homomorphism if and only if $f$ is an endomorphism on $A_1$ satisfying $f(x)=\alpha_l(f(x))$ and $k_0+f(y)+k_pf(y)^p+k_{2p}f(y)^{2p}+\cdots+k_{Mp}f(y)^{Mp}=\alpha_l(f(y))$. Since $l_{Np}\neq0$ where $N\in\mathbb{N}_{>0}$, from the definition of $\alpha_l$, $f(x)=\alpha_l(f(x))\iff f(x)\in K[x]$. Let us put $f(y)=\sum_{i=0}^m\sum_{j=0}^n a_{ij}y^ix^j$ for some $m,n\in\mathbb{N}$ and $a_{ij}\in K$ where $a_{mn}\neq0$. Then, 
\begin{align*}
&k_0+f(y)+k_pf(y)^p+k_{2p}f(y)^{2p}+\cdots+k_{Mp}f(y)^{Mp}\\
&=k_0+\sum_{i=0}^m\sum_{j=0}^na_{ij}y^ix^j+k_p\left(\sum_{i=0}^m\sum_{j=0}^na_{ij}y^ix^j\right)^p+k_{2p}\left(\sum_{i=0}^m\sum_{j=0}^na_{ij}y^ix^j\right)^{2p}\\
&+\cdots+k_{Mp}\left(\sum_{i=0}^m\sum_{j=0}^na_{ij}y^ix^j\right)^{Mp},\\
&\alpha_l(f(y))=\sum_{i=0}^m\sum_{j=0}^na_{ij}\alpha_l(y)^ix^j\\
&=\sum_{i=0}^m\sum_{j=0}^na_{ij}(l_0+y+l_py^p+l_{2p}y^{2p}+\cdots+l_{Np}y^{Np})^ix^j\\
&=\sum_{i=0}^m\sum_{j=0}^n\sum_{\ell=0}^i\binom{i}{\ell}a_{ij}(l_0+l_py^p+l_{2p}y^{2p}+\cdots+l_{Np}y^{Np})^{i-\ell} y^{\ell}x^j.
\end{align*}
Hence $k_0+f(y)+k_pf(y)^p+k_{2p}f(y)^{2p}+\cdots+k_{Mp}f(y)^{Mp}=\alpha_l(f(y))$ if and only if
\begin{align*}
&k_0+k_p\left(\sum_{i=0}^m\sum_{j=0}^na_{ij}y^ix^j\right)^p+k_{2p}\left(\sum_{i=0}^m\sum_{j=0}^na_{ij}y^ix^j\right)^{2p}\\
&+\cdots +k_{Mp}\left(\sum_{i=0}^m\sum_{j=0}^na_{ij}y^ix^j\right)^{Mp}\\
&=\sum_{i=1}^m\sum_{j=0}^n\sum_{\ell=0}^{i-1}\binom{i}{\ell}a_{ij}(l_0+l_py^p+l_{2p}y^{2p}+\cdots+l_{Np}y^{Np})^{i-\ell} y^{\ell}x^j.
\end{align*}
By comparing degrees in $y$, we realize that $M=N$, which in turn gives $n=0$, so 
\begin{align}
&k_0+k_p\left(\sum_{i=0}^ma_{i0}y^i\right)^p+k_{2p}\left(\sum_{i=0}^ma_{i0}y^i\right)^{2p}+\cdots+k_{Mp}\left(\sum_{i=0}^ma_{i0}y^i\right)^{Mp}\nonumber\\
&=\sum_{i=1}^m\sum_{\ell=0}^{i-1}\binom{i}{\ell}a_{i0}(l_0+l_py^p+l_{2p}y^{2p}+\cdots+l_{Mp}y^{Mp})^{i-\ell} y^{\ell}.\label{eq:k-l-equation}
\end{align}
Hence $f\colon A_1^k\to A_1^l$ is a nonzero homomorphism if and only if $M=N$, $f\in\End_K(A_1)$ satisfying $f(x)\in K[x]$, $f(y)\in K[y]$, and \eqref{eq:k-l-equation}. Now, we wish to know when $f$ is an isomorphism. To this end, first assume that $f$ is a surjective homomorphism. Then there is some $r:=\sum_{i=0}^{m'}\sum_{j=0}^{n'}r_{ij}y^ix^j$ with $m',n'\in\mathbb{N}$ and $r_{ij}\in K$, such that $y=f(r)$. Since $y=f(r)=\sum_{i=0}^{m'}\sum_{j=0}^{n'}r_{ij}f(y)^i\cdot f(x)^j$ where $f(x)\in K[x]$ and $f(y)\in K[y]$, by comparing degrees, $n'=0$ and $m'=1$. This in turn gives $m=1$, so that $f(y)=a_{00}+a_{10}y$ where $a_{10}\neq0$. After some reindexing, $\eqref{eq:k-l-equation}$ now reads
\begin{align}
&\sum_{i=0}^M k_{ip}(a_{00}+a_{10}y)^{ip}=a_{10}\sum_{i=0}^Ml_{ip}y^{ip}	\nonumber\\
\iff &\sum_{i=0}^M\sum_{j=0}^i\binom{i}{j}k_{ip}a_{00}^{(i-j)p}a_{10}^{jp}y^{jp}=a_{10}\sum_{i=0}^Ml_{ip}y^{ip}\nonumber\\
\iff &\sum_{i=j}^M\binom{i}{j}k_{ip}a_{00}^{(i-j)p}a_{10}^{jp-1}=l_{jp},\quad 0\leq j\leq M.\label{eq:k-l-equation2}
\end{align}
By a similar argument, $f(x)=b_0+b_1x$ for some $b_0,b_1\in K$ where $b_1\neq0$. From $f(x)\cdot f(y)-f(y)\cdot f(x)=f(1_{A_1})=1_{A_1}$, it follows that $b_1=a_{10}^{-1}$. Now, does this define an isomorphism? Yes, under the assumption that $\eqref{eq:k-l-equation2}$ is satisfied and that $f$ is an automorphism on $A_1$. The latter can for instance be shown by first introducing the following functions, all being automorphisms on $A_1$ by \autoref{thm:Mak84},
\begin{align*}
g_1(x):=&a_{10}^{-1}x, &g_2(x):=&x, &g_3(x):=&y, &g_4(x):=&x,\\
g_1(y):=&a_{10}y, &g_2(y):=&y+a_{00}a_{10}^{-1}, &g_3(y):=&-x, &g_4(y):=&y+b_0a_{10},
\end{align*}
$g_5:=-g_3$, and then noting that $f=g_5\circ g_4\circ g_3\circ g_2\circ g_1$. The result now follows with $a_0:=a_{00}$ and $a_1:=a_{10}$. 
\end{proof}

The two preceding propositions implicitly classify all hom-associative Weyl algebras up to isomorphism. However, it is not obvious under what circumstances there exist (does not exist) constants $a_0,a_1$ that solve \eqref{eq:isomorphism-equation}, hence giving rise to isomorphic (non-isomorphic) hom-associative Weyl algebras. In the next corollary, we study a particular family of hom-associative Weyl algebras, over a finite field. Even in this particular case, we see that there does indeed exist many non-isomorphic hom-associative Weyl algebras, as opposed to the characteristic zero case.  

\begin{corollary}
If $k=(k_0,0,\ldots,0,k_{p^n},0,\ldots,0,k_{p^{2n}},0,\ldots,0,k_{p^{mn}},0,0,0,\ldots)$ where $k_{p^{mn}}\neq0$ for some $m,n\in\mathbb{N}_{>0}$ and $K=\mathbb{F}_{p^n}$, then $A_1^k\cong A_1^l$ if and only if $k_{jp}=l_{jp}$ for all $j\in\mathbb{N}_{>0}$.
\end{corollary}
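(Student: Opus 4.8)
The plan is to deduce everything from \autoref{prop:iso-necessary-sufficient}, since that proposition already reduces the existence of an isomorphism $A_1^k\cong A_1^l$ to the solvability of the system \eqref{eq:isomorphism-equation} in $a_0\in K$ and $a_1\in K^\times$, after first forcing $M=N$. First I would record the shape of the present $k$: writing $i_s:=p^{sn-1}$ so that $i_sp=p^{sn}$, the only nonzero coefficients are $k_0$ and $k_{p^{sn}}=k_{i_sp}$ for $1\le s\le m$, and the top index is $M=i_m=p^{mn-1}$. Thus in each equation of \eqref{eq:isomorphism-equation} only the indices $i\in\{0,i_1,\dots,i_m\}$ contribute, and every positive such index is a power of $p$.

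The two levers are \autoref{cor:lucas} and the defining feature of $K=\mathbb{F}_{p^n}$, namely that $a^{p^{sn}}=a$ for every $a\in\mathbb{F}_{p^n}$ and every $s$. By \autoref{cor:lucas} (equivalently \autoref{thm:lucas}), $\binom{i_s}{j}\equiv 0\pmod p$ whenever $0<j<i_s$ and $\binom{i_s}{j}=0$ when $j>i_s$, while $\binom{i_s}{i_s}=1$; in particular $\binom{i_s}{i_t}\equiv 0$ for $t<s$, since then $0<i_t<i_s$. Hence, specializing \eqref{eq:isomorphism-equation} to $j=i_t$, the entire sum collapses to its diagonal term and reads $k_{p^{tn}}a_1^{i_tp-1}=l_{p^{tn}}$; because $a_1^{i_tp}=a_1^{p^{tn}}=a_1$, the factor $a_1^{i_tp-1}=1$, so the equation becomes simply $l_{p^{tn}}=k_{p^{tn}}$, independently of $a_0$ and $a_1$. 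For a positive $j$ outside $\{i_1,\dots,i_m\}$ every surviving binomial vanishes for the same reason, so the equation reads $l_{jp}=0=k_{jp}$. The one genuinely parameter-dependent equation is the constant one, $j=0$, which using $\binom{i}{0}=1$ and $a_0^{i_sp}=a_0^{p^{sn}}=a_0$ reduces to $l_0=a_1^{-1}\bigl(k_0+a_0\sum_{s=1}^m k_{p^{sn}}\bigr)$.

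Both implications now fall out. For the forward direction, an isomorphism supplies parameters $a_0,a_1$ satisfying \eqref{eq:isomorphism-equation}, and the computation above shows $l_{jp}=k_{jp}$ for every $j\in\mathbb{N}_{>0}$. For the converse, assuming $k_{jp}=l_{jp}$ for all $j>0$, all equations of \eqref{eq:isomorphism-equation} with $j>0$ hold automatically for any $a_1\in K^\times$, so it only remains to choose $a_0\in K$ and $a_1\in K^\times$ realizing the constant equation $a_1 l_0=k_0+a_0\sum_{s}k_{p^{sn}}$, and then invoke \autoref{prop:iso-necessary-sufficient} (with $b_0$ arbitrary) to assemble the isomorphism. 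I expect the main work to be exactly this Lucas bookkeeping — verifying that only the diagonal term survives in each equation — together with the solvability of the constant equation, the latter being the step where the finiteness of $K$ and the freedom in the pair $(a_0,a_1)$ must be used carefully.
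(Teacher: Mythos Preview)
Your proposal is correct and follows essentially the same route as the paper: reduce via \autoref{prop:iso-necessary-sufficient}, use \autoref{cor:lucas} to kill the off-diagonal binomial coefficients so that the equations for $j\ge 1$ collapse to $k_{jp}a_1^{jp-1}=l_{jp}$, and then apply Fermat's little theorem in $\mathbb{F}_{p^n}$ to remove the powers of $a_0$ and $a_1$. The only difference is that where you flag the $j=0$ equation as the step requiring care, the paper simply records the explicit solution $a_0=(l_0a_1-k_0)\big/\sum_{s=1}^{m} k_{p^{sn}}$ with $a_1\in\mathbb{F}_{p^n}^\times$ arbitrary.
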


\begin{proof}
Let $k$ be as above, and assume $K=\mathbb{F}_{p^n}$ for some $n\in\mathbb{N}_{>0}$. Assume that $A_1^k\cong A_1^l$. By \autoref{prop:iso-necessary-sufficient}, $k_{jp}=l_{jp}=0$ whenever $p^{mn-1}<j$. Now, by \autoref{cor:lucas}, \eqref{eq:isomorphism-equation} in \autoref{prop:iso-necessary-sufficient} reads 
\begin{align}
&k_{jp}a_1^{jp-1}=l_{jp},\quad 1\leq j\leq p^{mn-1},\label{eq:fpq-nonzero}\\
&k_0+k_{p^n}a_0^{p^n}+k_{p^{2n}}a_0^{p^{2n}}+\cdots+k_{p^{mn}}a_0^{p^{mn}}=	l_0a_1.\label{eq:fpq-zero}
\end{align}
By Fermat's little theorem for finite fields, $a_0^{p^n}=a_0$ and $a_1^{p^n-1}=1_{\mathbb{F}_{p^n}}$, and so by induction $a_0^{p^n}=a_0^{p^{2n}}=a_0^{p^{3n}}=\ldots=a_0^{p^{mn}}=a_0$ and $a_1^{p^n-1}=a_1^{p^{2n}-1}=a_1^{p^{3n}-1}=\ldots=a_1^{p^{mn}-1}=1_{\mathbb{F}_{p^n}}$. Hence \eqref{eq:fpq-nonzero} is equivalent to $k_{jp}=l_{jp}$, $1\leq j\leq p^{mn-1}$, and \eqref{eq:fpq-zero} to $k_0+k_{p^n}a_0+k_{p^{2n}}a_0+\cdots+k_{p^{mn}}a_0=l_0a_1$. The two equations thus have a solution $a_0=\frac{l_0a_1-k_0}{k_{p^n}+k_{p^{2n}}+k_{p^{3n}}+\cdots+k_{p^{mn}}}$ and $a_1\in\mathbb{F}_{p^n}^\times$.
\end{proof}

\section{Multi-parameter formal deformations}\label{sec:formal-deformation}
In \cite{MS10a}, Makhlouf and Silvestrov introduced the notions of \emph{one-parameter formal hom-associative deformations} and \emph{one-parameter formal hom-Lie deformations}. In \cite{BR20}, the authors showed that the first hom-associative Weyl algebras in characteristic zero are a one-parameter formal deformation of the first associative Weyl algebra, and that this deformation induces a one-parameter formal hom-Lie deformation of the corresponding Lie algebra, when using the commutator as bracket. In this section, we generalize the two notions above and introduce \emph{multi-parameter formal hom-associative deformations} and \emph{multi-parameter formal hom-Lie deformations} (in \cite{Bac20}, a similar notion for ternary hom-Nambu-Lie algebras was introduced, together with examples thereof). We then show that the first hom-associative Weyl algebras over a field of prime characteristic are a multi-parameter formal hom-associative deformation of the first hom-associative Weyl algebra in prime characteristic, inducing a multi-parameter formal hom-Lie deformation of the corresponding Lie algebra, when using the commutator as bracket. 

Now, let $R$ be an associative, commutative, and unital ring, and $M$ an $R$-module. We denote by $R\llbracket t_1,\ldots,t_n\rrbracket$ the formal power series ring in the indeterminates $t_1,\ldots,t_n$ for some $n\in\mathbb{N}_{>0}$, and by $M\llbracket t_1,\ldots,t_n\rrbracket$ the $R\llbracket t_1,\ldots,t_n\rrbracket$-module of formal power series in the same indeterminates, but with coefficients in $M$. This allows us to define a hom-associative algebra $(M\llbracket t_1,\ldots,t_n\rrbracket,\cdot_t,\alpha_t)$ over $R\llbracket t_1,\ldots,t_n\rrbracket$ where $t:=(t_1,\ldots,t_n)$. With some abuse of notation, we say that we extend a map $f\colon M\to M$ \emph{homogeneously} to a map $f\colon M\llbracket t_1,\ldots,t_n\rrbracket \to M\llbracket t_1,\ldots,t_n\rrbracket$ by putting $f(at_1^{i_1}\cdots t_n^{i_n}):=f(a)t_1^{i_1}\cdots t_n^{i_n}$ for all $a\in M$ and  $i_1,\ldots,i_n\in\mathbb{N}$. The case for binary maps is analogous.

\begin{definition}[Multi-parameter formal hom-associative deformation]\label{def:multi-param-hom-assoc} A \emph{multi-parameter}, or an \emph{$n$-parameter formal hom-associative deformation} of a hom-as\-so\-cia\-tive algebra $(M,\cdot_0,\alpha_0)$ over $R$, is a hom-associative algebra $(M\llbracket t_1,\ldots,t_n\rrbracket,\cdot_t,\alpha_t)$ over $R\llbracket t_1,\ldots, t_n\rrbracket$ where $n\in\mathbb{N}_{>0}$, $t:=(t_1,\ldots,t_n)$, and
\begin{equation*}
	\cdot_t=\sum_{i\in\mathbb{N}^n}\cdot_i t^i,\quad \alpha_t=\sum_{i\in\mathbb{N}^n}\alpha_i t^i.
\end{equation*}
Here, $i:=(i_1,\ldots,i_n)\in\mathbb{N}^n$ and $t^i:=t_1^{i_1}\cdots t_n^{i_n}$. Moreover, $\cdot_i\colon M\times M\to M$ is an $R$-bilinear map, extended homogeneously to an $R\llbracket t_1,\ldots,t_n\rrbracket$-bilinear map $\cdot_i\colon M\llbracket t_1,\ldots,t_n \rrbracket\times M\llbracket t_1,\ldots,t_n\rrbracket\to M\llbracket t_1,\ldots,t_n \rrbracket$. Similarly, $\alpha_i\colon M\to M$ is an $R$-linear map, extended homogeneously to an $R\llbracket t_1,\ldots,t_n\rrbracket$-linear map denoted by $\alpha_i\colon M\llbracket t_1,\ldots,t_n\rrbracket\to M\llbracket t_1,\ldots,t_n\rrbracket$.
\end{definition}

\begin{proposition}\label{prop:hom-assoc-deform}
$A_1^k$ is a multi-parameter formal deformation of $A_1$.	
\end{proposition}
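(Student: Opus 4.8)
The plan is to realize $A_1^k$ as the specialization of a ``formal Yau twist'' in which the coefficients $k_0,k_p,k_{2p},\ldots$ of the twisting map are replaced by formal parameters. Writing $k=(k_0,k_p,\ldots,k_{Mp},0,0,\ldots)$ with $k_{Mp}\neq0$ (the case $k=0$ being trivial, since $A_1^0=A_1$), I would introduce $n:=M+1$ indeterminates $t_0,t_1,\ldots,t_M$, put $t:=(t_0,\ldots,t_M)$, and work over the base ring $K\llbracket t\rrbracket$. On the Ore extension $A_1\llbracket t\rrbracket=K\llbracket t\rrbracket[y][x;\mathrm{id},\mathrm{d}/\mathrm{d}y]$ I would define $\alpha_t$ to be the homogeneous extension of the $K\llbracket t\rrbracket$-algebra endomorphism of the base $K\llbracket t\rrbracket[y]$ determined by $\alpha_t(x)=x$ and $\alpha_t(y)=t_0+y+t_1y^p+t_2y^{2p}+\cdots+t_My^{Mp}$, and then set $\cdot_t:=\alpha_t\circ\cdot$, i.e. $a\cdot_t b:=\alpha_t(a\cdot b)$.

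First I would check that $\alpha_t$ is a well-defined endomorphism commuting with $\mathrm{d}/\mathrm{d}y$. This is the formal analogue of \autoref{lem:alpha-sufficient-necessary}, and it is exactly here that prime characteristic is used: since $\mathrm{d}/\mathrm{d}y(y^{jp})=jp\,y^{jp-1}=0$ for every $j$, one gets $\mathrm{d}/\mathrm{d}y(\alpha_t(y))=1=\alpha_t(\mathrm{d}/\mathrm{d}y(y))$, and the very same induction over monomials as in the proof of \autoref{lem:alpha-sufficient-necessary} (now with scalars enlarged from $K$ to $K\llbracket t\rrbracket$) extends this to all of $A_1\llbracket t\rrbracket$. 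With $\alpha_t$ in hand, \autoref{prop:hom*ore}, applied over $K\llbracket t\rrbracket$ with base ring $K\llbracket t\rrbracket[y]$, immediately yields that $(A_1\llbracket t\rrbracket,\cdot_t,\alpha_t)$ is a multiplicative, hom-associative Ore extension. Thus the Yau twist machinery discharges all of the hom-associativity bookkeeping for free; no associativity-type identity needs to be verified by hand.

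Next I would expand everything in powers of $t$ to match \autoref{def:multi-param-hom-assoc}. Because $\alpha_t(y)$ is a polynomial in $t$ whose value at $t=0$ is $y$, while $\alpha_t(x)=x$ is $t$-independent, expanding $\alpha_t(y^mx^n)=\alpha_t(y)^mx^n$ by the binomial theorem gives $\alpha_t=\sum_{i\in\mathbb{N}^n}\alpha_it^i$ with each $\alpha_i$ an $R$-linear coefficient map and $\alpha_0=\mathrm{id}_{A_1}$. Consequently $\cdot_t=\alpha_t\circ\cdot=\sum_{i\in\mathbb{N}^n}(\alpha_i\circ\cdot)\,t^i$, so setting $\cdot_i:=\alpha_i\circ\cdot$ produces $R$-bilinear coefficient maps with $\cdot_0=\alpha_0\circ\cdot=\cdot$, the associative product of $A_1$. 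Hence the zeroth-order term of $(A_1\llbracket t\rrbracket,\cdot_t,\alpha_t)$ is precisely $(A_1,\cdot,\mathrm{id}_{A_1})=A_1$, and all the requirements of \autoref{def:multi-param-hom-assoc} are satisfied. Finally, specializing $t_j\mapsto k_{jp}$ turns $\alpha_t$ into $\alpha_k$ and $\cdot_t$ into the product $*$ of $A_1^k$, which is the sense in which $A_1^k$ is this deformation.

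I expect the only genuine obstacle to be verifying that $\alpha_t$ is multiplicative, equivalently that it commutes with $\mathrm{d}/\mathrm{d}y$, equivalently that it respects the relation $[x,y]=1_{A_1}$, in the formal setting. But since this is a verbatim reprise of \autoref{lem:alpha-sufficient-necessary}, and the characteristic-$p$ vanishing $\mathrm{d}/\mathrm{d}y(y^{jp})=0$ is exactly what makes the higher terms $t_jy^{jp}$ admissible, the argument is routine, and everything downstream is then handled automatically by \autoref{prop:hom*ore}.
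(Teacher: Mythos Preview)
Your proposal is correct and follows essentially the same route as the paper: replace the entries of $k$ by formal indeterminates, form the Yau twist, expand in the indeterminates to extract the coefficient maps $\alpha_i$ and $\cdot_i$, and observe that the zeroth term is $(A_1,\cdot,\mathrm{id}_{A_1})$; the only cosmetic difference is that you appeal to \autoref{lem:alpha-sufficient-necessary} and \autoref{prop:hom*ore} (the Ore-extension packaging) where the paper invokes \autoref{prop:star-alpha-mult} directly and writes out the multinomial expansion of $\alpha_t(y^mx^n)$. One small caveat: the identification $A_1\llbracket t\rrbracket=K\llbracket t\rrbracket[y][x;\mathrm{id},\mathrm{d}/\mathrm{d}y]$ is not literally correct---the right-hand side consists of elements of bounded $x,y$-degree, whereas $A_1\llbracket t\rrbracket$ does not---but since the coefficient maps $\alpha_i,\cdot_i$ you extract are well defined on $A_1$ and then extended homogeneously as in \autoref{def:multi-param-hom-assoc}, hom-associativity passes from the Ore extension to all of $A_1\llbracket t\rrbracket$ without difficulty.
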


\begin{proof}
Let $k=(k_0,k_p,k_{2p},\ldots,k_{Mp},0,0,0,\ldots)$ for some $M\in\mathbb{N}$ and put, for any $i\in\mathbb{N}$, $k_i=0$ unless $i=0,p,2p,\ldots,Mp$, or $i=1$, in which case $k_1=1_{K}$. By using the multinomial theorem, for an arbitrary monomial $y^mx^n$ where $m,n\in\mathbb{N}$,
\begin{align*}
&\alpha_k(y^mx^n)=(k_0+k_1y+k_2y^2+\cdots +k_{Mp}y^{Mp})^mx^n\\
&=\sum_{i_0+i_1+i_2+\cdots i_{Mp}=m}\frac{m!}{i_0!i_1!i_2!\cdots i_{Mp}!}\left(\prod_{j=0}^{Mp}(k_jy^j)^{i_j}\right) x^n\\
&=\sum_{i_0+i_1+i_2+\cdots i_{Mp}=m} \frac{m!}{i_0!i_1!i_2!\cdots i_{Mp}!}\left(\prod_{j=0}^{Mp}(y^j)^{i_j}\right)x^n \prod_{j=0}^{Mp} k_j^{i_j}\\
&= \sum_{i_0+i_1+i_2+\cdots i_{Mp}=m }  \frac{m!}{i_0!i_1!i_2!\cdots i_{Mp}!} \left(\prod_{j=0}^{Mp} y^{ji_j}\right) x^n \left( k_0^{i_0} k_1^{i_1}k_2^{i_2}\cdots k_{Mp}^{i_{Mp}} \right) \\
&= \sum_{i_0+i_p+i_{2p}+\cdots + i_{Mp} \leq m} \frac{m!}{i_0!i_{p}!i_{2p}!\cdots i_{Mp}! \cdot (m-i_0-i_p-i_{2p}-\ldots -i_{Mp})!}  \\
&\phantom{=}\; \cdot\left(\prod_{j=1}^{M} y^{pji_{jp}}\right)x^n \left( k_0^{i_0}k_p^{i_p}k_{2p}^{i_2p} \cdots k_{Mp}^{i_{Mp}} \right) \\
%&= \sum_{(i_0,i_p,i_{2p}, \ldots ,i_{Mp}) \in \mathbb{N}^{M+1}} \binom{m}{i_0,i_p, \ldots i_{Mp}, (m-i_0-i_p-i_{2p}-\ldots i_{Mp})} \left(\prod_{j=1}^{M} y^{pji_{jp}}\right) x^n \left( k_0^{i_0}k_p^{i_p}k_{2p}^{i_2p} \cdots k_{Mp}^{i_{Mp}} \right).
\end{align*}

Now define $t_1:=k_0$, $t_2:=k_p$, $t_3:=k_{2p},\ldots,t_{M+1}:=k_{Mp}$ and regard $t_1,\ldots,t_{M+1}$ as indeterminates of the formal power series $K\llbracket t_1,\ldots,t_{M+1}\rrbracket$ and $A_1\llbracket t_1,\ldots,t_{M+1}\rrbracket$. Then, by the above calculation, $\alpha_t$ is a formal power series in $t_1,\ldots,t_{M+1}$ where $t:=(t_1,\ldots,t_{M+1})$. For any specific element $q\in A_1$, $\alpha_t(q)$ will be a polynomial. Moreover, $\alpha_0=\mathrm{id}_{A_1}$. Next, we extend $\alpha_t$ linearly over $K\llbracket t_1,\ldots,t_{M+1}\rrbracket$ and homogeneously to all of $A_1\llbracket t_1,\ldots,t_{M+1}\rrbracket$. To define the multiplication $\cdot_t$ in $A_1\llbracket t_1,\ldots,t_{M+1}\rrbracket$, we first extend the multiplication $\cdot_0$ in $A_1$ homogeneously to a binary operation $\cdot_0\colon A_1\llbracket t_1,\ldots,t_n\rrbracket \times A_1\llbracket t_1,\ldots,t_n\rrbracket \to A_1\llbracket t_1,\ldots,t_n\rrbracket$ linear over $K\llbracket t_1,\ldots, t_n\rrbracket$ in both arguments. Then we compose $\alpha_t$ with $\cdot_0$, so that $\cdot_t:=\alpha_t\circ\cdot_0$. This is again a formal power series in $t_1,\ldots,t_{M+1}$, and hom-associativity follows from \autoref{prop:star-alpha-mult}. Hence we have a formal deformation $(A_1\llbracket t_1,\ldots,t_{M+1}\rrbracket, \cdot_t,\alpha_t)$ of $(A_1,\cdot_0,\alpha_0)$, where the latter is $A_1$ in the language of hom-associative algebras. 
\end{proof}

\begin{definition}[Multi-parameter formal hom-Lie deformation]\label{def:multi-param-hom-Lie} A \emph{multi-parameter}, or an \emph{$n$-parameter formal hom-Lie deformation} of a hom-Lie algebra $(M,[\cdot,\cdot]_0,\alpha_0)$ over $R$, is a hom-Lie algebra $(M\llbracket t_1,\ldots,t_n\rrbracket, [\cdot,\cdot]_t,\alpha_t)$ over $R\llbracket t_1,\ldots,t_n\rrbracket$ where $n\in\mathbb{N}_{>0}$, $t:=(t_1,\ldots,t_n)$, and 
\begin{equation*}
[\cdot,\cdot]_t=\sum_{i\in\mathbb{N}^n}[\cdot,\cdot]_it^i,\quad \alpha_t=\sum_{i\in\mathbb{N}^n}\alpha_i t^i.
\end{equation*}
Here, $i:=(i_1,\ldots,i_n)\in\mathbb{N}^n$ and $t^i:=t_1^{i_1}\cdots t_n^{i_n}$. Moreover,  $[\cdot,\cdot]_i\colon M\times M\to M$ is an $R$-bilinear map, extended homogeneously to an $R\llbracket t_1,\ldots,t_n\rrbracket$-bilinear map $[\cdot,\cdot]_i\colon M\llbracket t_1,\ldots,t_n \rrbracket\times M\llbracket t_1,\ldots,t_n\rrbracket\to M\llbracket t_1,\ldots,t_n \rrbracket$. Similarly, $\alpha_i\colon M\to M$ is an $R$-linear map, extended homogeneously to an $R\llbracket t_1,\ldots,t_n\rrbracket$-linear map denoted by $\alpha_i\colon M\llbracket t_1,\ldots,t_n\rrbracket\to M\llbracket t_1,\ldots,t_n\rrbracket$.
\end{definition}

\begin{proposition}\label{prop:hom-lie-deform}
The deformation of $A_1$ into $A_1^k$ induces a multi-parameter formal hom-Lie deformation of the Lie algebra of $A_1$ into the hom-Lie algebra of $A_1^k$, when using the commutator as bracket.	
\end{proposition}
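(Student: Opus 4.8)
The plan is to leverage \autoref{prop:hom-assoc-deform} together with \autoref{prop:hom-assoc-commutator}, which tells us that equipping any hom-associative algebra with the commutator yields a hom-Lie algebra. Since we already know from \autoref{prop:hom-assoc-deform} that $(A_1\llbracket t_1,\ldots,t_{M+1}\rrbracket,\cdot_t,\alpha_t)$ is a hom-associative algebra over $K\llbracket t_1,\ldots,t_{M+1}\rrbracket$ deforming $(A_1,\cdot_0,\alpha_0)$, the natural strategy is to define the bracket $[\cdot,\cdot]_t$ as the commutator associated with $\cdot_t$ and then verify that it fits the shape demanded by \autoref{def:multi-param-hom-Lie}.

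First I would set $[\cdot,\cdot]_t := \cdot_t - \cdot_t^{\mathrm{op}}$, i.e. $[a,b]_t := a\cdot_t b - b\cdot_t a$, keeping the same twisting map $\alpha_t$ from \autoref{prop:hom-assoc-deform}. By \autoref{prop:hom-assoc-commutator} applied to the hom-associative algebra $(A_1\llbracket t_1,\ldots,t_{M+1}\rrbracket,\cdot_t,\alpha_t)$, the triple $(A_1\llbracket t_1,\ldots,t_{M+1}\rrbracket,[\cdot,\cdot]_t,\alpha_t)$ is automatically a hom-Lie algebra over $K\llbracket t_1,\ldots,t_{M+1}\rrbracket$; this disposes of the hom-Jacobi identity and alternativity for free. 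The remaining task is purely formal: to check that this bracket has the power-series expansion $[\cdot,\cdot]_t=\sum_{i\in\mathbb{N}^{M+1}}[\cdot,\cdot]_i t^i$ required by the definition. Since $\cdot_t=\sum_i \cdot_i t^i$ is already such an expansion with $K$-bilinear coefficients $\cdot_i$ extended homogeneously, I would simply set $[\cdot,\cdot]_i(a,b):=a\cdot_i b-b\cdot_i a$; these are $K$-bilinear maps, extended homogeneously, and collecting terms gives $[\cdot,\cdot]_t=\sum_i [\cdot,\cdot]_i t^i$. The twisting map expansion $\alpha_t=\sum_i\alpha_i t^i$ is inherited verbatim from \autoref{prop:hom-assoc-deform}.

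Finally I would record that the degree-zero part recovers the correct classical object: since $\cdot_0$ is the associative multiplication of $A_1$ and $\alpha_0=\mathrm{id}_{A_1}$, the bracket $[\cdot,\cdot]_0$ is exactly the ordinary commutator on $A_1$, so $(A_1,[\cdot,\cdot]_0,\mathrm{id}_{A_1})$ is the Lie algebra of $A_1$ in the language of hom-Lie algebras. Thus $(A_1\llbracket t_1,\ldots,t_{M+1}\rrbracket,[\cdot,\cdot]_t,\alpha_t)$ is a genuine multi-parameter formal hom-Lie deformation of it, and this deformation is induced by the hom-associative one of \autoref{prop:hom-assoc-deform}, as claimed.

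I do not anticipate a serious obstacle here: the heavy lifting was done in \autoref{prop:hom-assoc-deform} and \autoref{prop:hom-assoc-commutator}, and the argument is essentially bookkeeping. The only point requiring a little care is confirming that taking commutators commutes with the formal power-series expansion, i.e. that the coefficient maps $[\cdot,\cdot]_i$ are well defined and their homogeneous extensions reassemble into $[\cdot,\cdot]_t$; this is routine but should be stated explicitly to match \autoref{def:multi-param-hom-Lie}.
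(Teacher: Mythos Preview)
Your proposal is correct and follows essentially the same approach as the paper: invoke \autoref{prop:hom-assoc-commutator} on the hom-associative deformation from \autoref{prop:hom-assoc-deform}, then verify that the resulting bracket has the required power-series form and reduces to the ordinary commutator at $t=0$. The only cosmetic difference is that the paper obtains the series expansion in one stroke via the identity $[\cdot,\cdot]_t=\alpha_t\circ[\cdot,\cdot]_0$ (which follows from $\cdot_t=\alpha_t\circ\cdot_0$), whereas you expand $\cdot_t$ term by term and set $[\cdot,\cdot]_i:=\cdot_i-\cdot_i^{\mathrm{op}}$; both are equally valid bookkeeping.
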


\begin{proof}
Let $k=(k_0,k_p,k_{2p},\ldots,k_{Mp},0,0,0,\ldots)$. Then, by using the deformation $(A_1\llbracket t_1,\ldots,t_{M+1}\rrbracket, \cdot_t,\alpha_t)$ of $(A_1,\cdot_0,\alpha_0)$ in \autoref{prop:hom-assoc-deform}, we construct a hom-Lie algebra $(A_1\llbracket t_1,\ldots,t_{M+1}\rrbracket, [\cdot,\cdot]_t,\alpha_t)$ by using the commutator $[\cdot,\cdot]_t$ of the hom-associative algebra $(A_1\llbracket t_1,\ldots,t_{M+1}\rrbracket, \cdot_t,\alpha_t)$ as bracket. Indeed, by \autoref{prop:hom-assoc-commutator} this gives a hom-Lie algebra. We claim that this is also a formal hom-Lie deformation of the Lie algebra $(A_1,[\cdot,\cdot]_0,\alpha_0)$ where $[\cdot,\cdot]_0$ is the commutator in $A_1$, and $\alpha_0=\mathrm{id}_{A_1}$. Since $\alpha_t$ is the same map as in \autoref{prop:hom-assoc-deform}, we only need to verify that $[\cdot,\cdot]_t$ is a formal power series in $t_1,\ldots,t_n$, which when evaluated at $t=0$ gives the commutator in $A_1$. But this is immediate since $[\cdot,\cdot]_t=\alpha_t\circ[\cdot,\cdot]_0$.
\end{proof}

\section*{Acknowledgements}
P. Bäck would like to thank S.~A.~Lopes for the suggestion of studying the hom-associative Weyl algebras in prime characteristic.

\end{document}